% date \today==> ??
% \cmt \ber\er
%\overful

\documentclass[11pt]{amsart}
\usepackage{amsmath, amssymb, latexsym}
\usepackage{mathrsfs}
\usepackage{enumerate}
\usepackage[all, knot]{xy}
\usepackage{color}

\setlength{\textwidth}{16cm} \setlength{\textheight}{21cm}
\setlength{\oddsidemargin}{0.0cm} \setlength{\evensidemargin}{0.0cm}

\newcommand{\nc}{\newcommand}
\newtheorem{Thm}{Theorem}[section]
\newtheorem{Prop}[Thm]{Proposition}
\newtheorem{Cor}[Thm]{Corollary}
\newtheorem{Lem}[Thm]{Lemma}

\theoremstyle{definition}
\newtheorem{defn}[Thm]{Definition}

\theoremstyle{remark}

% For main results in introduction

%%%%%%%%%%%%%%%%%%%%%%%%%%%%%% error check code

\newenvironment{red}
{\relax\color{red}}
{\hspace*{.5ex}\relax}

\newcommand{\ber}{\begin{red}}
\newcommand{\er}{\end{red}}

\newenvironment{verd}
{\relax\color{magenta}}
{\hspace*{.5ex}\relax}

\newcommand{\bg}{\begin{verd}}
\newcommand{\eg}{\end{verd}}

%%%%%%%%%%%%%%%%%%%%%%%%%%%%%%%%%%%%%%%%%%%%%%%%

\numberwithin{equation}{subsection}
%\numberwithin{equation}{section}

\newcommand{\Z}{\mathbb{Z}}
\newcommand{\Q}{\mathbb{Q}}

\newcommand{\A}{\mathbb{A}}

\newcommand{\g}{\mathfrak{g}}

\newcommand{\Hom}{\mathrm{Hom}}

\newcommand{\wt}{{\rm wt}}

\newcommand{\proj}{\mathrm{Proj}}
\newcommand{\fmod}{\mathrm{Mod}}

% Lie algs

   % index set
\newcommand{\rlQ}{\mathsf{Q}}   % root lattice
\newcommand{\wlP}{\mathsf{P}}   % weight lattice
  % Weyl group
\newcommand{\cmA}{\mathsf{A}}  % Cartan matrix
\newcommand{\tyX}{\mathsf{X}}  % type
  % index of half thickness blocks

% shifted Young diagrams and Fock space

\newcommand{\ST}{\mathsf{ST}}   % standard tablueax
\newcommand{\res}{\mathrm{res}}   % residue sequance
\newcommand{\sg}{\mathfrak{S}}   % symmetric group
   % Fock space
   % standard tableaux

% others
   % irreducible module
\newcommand{\fqh}{R^{\Lambda_0}}   % finite quiver Hecke alg
 %Bruhat graph
%\newcommand{\qd}{\mathrm{deg}_q} %q-degree
%\newcommand{\qcd}{\mathrm{codeg}_q} %q-codegree
%\newcommand{\qr}{\mathrm{r}} %q- ratio
\newcommand{\df}{\mathsf{def}} % defect %sejin \mathrm->\mathsf
\newcommand{\bR}{\mathbf{k}} % base ring
\newcommand{\fqH}{R^{\Lambda_0}} % finite quiver Hecke alg
 % number on (i,j) node
\newcommand{\sh}{\mathsf{sh}} % shape
\newcommand{\dg}{\mathsf{deg}} % degree
\newcommand{\cdg}{\mathsf{codeg}} % codegree

\newcommand{\qE}{\mathfrak{E}}
\newcommand{\qF}{\mathfrak{F}}
\newcommand{\qm}{\mathfrak{m}}
\newcommand{\hst}{\varpi}

\nc{\groundwall}
{\xy
(0,-0.5)*{};(33,-0.5)*{} **\dir{.};
(0,-1)*{};(33,-1)*{} **\dir{.};
(0,-1.5)*{};(33,-1.5)*{} **\dir{.};
(0,-2)*{};(33,-2)*{} **\dir{.};
(0,-2.5)*{};(33,-2.5)*{} **\dir{.};
(0,-3)*{};(33,-3)*{} **\dir{-};
(0,0)*{};(33,0)*{} **\dir{-};
(33,-3)*{};(33,0)*{} **\dir{-};
(27,-3)*{};(27,0)*{} **\dir{-};
(21,-3)*{};(21,0)*{} **\dir{-};
(15,-3)*{};(15,0)*{} **\dir{-};
(9,-3)*{};(9,0)*{} **\dir{-};
(3,-3)*{};(3,0)*{} **\dir{-};
(30.2,-1.5)*{_0}; (24.2,-1.5)*{_0}; (18.2,-1.5)*{_0};
(12.2,-1.5)*{_0}; (6.2,-1.5)*{_0};
\endxy}

\begin{document}

\title[Young walls and graded dimension formulas]
{Young walls and graded dimension formulas for finite quiver Hecke
algebras of type $A^{(2)}_{2\ell}$ and $D^{(2)}_{\ell+1}$}

\author[Se-jin Oh]{Se-jin Oh$^{1}$}
%\thanks{$^1$ This work is supported in part by JSPS, Grant-in-Aid for Scientific Research (B) 23340006.}
%\address{Pohang Mathematics Institute, Pohang University of Science and Technology, San 31 Hyoja-dong Nam-gu, Pohang 790-984, Korea}
\thanks{$^1$ This work was supported by BK21 PLUS SNU Mathematical Sciences Division} %   for Se-jin Oh.
\address{Department of Mathematical Sciences, Seoul National University GwanakRo 1, Gwanak-Gu, Seoul 151-747, Korea}
\email{sejin092@gmail.com}

\author[Euiyong Park]{Euiyong Park$^{2}$}
\thanks{$^2$ This work was supported by the 2013 Research Fund of the University of Seoul.} % for Euiyong Park.
\address{Department of Mathematics, University of Seoul, Seoul 130-743, Korea}
\email{epark@uos.ac.kr}

%\date{\today}
%\subjclass[2010]{14F05, 17B67, 81R10}
%\keywords{}

\begin{abstract}
We study graded dimension formulas for finite quiver Hecke algebras
$R^{\Lambda_0}(\beta)$ of type $A^{(2)}_{2\ell}$ and
$D^{(2)}_{\ell+1}$ using combinatorics of Young walls. We introduce
the notion of standard tableaux for proper Young walls and show that
the standard tableaux form a graded poset with lattice structure. We
next investigate Laurent polynomials associated with proper Young walls and
their standard tableaux arising from the Fock space representations
consisting of proper Young walls. Then we prove the graded dimension
formulas described in terms of the Laurent polynomials. When
evaluating at $q=1$, the graded dimension formulas recover the
dimension formulas for $R^{\Lambda_0}(\beta)$ described in terms of
standard tableaux of strict partitions.
\end{abstract}

\maketitle

%\tableofcontents

\vskip 1em

\section*{Introduction}

The quiver Hecke algebras $R=\bigoplus_{\beta \in
\rlQ^{+}}R(\beta)$, introduced by Khovanov-Lauda \cite{KL09, KL11}
and Rouquier \cite{R08}, provide categorifications of the negative
parts of quantum Kac-Moody algebras. They have $\Z$-grading and can
be defined for all symmetrizable Cartan datums. Moreover, their
certain graded quotients $R^\Lambda=\bigoplus_{\beta \in
\rlQ^{+}}R^\Lambda(\beta)$, called cyclotomic quiver Hecke algebras,
categorify integrable highest weight modules $V(\Lambda)$ over
quantum Kac-Moody algebras \cite{KK11}. When $\Lambda = \Lambda_0$,
we call the algebras $\fqH(\beta)$ \emph{finite quiver Hecke
algebras}. The results in \cite{BK09,R08} tell that the block
algebras of the cyclotomic Hecke algebras are isomorphic to
cyclotomic quiver Hecke algebras of type $A_{\ell}^{(1)}$.
In particular, when $\Lambda = \Lambda_0$,
the finite quiver Hecke algebra of type $A_\ell^{(1)}$ is isomorphic to the Iwahori-Hecke algebra associated with the symmetric group.
In this sense, finite quiver Hecke algebras can be understood as vast
generalizations of Iwahori-Hecke algebras. Via
the isomorphism in \cite{BK09,R08}, well-established theories on
cyclotomic Hecke algebras can be applicable to $R^\Lambda$ of type
$A^{(1)}_\ell$ with additional grading structures
\cite{BK09A,BKW11,HM10}. However, only a few results about the
algebraic structure of $R^\Lambda$ for affine types except
$A^{(1)}_\ell$ were known \cite{AP12,AP13}. For example, there exist
no known homogeneous bases for $R^\Lambda$ of affine types except
type $A^{(1)}_\ell$.

%A weakly decreasing sequence $\lambda=(\lambda_0,\lambda_1,\ldots)$ is called a {\it partition}.
%For a given partition, we can associate a Young diagram. Thus, we can identify partitions and Young diagrams.

The study of partitions (or Young diagrams) is one of the most
important research areas in combinatorial representation theory.
Combinatorics of partitions are deeply related to representations of
Lie algebras and symmetric groups. Among proper subsets of
partitions, strict partitions (or shifted Young diagrams) have its
own meaning in combinatorial representation theory.
In particular,
they realize the Fock space representations of Kac-Moody algebras of
types $A_{2\ell}^{(2)}$ and $D_{\ell+1}^{(2)}$ \cite{DJKM82,JM83}.
In \cite{Kang03}, Kang introduced new combinatorial objects, called {\it
Young walls}, to give combinatorial realizations of level $1$
crystals of various quantum affine Kac-Moody algebras,
which are deeply related to the theory of perfect crystals \cite{KKMMNN91, KKMMNN92}.
In \cite{KMPY96}, Kashiwara, Miwa, Pertersen and Yung constructed the Fock space representation of quantum affine algebras using perfect crystals.
In \cite{KangKwon08}, Kang and Kwon proved that the spaces
spanned by {\it proper} Young walls indeed form the Fock space
representations of quantum affine Kac-Moody algebras. In particular, proper Young walls of type
$A^{(2)}_{2\ell}$ (resp.\ $D^{(2)}_{\ell+1}$) can be identified with
$(2\ell+1)$-strict partitions (resp. $(\ell+1)$-strict partitions)
which are defined as follows: For $h \in \Z_{\ge 1} \sqcup \{ \infty
\}$,
\begin{center}
$\lambda=(\lambda_0,\lambda_1,\lambda_2,\ldots)$ is an $h$-strict partition if $\lambda_i=\lambda_{i+1} \ne 0$ implies $h | \lambda_i$.
\end{center}
An $h$-strict partition can be viewed as {\it intermediate} one between a partition and a strict partition since a partition is a $1$-strict partition
and a strict partition is an $\infty$-partition. Moreover, there exist the following inclusions:
$$ \{ \text{strict partitions} \} \subsetneq \cdots \subsetneq \{ \text{$2h$-strict partitions} \} \subsetneq \{ \text{$h$-strict partitions} \}
\subsetneq \{ \text{partitions} \}. $$ Thus, the classical limit of
the Fock space representation in \cite{KangKwon08} consisting of
proper Young walls of type $A^{(2)}_{2\ell}$ and $D^{(2)}_{\ell+1}$
is {\it strictly larger} than the Fock space representation in
\cite{DJKM82,JM83} of consisting of strict partitions. The
differences and connections for type $A_{2\ell}^{(2)}$ were
discussed in \cite[Appendix D]{KMPY96}, \cite[Section 2]{LT97}.
It is explained in \cite[Introduction]{Tsu10} why proper Young walls of types $A_\ell^{(1)}$, $A_{2\ell}^{(2)}$ and $D_{\ell+1}^{(2)}$ may be regarded as partitions in the viewpoint of perfect crystals.
Type $A^{(2)}_{2\ell}$ and $D^{(2)}_{\ell+1}$ also appear in categorification of the Hecke-Clifford superalgebras \cite{BK01, Tsu10} and the quiver Hecke superalgebras \cite{HW12,KKO12}.

This paper aims for computing {\it graded} dimension formulas of $R^{\Lambda_0}(\beta)$ of type  $A^{(2)}_{2\ell}$
and $D^{(2)}_{\ell+1}$ using combinatorics of proper Young walls.
%Our graded dimension formulas are described in terms of Laurent polynomials associated with
%proper Young walls and their {\it standard tableaux}.
%The Laurent polynomials arise naturally from the actions of Chevalley generators $e_i$, $f_i$ on the Fock space representations $\mathcal{F}(\Lambda_0)$
%consisting of proper Young walls. Our graded dimension formulas are graded versions of the dimension formulas given in \cite{AP12, AP13} described in terms
%of standard tableaux on strict partitions.
To achieve the goal, we introduce and investigate new combinatorial
objects, {\it standard tableaux of proper Young walls}. We define a
standard tableau $T$ of a proper Young wall $Y$ as a sequences of
certain proper Young walls, and also give an equivalent definition
for standard tableaux by  considering its entries (Proposition
\ref{Prop: standard}). As $h$-strict partitions do, standard
tableaux of a proper Young wall can be understood as intermediate
one between standard tableaux of a partition and those of a strict
partition.
% (See Section \ref{Subsec: standard tableaux}).
As a combinatorial object itself, the set $\ST(Y)$ of standard tableaux on a proper Young wall $Y$ has a partial order induced from the weak order of the symmetric group,
which is an analogue of the partial order appeared in \cite[Section 3.3]{BKW11}.
It turns out that $w (\ST(Y) )$ is a {\it generalized quotient} which is introduced in \cite{BW88} for generalizing distinguished coset representatives of parabolic subgroups of Coxeter groups
(Theorem \ref{Thm: weak order}).
Theorem \ref{Thm: weak order} tells that $\ST(Y)$ is a graded poset with lattice structure having a unique minimum and a unique maximum and that, for $T, T' \in \ST(Y)$ with $T<T'$,
there exist simple transpositions $ s_{i_1}, s_{i_2}, \ldots, s_{i_t}$ such that,
$$ T' = s_{i_t} \cdots s_{i_1} T, \quad s_{i_j} \cdots s_{i_1} T \in \ST(Y) \text{ for all $1\le j \le t$}, $$
where $w T$ is the tableau obtained from $T$ by replacing entry $k$ with $w(k)$ for $k=1,\ldots,n$.
% \cite{Br05,RW05}.

Standard tableaux for proper Young walls naturally arise from the
actions $f_{i_1} \cdots f_{i_n}$ and $e_{j_1} \cdots e_{j_n}$ on the
Fock space representations $\mathcal{F}(\Lambda_0)$ consisting of
proper Young walls. Observing the action of Chevalley generators
$f_i$ and $e_i$ on $\mathcal{F}(\Lambda_0)$, we get the Laurent
polynomials $\qE_q(T)$ and $\qF_q(T)$ associated with standard
tableaux $T$ of proper Young walls $Y$. They may be viewed as
$A_{2\ell}^{(2)}$-type and $D_{\ell+1}^{(2)}$-type analogues of
$q^{deg(T)}$ and $q^{-codeg(T)}$ which appear in graded dimension
formulas for finite quiver Hecke algebra of type $A_\ell^{(1)}$.
Here, ${deg(T)}$ and ${codeg(T)}$ are degree and codegree for
standard tableaux appeared in \cite[Section 3.5]{BKW11}.
Interestingly, the ratio of $\qE_q(T)$ and $\qF_q(T)$ becomes a
Laurent polynomial again, which depends only on $Y$ (Corollary
\ref{Cor: ratio E and F}). For any pair of standard tableaux
$(T,T')$ of $Y$ with $T<T'$ and $s_i T = T'$, we compute
$\qE_q(T')/\qE_q(T)$ and $\qF_q(T')/\qF_q(T)$ explicitly and show
that they are same (Proposition \ref{Prop: ratio for E, F}).
Moreover, Proposition \ref{Prop: m(Y)} tells that $\qm_q(T):=
\qE_q(T)/\qE_q(T^Y)=\qF_q(T)/\qF_q(T^Y)$ is a Laurent polynomial,
where $T^Y$ is the canonical tableau defined in $\eqref{Eq:
canonical tableau}$.

Our graded dimension formulas for $\fqh(\beta)$ are described in
terms of the Laurent polynomials associated with standard tableaux
for proper Young walls. Considering the Fock space representations
$\mathcal{F}(\Lambda_0)$ with the categorification theorem, we prove
the graded dimension formulas for $\fqh(\beta)$ (Theorem \ref{Thm: q-dim
formulas}). Our graded dimension formulas are graded versions of the
dimension formulas given in \cite{AP12, AP13} described in terms of
standard tableaux on strict partitions. When evaluating at $q=1$,
the dimension of $R^{\Lambda_0}(\beta)$ can be computed from a
proper subset $\ST_\infty(Y)$ of the standard tableaux $\ST(Y)$
which corresponds bijectively to the set of standard tableaux for
strict partitions (Theorem \ref{Thm: dim formulas}). Thus our graded
dimension formulas recover the dimension formulas in
\cite{AP12,AP13} described in terms of standard tableaux of strict
partitions.

\vskip 1em

\section{Fock space representations} \label{Sec: Sec Fock space}

%In this section, we review briefly the Fock space representations of quantum affine algebras $U_q(A_{2\ell}^{(2)})$ and $U_q(D_{\ell+1}^{(2)})$ consisting of {\it Young walls} \cite{KangKwon08}.
%Young walls are a generalization of colored Young diagrams. They were introduced for realizing level 1 crystals of various quantum affine algebras \cite{HK02,Kang03}.
%The Fock space representations is a main ingredient for proving our graded dimension formulas for finite quiver Hecke algebras.
%he graded dimension formulas will be described in Section \ref{Sec: dimension formula} in terms of the Laurent polynomials arising from the Fock space representations.

\subsection{Quantum affine algebras} \label{Sec: quantum affine algs}
Let $I = \{0,1, \ldots, \ell \} \ (\ell \ge 2) $ be an index set. The affine Cartan matrices of type $A_{2\ell}^{(2)}$ and $ D_{\ell+1}^{(2)}$ are given as follow:
$$ \tiny
 \cmA (A_{2\ell}^{(2)}) =  \left(
                                  \begin{array}{ccccccc}
                                    2  & -2 & 0  & \ldots & 0  & 0  & 0 \\
                                    -1 &  2 & -1 & \ldots & 0  & 0  & 0 \\
                                    0  & -1 & 2  & \ldots & 0  & 0  & 0 \\
                                    \vdots   &  \vdots  &  \vdots  & \ddots &  \vdots  &  \vdots  & \vdots \\
                                    0  & 0  & 0  & \ldots & 2  & -1 & 0 \\
                                    0  & 0  & 0  & \ldots & -1 & 2  & -2 \\
                                    0  & 0  & 0  & \ldots & 0  & -1 & 2 \\
                                  \end{array}
                                \right),
\quad \cmA(D_{\ell+1}^{(2)}) = \left(
                                  \begin{array}{ccccccc}
                                    2  & -2 & 0  & \ldots & 0  & 0  & 0 \\
                                    -1 &  2 & -1 & \ldots & 0  & 0  & 0 \\
                                    0  & -1 & 2  & \ldots & 0  & 0  & 0 \\
                                    \vdots   &  \vdots  &  \vdots  & \ddots &  \vdots  &  \vdots  & \vdots \\
                                    0  & 0  & 0  & \ldots & 2  & -1 & 0 \\
                                    0  & 0  & 0  & \ldots & -1 & 2  & -1 \\
                                    0  & 0  & 0  & \ldots & 0  & -2 & 2 \\
                                  \end{array}
                                \right).
  $$
Throughout this paper, we set $\tyX = A_{2\ell}^{(2)}$ or $ D_{\ell+1}^{(2)}$, and let $\cmA (\tyX)=(a_{ij})_{i,j\in I}$ denote the affine Cartan matrix of type $\tyX$.

A realization of the affine Cartan matrix $\cmA (\tyX)$ is given as follows:
\begin{itemize}
\item[(1)] $\wlP$ is a free abelian group of rank $\ell+2$, called the {\it weight lattice},
\item[(2)] $\Pi = \{ \alpha_i \mid i\in I \} \subset \wlP$, called the set of {\it simple roots},
\item[(3)] $\Pi^{\vee} = \{ h_i \mid i\in I\} \subset \wlP^{\vee} := \Hom( \wlP, \Z )$, called the set of {\it simple coroots},
\end{itemize}
which satisfy
\begin{itemize}
\item[(a)] $\langle h_i, \alpha_j \rangle  = a_{ij}$ for all $i,j\in I$,
\item[(b)] $\Pi$ and $\Pi^{\vee}$ are linearly independent sets.
\end{itemize}
% We set $\alpha_j (h_i) = \langle h_i, \alpha_j \rangle$ for $i,j\in I$.
There is a symmetric bilinear pairing $(\ \mid\ )$ on $\wlP$ satisfying
$$ ( \alpha_i \mid \Lambda ) = \mathsf{d}_i \langle h_i , \Lambda \rangle \ \text{ for all } \Lambda \in \wlP,$$
where
\begin{align} \label{Eq: diagonal matrix}
(\mathsf{d}_0,\mathsf{d}_1, \ldots, \mathsf{d}_\ell) = \left\{
                                                            \begin{array}{ll}
                                                              (1,2, \ldots, 2,4) & \hbox{ if } \tyX = A_{2\ell}^{(2)}, \\
                                                              (1,2, \ldots,2,1) & \hbox{ if } \tyX = D_{\ell+1}^{(2)}.
                                                            \end{array}
                                                          \right.
\end{align}
We denote by $\wlP^{+} =
\{ \Lambda \in \wlP \mid \langle h_i, \Lambda \rangle \ge 0,\ i\in I
\}$ the set of {\it dominant integral weight}. For $i\in I$, let
$\Lambda_i$ denote the $i$th {\it fundamental weight}, i.e. $
\langle h_j, \Lambda_i \rangle = \delta_{ij}$ for $j\in I$. We call
$\rlQ = \bigoplus_{i \in I} \Z \alpha_i$ the {\it root lattice}, and
$\rlQ^+ = \sum_{i\in I} \Z_{\ge 0} \alpha_i$ the {\it positive cone}
of the root lattice. For $\beta=\sum_{i \in I} k_i \alpha_i \in
\rlQ^+$, the {\it height} of $\beta$ is defined by $|\beta|=\sum_{i
\in I} k_i$.

%\ber We set $\tyx=0$ if $\tyX=A_{2\ell}^{(2)}$ and
%$\tyx=0$ or $\ell$ if $\tyX=D_{\ell+1}^{(2)}$. Note that $d_\tyx=1$.
%\er

Let $q$ be an indeterminate and $m,n \in \Z_{\ge 0}$. For $i\in I$,
let $q_i = q^{ \mathsf{d}_i}$ and
\begin{equation*}
 \begin{aligned}
 \ &[n]_i =\frac{ q^n_{i} - q^{-n}_{i} }{ q_{i} - q^{-1}_{i} },
 \ &[n]_i! = \prod^{n}_{k=1} [k]_i ,
 \ &\left[\begin{matrix}m \\ n\\ \end{matrix} \right]_i=  \frac{ [m]_i! }{[m-n]_i! [n]_i! }.
 \end{aligned}
\end{equation*}

\begin{defn} \label{Def: GKM}
The {\em quantum affine algebra} $U_q(\tyX)$ associated
with $(\mathsf{A}(\tyX),\mathsf{P},\Pi,\Pi^{\vee})$ is the associative
algebra over $\Q(q)$ with $1$ generated by $e_i,f_i$ $(i \in I)$ and
$q^{h}$ $(h \in \mathsf{P}^{\vee})$ satisfying following relations:
\begin{enumerate}
  \item  $q^0=1, q^{h} q^{h'}=q^{h+h'} $ for $ h,h' \in \mathsf{P}^{\vee},$
  \item  $q^{h}e_i q^{-h}= q^{\langle h, \alpha_i \rangle} e_i,
          \ q^{h}f_i q^{-h} = q^{-\langle h, \alpha_i \rangle }f_i$ for $h \in \mathsf{P}^{\vee}, i \in I$,
  \item  $e_if_j - f_je_i =  \delta_{ij} \dfrac{K_i -K^{-1}_i}{q_i- q^{-1}_i }, \ \ \mbox{ where } K_i=q_i^{ h_i},$
  \item  $\displaystyle \sum^{1-a_{ij}}_{k=0} (-1)^k \left[\begin{matrix}1-a_{ij} \\ k\\ \end{matrix} \right]_i e^{1-a_{ij}-k}_i
         e_j e^{k}_i = %0 \quad \text{ for }  i \ne j, $
  %\item $\displaystyle
  \sum^{1-a_{ij}}_{k=0} (-1)^k \left[\begin{matrix}1-a_{ij} \\ k\\ \end{matrix} \right]_i f^{1-a_{ij}-k}_if_j
        f^{k}_i=0 \quad \text{ for }  i \ne j. $
\end{enumerate}
\end{defn}

Let $\A = \Z[q,q^{-1}]$. We denote by $U_\A(\tyX)$ the $\A$-form of $U_q(\tyX)$. For a dominant integral weight $\Lambda \in \mathsf{P}^+$,
let $V(\Lambda)$ be the irreducible highest weight $U_q(\tyX)$-module with highest weight $\Lambda$
and $V_\A(\Lambda)$ be its $\A$-form.
%the $U_{\A}^-(\g)$-submodule of $V(\lambda)$ generated
%by the highest weight vector.

\subsection{Young walls}
%We recall the notion of Young walls of type $\tyX = A_{2\ell}^{(2)}, \ D_{\ell+1}^{(2)}$.
A Young wall of type $\tyX$ is a wall consisting of blocks with unit
width, unit thickness and
\begin{align*}
\xy
(0,1.5)*{};(6,1.5)*{} **\dir{-};
(0,-1.5)*{};(6,-1.5)*{} **\dir{-};
(0,-1.5)*{};(0,1.5)*{} **\dir{-};
(6,-1.5)*{};(6,1.5)*{} **\dir{-};
(25,0)*{ : \text{ half-unit height,}};
(50,4.5)*{};(56,4.5)*{} **\dir{-};
(50,-1.5)*{};(56,-1.5)*{} **\dir{-};
(50,4.5)*{};(50,-1.5)*{} **\dir{-};
(56,4.5)*{};(56,-1.5)*{} **\dir{-};
(72,0)*{ : \text{ unit height.}};
\endxy
\end{align*}
The walls are built on the ground-state wall
${}^{ \  }\xy (0,0.2)*++{\groundwall} \endxy $
by the following rules:
\begin{enumerate}
\item Blocks should be built in the pattern given below.
%\item No block can be placed on top of a column of half-unit thickness.
\item There should be no free space to the right of any block except the rightmost column.
\end{enumerate}
%The patterns are given as follows:
$$
\xy
(0,-0.5)*{};(33,-0.5)*{} **\dir{.};
(0,-1)*{};(33,-1)*{} **\dir{.};
(0,-1.5)*{};(33,-1.5)*{} **\dir{.};
(0,-2)*{};(33,-2)*{} **\dir{.};
(0,-2.5)*{};(33,-2.5)*{} **\dir{.};
(0,-3)*{};(33,-3)*{} **\dir{-};
(0,0)*{};(33,0)*{} **\dir{-};
(0,3)*{};(33,3)*{} **\dir{-};
(0,9)*{};(33,9)*{} **\dir{-};
(0,19)*{};(33,19)*{} **\dir{-};
(0,25)*{};(33,25)*{} **\dir{-};
(0,35)*{};(33,35)*{} **\dir{-};
(0,41)*{};(33,41)*{} **\dir{-};
(0,44)*{};(33,44)*{} **\dir{-};
(0,47)*{};(33,47)*{} **\dir{-};
(0,53)*{};(33,53)*{} **\dir{-};
(0,59)*{};(33,59)*{} **\dir{-};
(33,-3)*{};(33,64)*{} **\dir{-};
(27,-3)*{};(27,64)*{} **\dir{-};
(21,-3)*{};(21,64)*{} **\dir{-};
(15,-3)*{};(15,64)*{} **\dir{-};
(9,-3)*{};(9,64)*{} **\dir{-};
(3,-3)*{};(3,64)*{} **\dir{-};
(30.2,-1.5)*{_0}; (24.2,-1.5)*{_0}; (18.2,-1.5)*{_0};(12.2,-1.5)*{_0}; (6.2,-1.5)*{_0};
(30.2,1.5)*{_0}; (30.2,6)*{_1}; (30.2,15)*{\vdots}; (30.2,22)*{_\ell}; (30.2,31)*{\vdots};
(30.2,38)*{_1}; (30.2,42.5)*{_0}; (30.2,45.5)*{_0}; (30.2,50)*{_1}; (30.2,56)*{_2};
(24.2,1.5)*{_0}; (24.2,6)*{_1}; (24.2,15)*{\vdots}; (24.2,22)*{_\ell}; (24.2,31)*{\vdots};
(24.2,38)*{_1}; (24.2,42.5)*{_0}; (24.2,45.5)*{_0}; (24.2,50)*{_1}; (24.2,56)*{_2};
(18.2,1.5)*{_0}; (18.2,6)*{_1}; (18.2,15)*{\vdots}; (18.2,22)*{_\ell}; (18.2,31)*{\vdots};
(18.2,38)*{_1}; (18.2,42.5)*{_0}; (18.2,45.5)*{_0}; (18.2,50)*{_1}; (18.2,56)*{_2};
(12.2,1.5)*{_0}; (12.2,6)*{_1}; (12.2,15)*{\vdots}; (12.2,22)*{_\ell}; (12.2,31)*{\vdots};
(12.2,38)*{_1}; (12.2,42.5)*{_0}; (12.2,45.5)*{_0}; (12.2,50)*{_1}; (12.2,56)*{_2};
(6.2,1.5)*{_0}; (6.2,6)*{_1}; (6.2,15)*{\vdots}; (6.2,22)*{_\ell}; (6.2,31)*{\vdots};
(6.2,38)*{_1}; (6.2,42.5)*{_0}; (6.2,45.5)*{_0}; (6.2,50)*{_1}; (6.2,56)*{_2};
(19,-9)*{\tyX = A_{2\ell}^{(2)}};
(60,-0.5)*{};(93,-0.5)*{} **\dir{.};
(60,-1)*{};(93,-1)*{} **\dir{.};
(60,-1.5)*{};(93,-1.5)*{} **\dir{.};
(60,-2)*{};(93,-2)*{} **\dir{.};
(60,-2.5)*{};(93,-2.5)*{} **\dir{.};
(60,-3)*{};(93,-3)*{} **\dir{-};
(60,0)*{};(93,0)*{} **\dir{-};
(60,3)*{};(93,3)*{} **\dir{-};
(60,9)*{};(93,9)*{} **\dir{-};
(60,19)*{};(93,19)*{} **\dir{-};
(60,25)*{};(93,25)*{} **\dir{-};
(60,28)*{};(93,28)*{} **\dir{-};
(60,31)*{};(93,31)*{} **\dir{-};
(60,37)*{};(93,37)*{} **\dir{-};
(60,47)*{};(93,47)*{} **\dir{-};
(60,53)*{};(93,53)*{} **\dir{-};
(60,56)*{};(93,56)*{} **\dir{-};
(60,59)*{};(93,59)*{} **\dir{-};
(60,65)*{};(93,65)*{} **\dir{-};
(93,-3)*{};(93,66)*{} **\dir{-};
(87,-3)*{};(87,66)*{} **\dir{-};
(81,-3)*{};(81,66)*{} **\dir{-};
(75,-3)*{};(75,66)*{} **\dir{-};
(69,-3)*{};(69,66)*{} **\dir{-};
(63,-3)*{};(63,66)*{} **\dir{-};
(90.2,-1.5)*{_0}; (84.2,-1.5)*{_0}; (78.2,-1.5)*{_0};(72.2,-1.5)*{_0}; (66.2,-1.5)*{_0};
(90.2,1.5)*{_0}; (90.2,6)*{_1}; (90.2,15)*{\vdots}; (90.2,22)*{_{\ell-1}}; (90.2,26.5)*{_{\ell}}; (90.2,29.5)*{_{\ell}}; (90.2,34)*{_{\ell-1}}; (90.2,43)*{\vdots};
(90.2,50)*{_1}; (90.2,54.5)*{_0}; (90.2,57.5)*{_0}; (90.2,62)*{_1};
(84.2,1.5)*{_0}; (84.2,6)*{_1}; (84.2,15)*{\vdots}; (84.2,22)*{_{\ell-1}}; (84.2,26.5)*{_{\ell}}; (84.2,29.5)*{_{\ell}}; (84.2,34)*{_{\ell-1}}; (84.2,43)*{\vdots};
(84.2,50)*{_1}; (84.2,54.5)*{_0}; (84.2,57.5)*{_0}; (84.2,62)*{_1};
(78.2,1.5)*{_0}; (78.2,6)*{_1}; (78.2,15)*{\vdots}; (78.2,22)*{_{\ell-1}}; (78.2,26.5)*{_{\ell}}; (78.2,29.5)*{_{\ell}}; (78.2,34)*{_{\ell-1}}; (78.2,43)*{\vdots};
(78.2,50)*{_1}; (78.2,54.5)*{_0}; (78.2,57.5)*{_0}; (78.2,62)*{_1};
(72.2,1.5)*{_0}; (72.2,6)*{_1}; (72.2,15)*{\vdots}; (72.2,22)*{_{\ell-1}}; (72.2,26.5)*{_{\ell}}; (72.2,29.5)*{_{\ell}}; (72.2,34)*{_{\ell-1}}; (72.2,43)*{\vdots};
(72.2,50)*{_1}; (72.2,54.5)*{_0}; (72.2,57.5)*{_0}; (72.2,62)*{_1};
(66.2,1.5)*{_0}; (66.2,6)*{_1}; (66.2,15)*{\vdots}; (66.2,22)*{_{\ell-1}}; (66.2,26.5)*{_{\ell}}; (66.2,29.5)*{_{\ell}}; (66.2,34)*{_{\ell-1}}; (66.2,43)*{\vdots};
(66.2,50)*{_1}; (66.2,54.5)*{_0}; (66.2,57.5)*{_0}; (66.2,62)*{_1};
(79,-9)*{\tyX = D_{\ell+1}^{(2)}};
\endxy
$$
For example, when $\tyX =  A_{4}^{(2)}$, the following are Young walls.

\begin{equation} \label{Ex: Young wall1}
\begin{aligned}
\xy
(22,0)*{};(40,0)*{} **\dir{-};   %%%%%%%%%%%%%%%%%% Y1
(22,3)*{};(40,3)*{} **\dir{-};
(22,6)*{};(40,6)*{} **\dir{-};
(28,12)*{};(40,12)*{} **\dir{-};
(22,0)*{};(22,6)*{} **\dir{-};
(28,0)*{};(28,12)*{} **\dir{-};
(34,0)*{};(34,12)*{} **\dir{-};
(40,0)*{};(40,12)*{} **\dir{-};
(22,0.5)*{};(40,0.5)*{} **\dir{.};
(22,1)*{};(40,1)*{} **\dir{.};
(22,1.5)*{};(40,1.5)*{} **\dir{.};
(22,2)*{};(40,2)*{} **\dir{.};
(22,2.5)*{};(40,2.5)*{} **\dir{.};
%(25.2,4.5)*{_0}; (31.2,4.5)*{_0}; (37.2,4.5)*{_0}; (31.2,9)*{_1}; (37.2,9)*{_1}; (31.2,15)*{_2}; (37.2,15)*{_2};
(31,-4)*{Y_1};
(50,0)*{};(62,0)*{} **\dir{-};   %%%%%%%%%%%%%%%%%% Y2
(50,3)*{};(62,3)*{} **\dir{-};
(50,6)*{};(62,6)*{} **\dir{-};
(50,12)*{};(62,12)*{} **\dir{-};
(50,18)*{};(62,18)*{} **\dir{-};
(50,24)*{};(62,24)*{} **\dir{-};
(50,27)*{};(62,27)*{} **\dir{-};
(56,30)*{};(62,30)*{} **\dir{-};
(50,0)*{};(50,27)*{} **\dir{-};
(56,0)*{};(56,30)*{} **\dir{-};
(62,0)*{};(62,30)*{} **\dir{-};
(50,0.5)*{};(62,0.5)*{} **\dir{.};
(50,1)*{};(62,1)*{} **\dir{.};
(50,1.5)*{};(62,1.5)*{} **\dir{.};
(50,2)*{};(62,2)*{} **\dir{.};
(50,2.5)*{};(62,2.5)*{} **\dir{.};
%(53.2,4.5)*{_0}; (59.2,4.5)*{_0}; (53.2,9)*{_1}; (59.2,9)*{_1}; (53.2,15)*{_2}; (59.2,15)*{_2}; (53.2,21)*{_1}; (59.2,21)*{_1}; (53.2,25.5)*{_0}; (59.2,25.5)*{_0}; (59.2,28.5)*{_0}; (59.2,33)*{_1};
(56,-4)*{Y_2};
(72,0)*{};(90,0)*{} **\dir{-};   %%%%%%%%%%%%%%%%%% Y3
(72,3)*{};(90,3)*{} **\dir{-};
(72,6)*{};(90,6)*{} **\dir{-};
(78,12)*{};(90,12)*{} **\dir{-};
(78,18)*{};(90,18)*{} **\dir{-};
(78,24)*{};(90,24)*{} **\dir{-};
(78,27)*{};(90,27)*{} **\dir{-};
(72,0)*{};(72,6)*{} **\dir{-};
(78,0)*{};(78,27)*{} **\dir{-};
(84,0)*{};(84,27)*{} **\dir{-};
(90,0)*{};(90,27)*{} **\dir{-};
(72,0.5)*{};(90,0.5)*{} **\dir{.};
(72,1)*{};(90,1)*{} **\dir{.};
(72,1.5)*{};(90,1.5)*{} **\dir{.};
(72,2)*{};(90,2)*{} **\dir{.};
(72,2.5)*{};(90,2.5)*{} **\dir{.};
%(75.2,4.5)*{_0}; (81.2,4.5)*{_0}; (87.2,4.5)*{_0}; (81.2,9)*{_1}; (87.2,9)*{_1}; (81.2,15)*{_2}; (87.2,15)*{_2}; (81.2,21)*{_1}; (87.2,21)*{_1}; (81.2,25.5)*{_0}; (87.2,25.5)*{_0};
(81,-4)*{Y_3};
\endxy
\end{aligned}
\end{equation}
For a Young wall $Y$, we write $Y = (y_k)_{k=0}^\infty = (\ldots,
y_2, y_1, y_0) $ as a sequence of its columns. Let $|y_k|$ be the
number of blocks in $y_k$ for $k\in \Z_{\ge 0}$. We denote by
$\ell(Y)$ the number of nonzero columns of $Y$, i.e. $ \ell(Y) = \#
\{  y_k \mid |y_k| \neq 0, k\ge 0 \} $, and set
\begin{align*}
|Y| = \sum_{k \in \Z_{\ge 0 }} |y_k|.
\end{align*}
Note that $|y_k|$ are weakly decreasing from right to left.
The numbers appeared in the above pattern belong to the index set $I$ and they are called {\it residues} or {\it colors} of Young walls.
When a block $b$ in a Young wall $Y$, simply $b \in Y$, has residue $i \in I$, we say that $b$ is an {\it $i$-block} in $Y$.

\begin{defn}
\begin{enumerate}
\item A column of a Young wall is called a {\it full column} if its height is a multiple of the unit length.
\item A Young wall is said to be {\it proper} if none of the full columns have the same heights.
\item An $i$-block of a proper Young wall $Y$ is called a {\it removable} $i$-block if $Y$ remains a proper Young wall after removing the block.
\item A place in a proper Young wall $Y$ is called an {\it admissible} or {\it addable} $i$-slot if $Y$ remains a proper Young wall after adding an $i$-block at the place.
\end{enumerate}
\end{defn}
%\ber We denote by $\mathcal{Z}(\Lambda_0)$ the set of all  proper Young walls. \er
\noindent
If $b$ is an $i$-block or an $i$-slot in $Y$ and
$j$ (resp.\ $k$) is the residue directly above $b$ (resp.\ directly
below $b$), then we say
\begin{align}
\res(b) = i, \qquad \res_+(b) = j, \qquad \res_-(b) = k.
\end{align}

For $i\in I$, we set
$$ a_i = \left\{
           \begin{array}{ll}
             1 & \hbox{ if $i = \ell$ and $\tyX = A_{2\ell}^{(2)}$,}  \\
             2 & \hbox{otherwise.}
           \end{array}
         \right.
 $$
Note that the weight $\sum_{i\in I} a_i \alpha_i$ (resp.\ $\frac{1}{2}\sum_{i\in I} a_i \alpha_i$) is the null root $\delta$ of $U_q(A_{2\ell}^{(2)})$ (resp.\ $U_q(D_{\ell+1}^{(2)})$).
\begin{defn}
\begin{enumerate}
\item The part of a column in a proper Young wall is called a {\it $\delta$-column} if it contains $a_0$-many $0$-blocks,  $a_1$-many $1$-blocks, \ldots,
$a_\ell$-many $\ell$-blocks  in some cyclic order.
\item A $\delta$-column in a proper Young wall $Y$ is {\it removable} if one can remove the $\delta$-column from $Y$ and the resulting is a proper Young wall.
\item A proper Young wall is said to be {\it reduced} if it has no removable $\delta$-column.
\end{enumerate}
\end{defn}
\noindent
For example, considering the Young walls in $\eqref{Ex: Young wall1}$, $Y_1$ is not proper, $Y_2$ is proper but not reduced, and $Y_3$ is proper and reduced.

Let $\mathcal{Y}(\Lambda_0)$ be the set of all reduced proper Young walls. Kashiwara operators $\tilde{e}_i$ and $\tilde{f}_i$ can be defined by considering the combinatorics of Young walls,
which yield that $\mathcal{Y}(\Lambda_0)$ is a $U_q(\tyX)$-crystal \cite{HK02,Kang03}.
\begin{Thm} \cite[Thm.7.1]{Kang03}
The crystal $\mathcal{Y}(\Lambda_0)$ is isomorphic to the crystal of
the irreducible highest weight $U_q(\tyX)$-module $V(\Lambda_0)$
with highest weight $\Lambda_0$.
\end{Thm}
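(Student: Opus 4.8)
The plan is to realize both crystals as sets of paths and compare them, following the perfect-crystal method. By the path realization theorem of Kang--Kashiwara--Misra--Miwa--Nakashima--Nakayashiki \cite{KKMMNN91, KKMMNN92}, if $B$ is a perfect crystal of level one for $U_q(\tyX)$ with ground-state element $b_{\Lambda_0}$, then the crystal $B(\Lambda_0)$ of $V(\Lambda_0)$ is isomorphic to the crystal $\mathcal{P}(\Lambda_0)$ of $\Lambda_0$-paths: sequences $\mathbf{p} = (\ldots, p_2, p_1, p_0)$ with $p_k \in B$ and $p_k = b_{\Lambda_0}$ for $k \gg 0$, equipped with the tensor-product crystal structure inherited from $B^{\otimes \infty}$ and twisted by the local energy function $H$. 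So the strategy is to produce an explicit crystal isomorphism $\phi \colon \mathcal{Y}(\Lambda_0) \xrightarrow{\ \sim\ } \mathcal{P}(\Lambda_0)$ and then invoke this theorem.

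First I would fix, for each of the types $A^{(2)}_{2\ell}$ and $D^{(2)}_{\ell+1}$, the explicit level-one perfect crystal $B$ together with its affine crystal structure, the ground-state element $b_{\Lambda_0}$, and the energy function $H$. Next I would set up the bijection: to a reduced proper Young wall $Y = (\ldots, y_2, y_1, y_0)$ I associate the sequence obtained by encoding each column $y_k$, read against the fixed coloring pattern, as an element $\phi(y_k) \in B$ determined by the multiset of residues in that column (its slice modulo $\delta$-columns). The ground-state wall maps to the ground-state path. The two combinatorial hypotheses are precisely what make this land in $\mathcal{P}(\Lambda_0)$: the \emph{proper} condition (no two full columns of equal height) and the \emph{reduced} condition (no removable $\delta$-column) force the admissibility/energy constraints between consecutive factors $\phi(y_{k+1}) \otimes \phi(y_k)$ and guarantee $\phi(y_k) = b_{\Lambda_0}$ for $k \gg 0$; one then checks $\phi$ is a bijection onto $\mathcal{P}(\Lambda_0)$.

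It then remains to verify that $\phi$ intertwines weights and Kashiwara operators. The weight side is a bookkeeping check: $\mathrm{wt}(Y) = \Lambda_0 - \sum_{b \in Y} \alpha_{\res(b)}$ must agree with the weight of $\phi(Y)$ computed from the tensor rule corrected by $H$. For the operators, recall that $\tilde{e}_i$ and $\tilde{f}_i$ on Young walls are defined by listing the addable $i$-slots and removable $i$-blocks from right to left, cancelling adjacent $(+,-)$ pairs, and acting on the surviving extremal slot or block. I would show that this $i$-signature agrees, column by column, with the signature produced by Kashiwara's tensor-product rule applied to $\bigotimes_k \phi(y_k)$. This reduces to a purely local dictionary: for each column, comparing the values $\varepsilon_i(\phi(y_k))$ and $\varphi_i(\phi(y_k))$ of the perfect-crystal element with the numbers of removable $i$-blocks and addable $i$-slots in $y_k$, and checking this dictionary is compatible with adding or removing a single block.

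The main obstacle will be exactly this last local comparison: matching the bracketing and cancellation of the Young-wall $i$-signature with the tensor-product signature. The delicacy is that the tensor rule weights each factor by $\varphi_i$ and $\varepsilon_i$ of the perfect-crystal element, and these have to be read off correctly from the local shape of the wall at column $k$, including its interaction with the neighbouring column through the energy function $H$. Establishing the precise correspondence between $\bigl(\varepsilon_i(\phi(y_k)), \varphi_i(\phi(y_k))\bigr)$ and the combinatorics of $i$-blocks and $i$-slots, and confirming its stability under a single application of $\tilde{e}_i$ or $\tilde{f}_i$, is where essentially all the work lies. Once this local dictionary is in hand, the conclusion that $\phi$ is a crystal isomorphism is formal: both $\mathcal{Y}(\Lambda_0)$ and $\mathcal{P}(\Lambda_0)$ are connected with highest weight vector the ground state of weight $\Lambda_0$, so the weight- and operator-preserving bijection $\phi$ is an isomorphism, and composing with the path realization theorem identifies $\mathcal{Y}(\Lambda_0)$ with the crystal of $V(\Lambda_0)$.
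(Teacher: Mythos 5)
The paper offers no proof of this statement---it is quoted directly from \cite[Thm.7.1]{Kang03}---and your proposal reconstructs essentially the argument of that original source: Kang's proof proceeds exactly through the level-one perfect crystals and the path realization of \cite{KKMMNN91,KKMMNN92}, encoding each column of a reduced proper Young wall as a perfect-crystal element and matching the Young-wall $i$-signature with Kashiwara's tensor-product rule. So your proposal is correct and follows the same route as the cited proof.
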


\subsection{Fock space representations} \label{Sec: Fock}
%In this subsection, we review Fock space representations of quantum affine algebras using Young walls introduced in \cite{KangKwon08}.
%These Fock space representations will take crucial roles in proving the graded dimension formula for finite quiver Hecke algebras.
Let $\mathcal{Z}(\Lambda_0)$ be the set of all proper Young walls and let
$$ \mathcal{F}(\Lambda_0) = \bigoplus_{Y \in \mathcal{Z}(\Lambda_0)} \Q(q)Y $$
be the $\Q(q)$-vector space generated by $\mathcal{Z}(\Lambda_0)$. We now define a $U_q(\tyX)$-module structure on $\mathcal{F}(\Lambda_0)$ by following \cite{KangKwon08}.

Let $Y = (y_k)_{k=0}^\infty \in \mathcal{Z}(\Lambda_0)$, and let
\begin{align*}
\wt(y_k) = \sum_{i\in I} t_{ki} \alpha_i, \quad  \wt(Y) = \Lambda_0
- \sum_{k\in \Z_{\ge0}} \wt(y_k)  ,
%\quad ??  \ber \wt^{\Lambda_0}(Y) = \sum_{k\in \Z_{\ge0}} \wt(y_k), ?? \er
\end{align*}
where $ t_{ki} $ is the number of $i$-blocks in the $k$th column
$y_k$ of $Y$. If $b$ is a block or a slot in the $r$th row in $Y$,
then we set
\begin{equation} \label{Eq: delta}
\begin{aligned}
 \delta_-(b;Y) &= \left\{
                  \begin{array}{ll}
                    1 & \hbox{ if } \res(b) = \res_-(b),\ r > 1, \\
                    0 & \hbox{ otherwise.}
                  \end{array}
                \right. \\
\delta_+(b;Y) &= \left\{
                  \begin{array}{ll}
                    1 & \hbox{ if } \res(b) = \res_+(b),\ r > 1, \\
                    0 & \hbox{ otherwise.}
                  \end{array}
                \right.
\end{aligned}
\end{equation}
Note that the ground-state wall is the 0th row. For $k \in \Z_{\ge
0}$ and $i\in I$, we set
\begin{align} \label{Eq: di}
d_i(Y, k) = - \wt(y_k) (h_i) \qquad  \text{ for } k\in \Z_{\ge0}.
\end{align}
Note that
\begin{align} \label{Eq: value of di} %do we need?
d_i(Y, k) = \left\{
              \begin{array}{ll}
                -2 & \hbox{ if } i = \res(b) = \res_-(b),  \\
                -1 & \hbox{ if } i = \res(b) \ne \res_-(b),\ i \ne \res_+(b), \\
                2 & \hbox{ if } i = \res(s) = \res_+(s), \\
                1 & \hbox{ if } i = \res(s) \ne \res_+(s),\ i \ne \res_-(s),\\
                0 & \hbox{ otherwise,}
              \end{array}
            \right.
\end{align}
where $b$ is the block at the top of the $k$th column $y_k$ of $Y$, and $s$ is the slot on the top of the $k$th column $y_k$ of $Y$.

If $b$ is either a removable $i$-block or an admissible $i$-slot in the $p$th column $y_p$ of $Y$, we set
\begin{align*}
Y \nearrow b &= \text{ proper Young wall obtained by removing $b$ from $Y$ if $b$ is removable, } \\
Y \swarrow b &= \text{ proper Young wall obtained by adding an $i$-block at $b$ if $b$ is admissible, }
\end{align*}
and
\begin{equation} \label{Eq: Fock space1}
\begin{aligned}
& l(b; Y ) = \# \{ y_k \mid |y_k| = |y_p|   \} , \\
& L_i(b;Y) = \sum_{k > p} d_i(Y, k) + \Lambda_0(h_i) ,  \qquad \quad  \  R_i(b;Y) = \sum_{ p> k} d_i(Y, k).
\end{aligned}
\end{equation}

Then, we define the action of Chevalley generators $f_i, e_i$ on
$\mathcal{F}(\Lambda_0)$ as follows: For $Y
\in\mathcal{Z}(\Lambda_0)$,
\begin{equation} \label{Eq: action of f, e on Fock space}
\begin{aligned}
f_i Y &= \sum_{b} q_i^{L_i(b;Y)} \left( \frac{ 1-(-q^2)^{l(b;Y)} }{q} \right)^{\delta_-(b;Y)} (Y \swarrow b) , \\
e_i Y &= \sum_{b'} q_i^{-R_i(b';Y)} \left( \frac{ 1-(-q^2)^{l(b';Y)}
}{q} \right)^{\delta_+(b';Y)} (Y \nearrow b') ,
%f_i Y &= \sum_{b} q_i^{L_i(b;Y)-\delta_L(b;Y)} \big( 1-(-q^2)^{l(b;Y)} \big)^{\delta_L(b;Y)} (Y \swarrow b) , \\
%e_i Y &= \sum_{b'} q_i^{-R_i(b';Y)-\delta_R(b;Y)} \big(
%1-(-q^2)^{l(b';Y)} \big)^{\delta_R(b';Y)} (Y \nearrow b') ,
\end{aligned}
\end{equation}
where $b$ and $b'$ run over all admissible $i$-slots and all
removable $i$-blocks in $Y$ respectively. Note that the above
description of the Chevalley generators actions is simpler than that
in \cite[Section 5]{KangKwon08} since we only focus on the case of type $\tyX =
A_{2\ell}^{(2)}, D_{\ell+1}^{(2)}$.

%\ber
%Park: Do we need an example for the actions of $e_i$, $f_i$ ?
%\er

\begin{Thm}\cite[Thm.5.5]{KangKwon08} \label{Thm: Fock space}
The Fock space $\mathcal{F}(\Lambda_0)$ is a $U_q(\tyX)$-module. Moreover, the submodule of $\mathcal{F}(\Lambda_0)$ generated by $\emptyset$ is isomorphic to $V(\Lambda_0)$.
\end{Thm}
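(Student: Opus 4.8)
The plan is to verify that the operators in $\eqref{Eq: action of f, e on Fock space}$ satisfy the defining relations of Definition \ref{Def: GKM}, and then to pin down the submodule generated by $\emptyset$ using the known crystal structure. First I would let $q^h$ act diagonally on $\mathcal{F}(\Lambda_0)$ by $q^h Y = q^{\langle h, \wt(Y)\rangle} Y$ for $Y \in \mathcal{Z}(\Lambda_0)$ and $h \in \wlP^{\vee}$. Relation (1) is then immediate, and relation (2) follows from the observation that adding an $i$-block to a column decreases $\wt$ by $\alpha_i$ while removing one increases it by $\alpha_i$; hence $f_i$ and $e_i$ are homogeneous of weights $-\alpha_i$ and $\alpha_i$, which yields $q^h e_i q^{-h} = q^{\langle h,\alpha_i\rangle}e_i$ and the companion identity for $f_i$ at once.

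The core of the module structure is relations (3) and (4). For (3) I would fix $Y \in \mathcal{Z}(\Lambda_0)$ and expand $e_i f_j Y - f_j e_i Y$ using $\eqref{Eq: action of f, e on Fock space}$. The terms in which $f_j$ adds a block and $e_i$ removes a block at two genuinely different places produce the same proper Young wall in both orders; the point is that the prefactors $q_i^{\pm L_i}$, $q_i^{\mp R_i}$, the factor built from $l(b;Y)$, and the exponents $\delta_\pm(b;Y)$ are engineered so that these off-diagonal contributions cancel, leaving only the diagonal terms where the slot filled by $f_j$ coincides with the block removed by $e_i$ (which can happen only when $i=j$). Summing the diagonal contributions over all columns and using the identities relating $L_i(b;Y)+R_i(b;Y)$ and $d_i(Y,k)$ to $\langle h_i,\wt(Y)\rangle$ coming from $\eqref{Eq: di}$ and $\eqref{Eq: Fock space1}$, one recovers $\delta_{ij}(K_i-K_i^{-1})/(q_i-q_i^{-1})$ with $K_i Y = q_i^{\langle h_i, \wt(Y)\rangle}Y$. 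The Serre relations (4) are the main obstacle: since only two colors $i \ne j$ occur, I would reduce to the local $\{i,j\}$-configurations allowed by the block pattern of type $\tyX$, and verify the quantum Serre identity as a finite coefficient computation for each value of $a_{ij} \in \{0,-1,-2\}$; the nonsymmetric entries $a_{01}=-2$ (and $a_{\ell,\ell-1}=-2$ in type $D_{\ell+1}^{(2)}$) force the delicate cases, where the half-height blocks and the factor involving $l(b;Y)$ must be tracked with care.

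For the second assertion, note that $\wt(\emptyset)=\Lambda_0$ and that $\emptyset$ has no removable block, so $e_i\emptyset=0$ for every $i$; thus $\emptyset$ is a highest weight vector and $M := U_q(\tyX)\emptyset = U_q^-(\tyX)\emptyset$ is a highest weight module of highest weight $\Lambda_0$. Being a highest weight module, $M$ is a quotient of the Verma module $M(\Lambda_0)$ and therefore surjects onto the irreducible $V(\Lambda_0)$. To upgrade this surjection to an isomorphism I would compare characters. Applying monomials $f_{i_1}\cdots f_{i_n}$ to $\emptyset$ yields proper Young walls whose $q\to 0$ (leading) behavior is governed by the combinatorial Kashiwara operators $\tilde f_i$ on $\mathcal{Y}(\Lambda_0)$; a crystal-lattice and triangularity argument then shows that the reduced proper Young walls form a basis of $M$, with associated crystal $\mathcal{Y}(\Lambda_0)$. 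By \cite[Thm.7.1]{Kang03} this crystal is the crystal $B(\Lambda_0)$ of $V(\Lambda_0)$, so $\dim M_\mu = \dim V(\Lambda_0)_\mu$ for every weight $\mu$; hence the surjection $M \twoheadrightarrow V(\Lambda_0)$ is an isomorphism. I expect the verification of the Serre relations in the nonsymmetric cases, together with setting up the crystal lattice that forces the reduced walls to be a basis of $M$, to be the two genuinely laborious steps.
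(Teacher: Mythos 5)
You should know at the outset that the paper contains no proof of this statement: it is imported verbatim from Kang--Kwon \cite[Thm.5.5]{KangKwon08}, so the only meaningful comparison is with the proof there. That proof does not verify the relations of Definition \ref{Def: GKM} one by one; it rests on the perfect-crystal machinery of \cite{KMPY96}, in which the $U_q(\tyX)$-module structure (Serre relations included) is inherited from tensor products of column modules via the coproduct, and the combinatorial formulas $\eqref{Eq: action of f, e on Fock space}$ arise as the resulting matrix coefficients. Your proposal is therefore a genuinely different, from-scratch route --- which would be fine, except that its two hardest steps are named rather than carried out, and as described at least one of them would not go through.

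The first gap is the Serre relations. You propose to check them as a ``finite coefficient computation'' on local $\{i,j\}$-configurations, but the action $\eqref{Eq: action of f, e on Fock space}$ is not local: the exponents $L_i(b;Y)$ and $R_i(b;Y)$ of $\eqref{Eq: Fock space1}$ are sums of $d_i(Y,k)$ over \emph{all} columns to one side of $b$, and these sums change as other blocks are added, so the coefficient of a fixed final wall in a monomial $f_i^{a}f_jf_i^{b}Y$ does not split into a local factor times an order-independent global factor without an argument; supplying that argument amounts to re-deriving the coproduct/tensor-product origin of the formulas, i.e.\ the actual proof in \cite{KangKwon08}. The standard device that avoids brute force --- relations (1)--(3) together with integrability force the Serre relations to act by zero --- is never invoked, and it is not free here either: local nilpotency of $f_i$ on $\mathcal{F}(\Lambda_0)$ is delicate for these types precisely because a block of a given residue can be added either on top of a block of the same residue or in a new column of equal height, and only the properness constraint together with the factors $(1-(-q^2)^{l(b;Y)})/q$ produces the needed cancellations. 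The second gap is the identification $U_q(\tyX)\emptyset\cong V(\Lambda_0)$. The surjection onto $V(\Lambda_0)$ is fine, but injectivity is carried entirely by your ``crystal-lattice and triangularity argument'' showing that reduced proper Young walls form a basis of the submodule; that assertion is the hard content of Kang--Kwon's theorem, and it cannot be outsourced to \cite[Thm.7.1]{Kang03}, which concerns the abstract crystal $\mathcal{Y}(\Lambda_0)$ and says nothing about the span of reduced walls inside $\mathcal{F}(\Lambda_0)$ being $U_q(\tyX)$-stable, nor about any $q\to 0$ triangularity of the module action. A concrete symptom of what is being skipped: if the theorem holds, then $f_0^2\emptyset=0$ in $\mathcal{F}(\Lambda_0)$, since $\langle h_0,\Lambda_0\rangle=1$ and $V(\Lambda_0)$ is integrable; yet expanding $f_0^2\emptyset$ by $\eqref{Eq: action of f, e on Fock space}$ produces a priori contributions from placing the two $0$-blocks in one column or in two adjacent columns, and seeing that the total is what $V(\Lambda_0)$ allows hinges on exactly which of those walls are proper and on the $(1-(-q^2)^{l})/q$ factors --- the kind of analysis your sketch classifies as routine bookkeeping. (A cleaner route for this half would be: prove the submodule is integrable, and use that an integrable highest weight module is automatically irreducible; but that again requires the local-nilpotency analysis you have not supplied.)
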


\vskip 1em

\section{Quiver Hecke algebras}

We now recall the definition of the cyclotomic quiver Hecke algebra. Throughout this section, we denote by $U_q(\g)$ a quantum Kac-Moody algebra associated with a symmetrizable Cartan matrix. Let $\bR$ be a field, and, for $i,j\in I$, let
$\mathcal{Q}_{i,j}(u,v)\in\bR[u,v]$ be polynomials of the form
\begin{align*}
\mathcal{Q}_{i,j}(u,v) = \left\{
                 \begin{array}{ll}
                   \sum_{p(\alpha_i|\alpha_i)+q (\alpha_j|\alpha_j) + 2(\alpha_i|\alpha_j)=0} t_{i,j;p,q} u^pv^q & \hbox{if } i \ne j,\\
                   0 & \hbox{if } i=j,
                 \end{array}
               \right.
\end{align*}
where $(a_{ij})$ is the Cartan matrix of $U_q(\g)$, $t_{i,j;p,q} \in \bR$ are such that $t_{i,j;-a_{ij},0} \ne 0$, and $\mathcal{Q}_{i,j}(u,v) = \mathcal{Q}_{j,i}(v,u)$.
Note that the symmetric group $\mathfrak{S}_n = \langle s_k \mid k=1, \ldots, n-1 \rangle$ acts on $I^n$ by place permutations.

\begin{defn} \
The {\it cyclotomic quiver Hecke algebra} $R^{\Lambda}(n)$ associated with polynomials $(\mathcal{Q}_{i,j}(u,v))_{i,j\in I}$ and a dominant integral weight $\Lambda \in \wlP^+$
is the $\Z$-graded  $\bR$-algebra defined by three sets of generators
$$\{e(\nu) \mid \nu = (\nu_1,\ldots, \nu_n) \in I^n\}, \;\{x_k \mid 1 \le k \le n\}, \;\{\tau_l \mid 1 \le l \le n-1\} $$
subject to the following relations:
\begin{align*}
& e(\nu) e(\nu') = \delta_{\nu,\nu'} e(\nu),\ \sum_{\nu \in I^{n}} e(\nu)=1,\
x_k e(\nu) =  e(\nu) x_k, \  x_k x_l = x_l x_k,\\
& \tau_l e(\nu) = e(s_l(\nu)) \tau_l,\  \tau_k \tau_l = \tau_l \tau_k \text{ if } |k - l| > 1, %\\[5pt]
%&
\  \tau_k^2 e(\nu) = \mathcal{Q}_{\nu_k, \nu_{k+1}}(x_k, x_{k+1}) e(\nu), \\[5pt]
&  (\tau_k x_l - x_{s_k(l)} \tau_k ) e(\nu) =
%\delta_{\nu_k,\nu_{k+1}}(-1)^{\delta_{l,k}}e(\nu),\\
\left\{
                                                           \begin{array}{ll}
                                                             -  e(\nu) & \hbox{if } l=k \text{ and } \nu_k = \nu_{k+1}, \\
                                                               e(\nu) & \hbox{if } l = k+1 \text{ and } \nu_k = \nu_{k+1},  \\
                                                             0 & \hbox{otherwise,}
                                                           \end{array}
                                                         \right. \\[5pt]
&( \tau_{k+1} \tau_{k} \tau_{k+1} - \tau_{k} \tau_{k+1} \tau_{k} )
e(\nu)
=\delta_{\nu_{k},\nu_{k+2}}\dfrac{\mathcal{Q}_{\nu_k,\nu_{k+1}}(x_k,x_{k+1})
- \mathcal{Q}_{\nu_k,\nu_{k+1}}(x_{k+2},x_{k+1})}{x_{k}-x_{k+2}}
e(\nu), \\
 %\\[4pt]
%&\qquad \qquad \qquad = \left\{
%                                                                                   \begin{array}{ll}
%\displaystyle \frac{\mathcal{Q}_{\nu_k,\nu_{k+1}}(x_k,x_{k+1}) -
%\mathcal{Q}_{\nu_k,\nu_{k+1}}(x_{k+2},x_{k+1})}{x_{k}-x_{k+2}} e(\nu) & \hbox{if } \nu_k = \nu_{k+2}, \\
%0 & \hbox{otherwise}, \end{array}
%\right.\\[5pt]
& x_1^{\langle h_{\nu_1}, \Lambda \rangle} e(\nu)=0.
\end{align*}
\end{defn}

\bigskip
The $\Z$-grading on $R^\Lambda(n)$ is given as follows:
\begin{align*}
\deg(e(\nu))=0, \quad \deg(x_k e(\nu))= ( \alpha_{\nu_k}
|\alpha_{\nu_k}), \quad  \deg(\tau_l e(\nu))=
-(\alpha_{\nu_{l}} | \alpha_{\nu_{l+1}}).
\end{align*}

For $\beta \in \mathsf{Q}^+$ with $|\beta|=n$, let
$$ I^\beta = \{ \nu=(\nu_1, \ldots, \nu_n) \in I^n \mid \alpha_{\nu_1} + \cdots + \alpha_{\nu_n} = \beta \}.$$
As $I^\beta$ is invariant under the action of $\mathfrak{S}_n$, the element $ e(\beta) = \sum_{\nu \in I^\beta} e(\nu)$ is a central idempotent of $R^\Lambda(n)$.
We set
$$R^\Lambda(\beta) = R^\Lambda(n) e(\beta).$$
In particular, when $\Lambda = \Lambda_0$, we call the algebras $\fqH(\beta)$ \emph{finite quiver Hecke algebras}.

We denote the direct sum of the Grothendieck groups of the categories $\proj(R^\Lambda (\beta))$ of finitely generated projective graded $R^\Lambda(\beta)$-modules by
$$ [\proj(R^\Lambda)] = \bigoplus_{\beta\in \rlQ^+} [\proj(R^\Lambda(\beta))]. $$
Note that $[\proj(R^\Lambda)]$ has a free $\A$-module structure
induced from the $\Z$-grading on $R^\Lambda(\beta)$, i.e. $(qM)_k =
M_{k-1}$ for a graded module $M = \bigoplus_{k \in \Z} M_k $. Let
$e(\nu, i)$ be the idempotent corresponding to the concatenation of
$\nu$ and $(i)$, and set $e(\beta, i) = \sum_{\nu \in I^\beta}
e(\nu, i)$ for $\beta \in \rlQ^+$. Define functors
\begin{align*}
E_i : \fmod(R^\Lambda(\beta + \alpha_i)) \longrightarrow \fmod(R^\Lambda(\beta)) \ \text{ and } \ %, \\
F_i : \fmod(R^\Lambda(\beta )) \longrightarrow
\fmod(R^\Lambda(\beta+ \alpha_i)),
\end{align*}
between categories $\fmod(R^\Lambda(\beta))$ and
$\fmod(R^\Lambda(\beta + \alpha_i))$ of finitely generated graded modules
by
$$E_i(N)  = q_i^{1 - \langle h_i, \Lambda - \beta \rangle} \left( e(\beta,i)N \right) \ \text{ and } \ F_i(M) = R^\Lambda(\beta+ \alpha_i) e(\beta,i) \otimes_{R^\Lambda(\beta)}M,$$
for $M \in \fmod(R^\Lambda(\beta))$ and $N \in \fmod(R^\Lambda(\beta
+ \alpha_i))$, respectively. Then, $E_i$ and $F_i$ give a
$U_\A(\g)$-module structure to $[\proj(R^{\Lambda})]$.
\begin{Thm}[\protect{\cite[Thm.6.2]{KK11}}] \label{Thm: categorification thm}
There exists a $U_\A(\g)$-module isomorphism between $[\proj(R^{\Lambda})]$ and $V_\A(\Lambda)$.
\end{Thm}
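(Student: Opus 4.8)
This is the Kang--Kashiwara categorification theorem; the plan is to exhibit a categorical $U_\A(\g)$-action on $[\proj(R^\Lambda)]$ and then identify the resulting module with $V_\A(\Lambda)$ by a highest-weight argument. First I would verify that the functors behave well on Grothendieck groups. The functor $F_i(M) = R^\Lambda(\beta+\alpha_i)e(\beta,i)\otimes_{R^\Lambda(\beta)}M$ is exact and sends projectives to projectives because the bimodule $R^\Lambda(\beta+\alpha_i)e(\beta,i)$ is projective as a right $R^\Lambda(\beta)$-module, while $E_i$ is essentially multiplication by the idempotent $e(\beta,i)$ with a grading twist and is therefore exact as well. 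Thus $[E_i]$ and $[F_i]$ are well-defined $\A$-linear operators, and I would next record the adjunction (up to grading shift) between $E_i$ and $F_i$, which equips $[\proj(R^\Lambda)]$ with a bilinear form and makes the two operators adjoint.

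The technical heart is the construction of short exact sequences of endofunctors categorifying the commutation relation $e_if_i - f_ie_i = \dfrac{K_i - K_i^{-1}}{q_i - q_i^{-1}}$. Here one analyzes the bimodule $e(\beta,i)R^\Lambda(\beta+\alpha_i)e(\beta,i)$ and, using the defining relations of $R^\Lambda$ --- in particular the cyclotomic relation $x_1^{\langle h_{\nu_1},\Lambda\rangle}e(\nu)=0$ --- builds natural transformations fitting into exact sequences relating $F_iE_i$ and $E_iF_i$ with a number of copies of the identity functor governed by $\langle h_i,\Lambda-\beta\rangle$, the sign of this integer selecting which of the two possible sequences occurs. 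Passing to $[\proj(R^\Lambda)]$ then yields the desired $U_q(\mathfrak{sl}_2)$-type commutation relation.

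The quantum Serre relations on the $F_i$ side are inherited from the braid-type relations among the $\tau_l$ in the (affine) quiver Hecke algebra $R$ and survive in the cyclotomic quotient, and together with the commutation relations above they make $[\proj(R^\Lambda)]$ into a $U_\A(\g)$-module. To identify it with $V_\A(\Lambda)$, observe that the class of the one-dimensional $R^\Lambda(0)=\bR$-module is a highest weight vector of weight $\Lambda$: it is annihilated by every $E_i$ and carries the correct weight. The relations show $[\proj(R^\Lambda)]$ is generated by this vector as an integrable highest weight module, so the universal property gives a surjection $V_\A(\Lambda)\twoheadrightarrow[\proj(R^\Lambda)]$; comparing graded weight-space ranks, with nondegeneracy supplied by the adjunction form, upgrades this to an isomorphism.

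The hard part will be the second step: the explicit construction of the categorical commutation isomorphisms and the verification that the associated functor sequences are exact, since this demands delicate manipulation of the generators $x_k$ and $\tau_l$ inside the cyclotomic quotient. It is precisely here that the relation $x_1^{\langle h_{\nu_1},\Lambda\rangle}e(\nu)=0$ must be invoked in order to truncate the infinite affine action down to the integrable one of weight $\Lambda$.
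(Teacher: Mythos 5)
The paper does not prove this statement at all: it is quoted verbatim from Kang--Kashiwara (\cite[Thm.6.2]{KK11}) and used as a black box, so there is no internal proof to compare your attempt against. Your outline does faithfully track the actual strategy of \cite{KK11} --- exactness of $E_i$ and $F_i$, a categorical $\mathfrak{sl}_2$-commutation relation realized by exact sequences of bimodules in which the cyclotomic relation $x_1^{\langle h_{\nu_1},\Lambda\rangle}e(\nu)=0$ truncates the affine action, Serre relations inherited from the braid relations among the $\tau_l$, and a highest-weight identification with $V_\A(\Lambda)$.

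However, as a proof your proposal has genuine gaps at exactly the points where the real work lies. First, you assert that $F_i$ sends projectives to projectives ``because the bimodule $R^\Lambda(\beta+\alpha_i)e(\beta,i)$ is projective as a right $R^\Lambda(\beta)$-module''; this projectivity is not an observation but one of the central theorems of \cite{KK11}, proved there by a delicate induction intertwined with the construction of the very exact sequences you defer to the second step --- quoting it as a reason is circular. Second, the identification with $V_\A(\Lambda)$ needs more than generation by a highest weight vector: generation alone only gives a map from a Verma-type object, and to land on the \emph{integrable} module $V_\A(\Lambda)$ you must establish local nilpotency of the $[E_i]$ and $[F_i]$ (equivalently, vanishing of $R^\Lambda(\beta)$ for suitable $\beta$), which again is part of the theorem being proved rather than an input. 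Finally, ``comparing graded weight-space ranks'' presupposes knowledge of $\dim_q R^\Lambda(\beta)$ --- precisely the kind of information this paper derives as a \emph{consequence} of the theorem --- so that step as stated cannot close the argument; in \cite{KK11} the isomorphism is instead obtained from the categorified $\mathfrak{sl}_2$-relations together with the pairing between $[\proj(R^\Lambda)]$ and the Grothendieck group of finite-dimensional modules. In short, your text is a correct roadmap of the known proof, but the assertions carrying the mathematical weight are exactly the ones left unproved.
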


For a graded module $ M = \bigoplus_{k\in \Z} M_k $, the {\it graded dimension} of $M$ is defined by
$$ \dim_q M = \sum_{k\in \Z} \dim(M_k)q^{k} . $$
Note that $\dim_q (q^t M) = q^t \dim_q M$. For $\Lambda \in \wlP^+$ and $\beta \in \rlQ^+$, set
$$ \df (\Lambda, \beta) = (\beta | \Lambda) - \frac{1}{2} (\beta|\beta). $$
%We will present and prove a graded dimension formula for finite quiver Hecke algebras using combinatorics of Young walls
%in Section \ref{Sec: dimension formula}.
The following proposition will be used crucially in proving the graded dimension formula.

\begin{Prop} \label{Prop: graded dim}
Let $\nu = (\nu_1, \ldots, \nu_n), \nu' = (\nu_1', \ldots, \nu_n') \in I^\beta$.
\begin{enumerate}
\item Let $v_{\Lambda}$ be the highest weight vector of the highest weight $U_q(\g)$-module $V(\Lambda)$. Then, we have
$$ e_{\nu_1} \cdots e_{\nu_n} f_{\nu_n'} \cdots f_{\nu_1'} v_{\Lambda} = q^{- \df (\Lambda, \beta) } \left(\dim_q e(\nu)R^{\Lambda}(\beta) e(\nu') \right) v_{\Lambda}. $$
\item
Let $- : \A \rightarrow \A$ be the homomorphism defined by $\overline{q} = q^{-1}$. Then
%$q^{- \df (\Lambda, \beta) } \left(\dim_q e(\nu)R^{\Lambda}(\beta) e(\nu') \right)$ is fixed under $-$, i.e.,
$$\overline{q^{- \df (\Lambda, \beta) } \left(\dim_q e(\nu)R^{\Lambda}(\beta) e(\nu') \right)} = q^{- \df (\Lambda, \beta) } \left(\dim_q e(\nu)R^{\Lambda}(\beta) e(\nu') \right)$$

\end{enumerate}
\end{Prop}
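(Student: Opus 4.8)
The plan is to deduce both parts from the categorification theorem (Theorem \ref{Thm: categorification thm}), by translating the module-theoretic quantity $\dim_q e(\nu)R^\Lambda(\beta)e(\nu')$ into a matrix coefficient of the $U_q(\g)$-action on $V(\Lambda)$. Under the isomorphism $[\proj(R^\Lambda)]\cong V_\A(\Lambda)$, the highest weight vector $v_{\Lambda}$ corresponds to the class $[R^\Lambda(0)]=[\bR]$, and the Chevalley operators $e_i,f_i$ are realized by the functors $E_i,F_i$. The whole argument then takes place inside $[\proj(R^\Lambda)]$, so a preliminary point to record is that $E_i$ and $F_i$ indeed preserve the categories of finitely generated projective graded modules; this is part of the categorification package underlying Theorem \ref{Thm: categorification thm}.

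For part (1), I would first handle the $F$-side. Since $F_i$ carries no grading shift and $R^\Lambda(0)=\bR$, a direct induction using transitivity of induction gives $F_{\nu_n'}\cdots F_{\nu_1'}(\bR)\cong R^\Lambda(\beta)e(\nu')$ as graded projective modules, whence $f_{\nu_n'}\cdots f_{\nu_1'}v_{\Lambda}=[R^\Lambda(\beta)e(\nu')]$ with no power of $q$. I would then apply the $E$'s in the order dictated by $e_{\nu_1}\cdots e_{\nu_n}$ (rightmost first, so $E_{\nu_n}$ acts first). Each $E_{\nu_k}$ multiplies by the idempotent selecting residue $\nu_k$ in the appropriate position and introduces the shift $q_{\nu_k}^{1-\langle h_{\nu_k},\Lambda-\beta^{(k)}\rangle}$, where $\beta^{(k)}=\alpha_{\nu_1}+\cdots+\alpha_{\nu_{k-1}}$ is the target weight of the $k$th step. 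After all $n$ steps one lands in $\fmod(R^\Lambda(0))$, the underlying graded vector space being exactly $e(\nu)R^\Lambda(\beta)e(\nu')$, whose class in $[\proj(\bR)]\cong\A$ is $\dim_q e(\nu)R^\Lambda(\beta)e(\nu')$ times $v_{\Lambda}$. It remains to check that the accumulated shift is $q^{-\df(\Lambda,\beta)}$. Writing $q_i=q^{\mathsf{d}_i}$, the total exponent is $\sum_{k=1}^n \mathsf{d}_{\nu_k}\bigl(1-\langle h_{\nu_k},\Lambda-\beta^{(k)}\rangle\bigr)$, and using $(\alpha_i\mid\Lambda)=\mathsf{d}_i\langle h_i,\Lambda\rangle$, $(\alpha_i\mid\alpha_j)=\mathsf{d}_i a_{ij}$ and $\mathsf{d}_i=\tfrac12(\alpha_i\mid\alpha_i)$ one finds that the diagonal terms assemble into $\tfrac12(\beta\mid\beta)$, the $\Lambda$-terms give $-(\beta\mid\Lambda)$, and the off-diagonal terms cancel, leaving
\[
\sum_{k=1}^n \mathsf{d}_{\nu_k}\bigl(1-\langle h_{\nu_k},\Lambda-\beta^{(k)}\rangle\bigr)=\tfrac12(\beta\mid\beta)-(\beta\mid\Lambda)=-\df(\Lambda,\beta).
\]

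For part (2), I would argue formally from part (1). Set $x:=e_{\nu_1}\cdots e_{\nu_n} f_{\nu_n'}\cdots f_{\nu_1'}v_{\Lambda}$. Its weight is $\Lambda-\sum_k\alpha_{\nu_k'}+\sum_k\alpha_{\nu_k}=\Lambda$, so $x$ lies in the one-dimensional weight space $\Q(q)v_{\Lambda}$, and by part (1) we have $x=c\,v_{\Lambda}$ with $c=q^{-\df(\Lambda,\beta)}\dim_q e(\nu)R^\Lambda(\beta)e(\nu')$. Now invoke Lusztig's bar involution: the $\Q$-algebra automorphism $\overline{\phantom{x}}\colon U_q(\g)\to U_q(\g)$ determined by $\overline{q}=q^{-1}$, $\overline{e_i}=e_i$, $\overline{f_i}=f_i$ and $\overline{q^h}=q^{-h}$, which extends to a $\overline{\phantom{x}}$-semilinear involution on $V(\Lambda)$ fixing $v_{\Lambda}$ and satisfying $\overline{u\cdot y}=\overline{u}\cdot\overline{y}$. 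Since the operator $e_{\nu_1}\cdots e_{\nu_n} f_{\nu_n'}\cdots f_{\nu_1'}$ involves no Cartan part and is therefore bar-invariant, we get $\overline{x}=x$; comparing $\overline{x}=\overline{c}\,v_{\Lambda}$ with $x=c\,v_{\Lambda}$ yields $\overline{c}=c$, which is exactly the assertion.

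The main obstacle is the grading-shift bookkeeping in part (1): one must fix the order in which the $E_{\nu_k}$ are applied, keep track of the intermediate target weights $\beta^{(k)}$, and verify that the telescoping sum of the exponents $\mathsf{d}_{\nu_k}(1-\langle h_{\nu_k},\Lambda-\beta^{(k)}\rangle)$ collapses to $-\df(\Lambda,\beta)$ via the identities above. Everything else is formal: once part (1) is in place, the weight argument together with the bar involution makes part (2) a short consequence.
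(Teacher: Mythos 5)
Your proposal is correct and follows essentially the same route as the paper's proof: apply the functors $F_{\nu_1'},\ldots,F_{\nu_n'}$ and then $E_{\nu_n},\ldots,E_{\nu_1}$ to $R^\Lambda(0)$, track the grading shifts step by step (the paper's exponents $\mathrm{t}_k$ are exactly your $\mathsf{d}_{\nu_k}\bigl(1-\langle h_{\nu_k},\Lambda-\beta^{(k)}\rangle\bigr)$, since $\Lambda-\beta+\alpha_{\nu_n}+\cdots+\alpha_{\nu_k}=\Lambda-\beta^{(k)}$), verify the telescoping sum equals $-\df(\Lambda,\beta)$, invoke the categorification theorem, and deduce (2) from the bar-invariance of $e_{\nu_1}\cdots e_{\nu_n}f_{\nu_n'}\cdots f_{\nu_1'}v_{\Lambda}$, exactly as the paper does. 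The only quibble is the phrase that the off-diagonal terms ``cancel'': they do not cancel but combine with the terms $\tfrac12(\alpha_{\nu_k}\mid\alpha_{\nu_k})$ to assemble $\tfrac12(\beta\mid\beta)$, though the displayed identity you state is itself correct.
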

\begin{proof}
(1) Note that $\beta = \alpha_{\nu_1} + \cdots + \alpha_{\nu_n} = \alpha_{\nu'_1} + \cdots + \alpha_{\nu'_n}$.
For $k=1, \ldots, n$, let
$$ \mathrm{t}_k =  \frac{1}{2} (\alpha_{\nu_k}| \alpha_{\nu_k})
 - (\alpha_{\nu_k} | \Lambda-\beta + \alpha_{\nu_n} + \alpha_{\nu_{n-1}} + \cdots + \alpha_{\nu_k} ). $$
Then, we have
\begin{align*}
\mathrm{t}_1 + \cdots + \mathrm{t}_n &=  \frac{1}{2}\sum_{k=1}^n (\alpha_{\nu_k}| \alpha_{\nu_k}) - \sum_{k=1}^n
\left( (\alpha_{\nu_k} | \Lambda-\beta ) + (\alpha_{\nu_k} | \alpha_{\nu_n} + \alpha_{\nu_{n-1}} + \cdots + \alpha_{\nu_k} ) \right)\\
&= - \sum_{k=1}^n (\alpha_{\nu_k} | \Lambda-\beta ) -  \sum_{k=1}^n \left(  \frac{1}{2}(\alpha_{\nu_k}| \alpha_{\nu_k}) + (\alpha_{\nu_k} | \alpha_{\nu_n} + \alpha_{\nu_{n-1}} + \cdots + \alpha_{\nu_{k+1}} ) \right) \\
&= -(\beta | \Lambda-\beta ) -  \frac{1}{2}\sum_{k,l = 1, \ldots, n } (\alpha_{\nu_k}| \alpha_{\nu_l})\\
&= -(\beta | \Lambda ) +  \frac{1}{2} (\beta| \beta)\\
&= - \df(\Lambda, \beta) .
\end{align*}
Therefore, it follows from $ q_i^{1 - \langle h_i, \Lambda - \gamma \rangle} = q^{(\alpha_i| \alpha_i)/2 - ( \alpha_i| \Lambda - \gamma )} $ that
\begin{align*}
E_{\nu_1} \cdots E_{\nu_n} F_{\nu_n'} \cdots F_{\nu_1'} R^\Lambda(0) & =  E_{\nu_1} \cdots E_{\nu_n}  R^\Lambda(\beta) e(\nu') \\
&=  q^{\mathrm{t}_n} E_{\nu_1} \cdots E_{\nu_{n-1}} e(\beta-\alpha_{\nu_n}, \nu_n) R^\Lambda(\beta) e(\nu') \\
& \qquad \qquad \qquad \vdots \\
&=  q^{\mathrm{t}_1 + \cdots + \mathrm{t}_n} e(\nu) R^\Lambda(\beta) e(\nu')\\
&=  q^{-\df(\Lambda, \beta)} e(\nu) R^\Lambda(\beta) e(\nu'),
\end{align*}
which implies the assertion by Theorem \ref{Thm: categorification
thm}.

(2) The assertion follows immediately from (1) and the fact that
$e_{\nu_1} \cdots e_{\nu_n} f_{\nu_n'} \cdots f_{\nu_1'}
v_{\Lambda}$ is bar-invariant.
\end{proof}

\vskip 1em

\section{standard tableaux for proper Young walls}

\subsection{Standard tableaux} \label{Subsec: standard tableaux}
Let $Y \in \mathcal{Z}(\Lambda_0)$ be a proper Young wall of type $\tyX$ and let $n = |Y|$.
%The {\it associated partition} $\lambda_Y = ( |y_0|,|y_1|,|y_2|,\ldots )$ of $Y$ is an {\it $h$-strict} partition, i.e. $h \mid |y_k|$
%whenever $|y_k| = |y_{k+1}|$ for all $k$, where
%\begin{align} \label{Eq: h strict}
%h  = \left\{
%         \begin{array}{ll}
%           2\ell+1 & \hbox{ if } \tyX = A_{2\ell}^{(2)}, \\
%           \ell+1 & \hbox{ if } \tyX = D_{\ell+1}^{(2)}.
%         \end{array}
%       \right.
%\end{align}
%
%\ber We set \begin{align} \label{Eq: number of h parts}??  Y_h =
%(\ldots, y_{i_2}, y_{i_1}, y_{i_0}) \   \text{ for all }  \ i_k \
%\text{ such that } \  h \mid |y_{i_k}| .\end{align} \er
%
% Note that ordinary partitions are $1$-strict and ordinary
%strict partitions can be understood as $\infty$-strict partitions.
%In this viewpoint, a proper Young wall of type $\tyX$ can be regarded as an
%intermediate partition between an ordinary partition and a strict
%partition. The following is the definition of standard tableaux for
%proper Young walls inspired from the definition of standard tableaux for
%partitions.

\begin{defn}  A {\it standard tableau} of shape $Y $ is a sequence of proper Young walls
$$ \emptyset = Y^{(0)} \subset Y^{(1)} \subset \cdots \subset Y^{(n-1)} \subset Y^{(n)} = Y, $$
where $|Y^{(k+1)}| - |Y^{(k)}| = 1$ for $k=0,1,\ldots,n-1$.
\end{defn}
\noindent
For a standard tableau $T = (Y^{(0)}, \ldots, Y^{(n)})$ of shape $Y$, let
$T_{\le k}$ (resp.\ $T_{< k}$) denote the standard tableau $(Y^{(0)}, \ldots, Y^{(k)})$ (resp.\ $(Y^{(0)}, \ldots, Y^{(k-1)})$)
consisting of the first $k+1$ (resp.\ $k$) proper Young walls of $T$. Let
$$ \ST(Y) = \text{ the set of all standard tableaux of shape $Y$. } $$

A standard tableau also can be described by labeling blocks with
numbers. For a standard tableau $T = (Y^{(0)}, \ldots, Y^{(n)})$ of
shape $Y$, if we label the block of $ Y^{(k)} \setminus Y^{(k-1)} $
with the number $k$ for all $k$, then the resulting is a proper
Young wall whose blocks are labeled with entries $1, 2, \ldots, n$.
For example, when $X = A_{4}^{(2)}$, if we have the following
standard tableau $T$
\begin{align*}
\xy
%(0,-12)*{};(6,-12)*{} **\dir{-}; (0,-9)*{};(6,-9)*{} **\dir{-};  (0,-12)*{}; (0,-9)*{} **\dir{-}; (6,-12)*{};(6,-9)*{} **\dir{-}; % T1
%(0,-11.5)*{};(6,-11.5)*{} **\dir{.}; (0,-11)*{};(6,-11)*{} **\dir{.};
%(0,-10.5)*{};(6,-10.5)*{} **\dir{.}; (0,-10)*{};(6,-10)*{} **\dir{.}; (0,-9.5)*{};(6,-9.5)*{} **\dir{.};
(3,-10)*{ \emptyset };
(11,-12)*{};(17,-12)*{} **\dir{-}; (11,-9)*{};(17,-9)*{} **\dir{-}; (11,-6)*{};(17,-6)*{} **\dir{-}; (11,-12)*{}; (11,-6)*{} **\dir{-}; (17,-12)*{};(17,-6)*{} **\dir{-}; % T2
(11,-11.5)*{};(17,-11.5)*{} **\dir{.}; (11,-11)*{};(17,-11)*{} **\dir{.};
(11,-10.5)*{};(17,-10.5)*{} **\dir{.}; (11,-10)*{};(17,-10)*{} **\dir{.}; (11,-9.5)*{};(17,-9.5)*{} **\dir{.};
(22,-12)*{};(28,-12)*{} **\dir{-}; (22,-9)*{};(28,-9)*{} **\dir{-}; (22,-6)*{};(28,-6)*{} **\dir{-}; (22,0)*{};(28,0)*{} **\dir{-}; (22,-12)*{}; (22,0)*{} **\dir{-}; (28,-12)*{};(28,0)*{} **\dir{-}; % T3
(22,-11.5)*{};(28,-11.5)*{} **\dir{.}; (22,-11)*{};(28,-11)*{} **\dir{.};
(22,-10.5)*{};(28,-10.5)*{} **\dir{.}; (22,-10)*{};(28,-10)*{} **\dir{.}; (22,-9.5)*{};(28,-9.5)*{} **\dir{.};
(33,-12)*{};(39,-12)*{} **\dir{-}; (33,-9)*{};(39,-9)*{} **\dir{-}; (33,-6)*{};(39,-6)*{} **\dir{-}; (33,0)*{};(39,0)*{} **\dir{-}; (33,6)*{};(39,6)*{} **\dir{-}; %T4
(33,-12)*{}; (33,6)*{} **\dir{-}; (39,-12)*{};(39,6)*{} **\dir{-};
(33,-11.5)*{};(39,-11.5)*{} **\dir{.}; (33,-11)*{};(39,-11)*{} **\dir{.};
(33,-10.5)*{};(39,-10.5)*{} **\dir{.}; (33,-10)*{};(39,-10)*{} **\dir{.}; (33,-9.5)*{};(39,-9.5)*{} **\dir{.};
(44,-12)*{};(56,-12)*{} **\dir{-}; (44,-9)*{};(56,-9)*{} **\dir{-}; (44,-6)*{};(56,-6)*{} **\dir{-}; (50,0)*{};(56,0)*{} **\dir{-}; (50,6)*{};(56,6)*{} **\dir{-}; %T5
(44,-12)*{}; (44,-6)*{} **\dir{-}; (50,-12)*{};(50,6)*{} **\dir{-}; (56,-12)*{};(56,6)*{} **\dir{-};
(44,-11.5)*{};(56,-11.5)*{} **\dir{.}; (44,-11)*{};(56,-11)*{} **\dir{.};
(44,-10.5)*{};(56,-10.5)*{} **\dir{.}; (44,-10)*{};(56,-10)*{} **\dir{.}; (44,-9.5)*{};(56,-9.5)*{} **\dir{.};
(61,-12)*{};(73,-12)*{} **\dir{-}; (61,-9)*{};(73,-9)*{} **\dir{-}; (61,-6)*{};(73,-6)*{} **\dir{-}; (67,0)*{};(73,0)*{} **\dir{-}; (67,6)*{};(73,6)*{} **\dir{-};  (67,12)*{};(73,12)*{} **\dir{-}; %T6
(61,-12)*{}; (61,-6)*{} **\dir{-}; (67,-12)*{};(67,12)*{} **\dir{-}; (73,-12)*{};(73,12)*{} **\dir{-};
(61,-11.5)*{};(73,-11.5)*{} **\dir{.}; (61,-11)*{};(73,-11)*{} **\dir{.};
(61,-10.5)*{};(73,-10.5)*{} **\dir{.}; (61,-10)*{};(73,-10)*{} **\dir{.}; (61,-9.5)*{};(73,-9.5)*{} **\dir{.};
(78,-12)*{};(90,-12)*{} **\dir{-}; (78,-9)*{};(90,-9)*{} **\dir{-}; (78,-6)*{};(90,-6)*{} **\dir{-}; (78,0)*{};(90,0)*{} **\dir{-}; (84,6)*{};(90,6)*{} **\dir{-};  (84,12)*{};(90,12)*{} **\dir{-}; %T7
(78,-12)*{}; (78,0)*{} **\dir{-}; (84,-12)*{};(84,12)*{} **\dir{-}; (90,-12)*{};(90,12)*{} **\dir{-};
(78,-11.5)*{};(90,-11.5)*{} **\dir{.}; (78,-11)*{};(90,-11)*{} **\dir{.};
(78,-10.5)*{};(90,-10.5)*{} **\dir{.}; (78,-10)*{};(90,-10)*{} **\dir{.}; (78,-9.5)*{};(90,-9.5)*{} **\dir{.};
(95,-12)*{};(107,-12)*{} **\dir{-}; (95,-9)*{};(107,-9)*{} **\dir{-}; (95,-6)*{};(107,-6)*{} **\dir{-}; (95,0)*{};(107,0)*{} **\dir{-}; (101,6)*{};(107,6)*{} **\dir{-};  (101,12)*{};(107,12)*{} **\dir{-};  %T8
(101,15)*{};(107,15)*{} **\dir{-};
(95,-12)*{}; (95,0)*{} **\dir{-}; (101,-12)*{};(101,15)*{} **\dir{-}; (107,-12)*{};(107,15)*{} **\dir{-};
(95,-11.5)*{};(107,-11.5)*{} **\dir{.}; (95,-11)*{};(107,-11)*{} **\dir{.};
(95,-10.5)*{};(107,-10.5)*{} **\dir{.}; (95,-10)*{};(107,-10)*{} **\dir{.}; (95,-9.5)*{};(107,-9.5)*{} **\dir{.};
(112,-12)*{};(130,-12)*{} **\dir{-}; (112,-9)*{};(130,-9)*{} **\dir{-}; (112,-6)*{};(130,-6)*{} **\dir{-}; (118,0)*{};(130,0)*{} **\dir{-}; (124,6)*{};(130,6)*{} **\dir{-};  (124,12)*{};(130,12)*{} **\dir{-};  %T8
(124,15)*{};(130,15)*{} **\dir{-};
(112,-12)*{}; (112,-6)*{} **\dir{-}; (118,-12)*{};(118,0)*{} **\dir{-}; (124,-12)*{};(124,15)*{} **\dir{-}; (130,-12)*{};(130,15)*{} **\dir{-};
(112,-11.5)*{};(130,-11.5)*{} **\dir{.}; (112,-11)*{};(130,-11)*{} **\dir{.};
(112,-10.5)*{};(130,-10.5)*{} **\dir{.}; (112,-10)*{};(130,-10)*{} **\dir{.}; (112,-9.5)*{};(130,-9.5)*{} **\dir{.};
(5,-12)*{ , }; (19,-12)*{ , }; (30,-12)*{ , }; (41,-12)*{ , }; (58,-12)*{ , }; (75,-12)*{ , };
(92,-12)*{ , }; (109,-12)*{ , }; (132,-12)*{ , };
\endxy
\end{align*}
then the expression of $T$ using numbers is give as follows:
\begin{align} \label{Ex: tableau1}
\xy
(103,0)*{ T =  };
(112,-12)*{};(130,-12)*{} **\dir{-}; (112,-9)*{};(130,-9)*{} **\dir{-}; (112,-6)*{};(130,-6)*{} **\dir{-}; (118,0)*{};(130,0)*{} **\dir{-}; (124,6)*{};(130,6)*{} **\dir{-};  (124,12)*{};(130,12)*{} **\dir{-};  %T8
(124,15)*{};(130,15)*{} **\dir{-};
(112,-12)*{}; (112,-6)*{} **\dir{-}; (118,-12)*{};(118,0)*{} **\dir{-}; (124,-12)*{};(124,15)*{} **\dir{-}; (130,-12)*{};(130,15)*{} **\dir{-};
(112,-11.5)*{};(130,-11.5)*{} **\dir{.}; (112,-11)*{};(130,-11)*{} **\dir{.};
(112,-10.5)*{};(130,-10.5)*{} **\dir{.}; (112,-10)*{};(130,-10)*{} **\dir{.}; (112,-9.5)*{};(130,-9.5)*{} **\dir{.};
(115.3,-7.5)*{ _8 };(121.3,-7.5)*{ _4 }; (127.3,-7.5)*{ _1 };
(121.3,-3)*{ _6 }; (127.3,-3)*{ _2 };
(127.3,3)*{ _3 };
(127.3,9)*{ _5 };
(127.3,13.5)*{ _7 };
\endxy
\end{align}
We will use the above expression for standard tableaux if there is no confusion.

Just as usual standard tableaux for partitions, standard tableaux
for proper Young walls also can be defined by considering their
entries. A {\it tableau} $T$ of shape $Y$ is a labeling of blocks in
$Y$ with numbers, and let
$$\sh(T) = Y.$$
The {\it canonical tableau} $T^Y$ is the tableau in which the number $1,2, \ldots, n$ appear in order from bottom to top and from right to left.
For example, the following is a canonical tableau.
\begin{align} \label{Eq: canonical tableau}
\xy
(103,0)*{ T^Y =  };
(112,-12)*{};(130,-12)*{} **\dir{-}; (112,-9)*{};(130,-9)*{} **\dir{-}; (112,-6)*{};(130,-6)*{} **\dir{-}; (118,0)*{};(130,0)*{} **\dir{-}; (124,6)*{};(130,6)*{} **\dir{-};  (124,12)*{};(130,12)*{} **\dir{-};  %T8
(124,15)*{};(130,15)*{} **\dir{-};
(112,-12)*{}; (112,-6)*{} **\dir{-}; (118,-12)*{};(118,0)*{} **\dir{-}; (124,-12)*{};(124,15)*{} **\dir{-}; (130,-12)*{};(130,15)*{} **\dir{-};
(112,-11.5)*{};(130,-11.5)*{} **\dir{.}; (112,-11)*{};(130,-11)*{} **\dir{.};
(112,-10.5)*{};(130,-10.5)*{} **\dir{.}; (112,-10)*{};(130,-10)*{} **\dir{.}; (112,-9.5)*{};(130,-9.5)*{} **\dir{.};
(115.3,-7.5)*{ _8 };(121.3,-7.5)*{ _6 }; (127.3,-7.5)*{ _1 };
(121.3,-3)*{ _7 }; (127.3,-3)*{ _2 };
(127.3,3)*{ _3 };
(127.3,9)*{ _4 };
(127.3,13.5)*{ _5 };
\endxy
\end{align}
Note that canonical tableaux are standard.
For a tableau $T$, $i\in \Z_{\ge 0}$, $j\in \Z_{> 0}$, let
 $$T(i,j)\ = \ \text{the entry of the block at $(i,j)$-position in $T$},$$
where $(i,j)$-position is the position in $\sh(T)$ at the $i$th column and the $j$th row.
If there is no block at $(i,j)$-position, we set $T(i,j) = \infty $.
For example, if $T$ is the tableau of $\eqref{Ex: tableau1}$, then $T(0,1) = 1$, $T(0,4) = 5$, and $T(2,1) = 8$.
The following is another description for standard tableaux. Since the proof is straightforward, we leave it to the reader.

\begin{Prop} \label{Prop: standard}
Let $Y \in \mathcal{Z}(\Lambda_0)$ be a proper Young wall and let $T$ be a tableau of shape $Y$. $T$ is standard if and only if, for all possible $i \in \Z_{\ge0},\ j\in \Z_{>0}$,
\begin{align*} % \label{Eq: standard}
T(i,j) < T(i,j+1),\quad T(i,j) < T(i+1,j-\delta(i,j)) ,
\end{align*}
where, if $b$ is the block of $Y$ at $(i,j)$-position, then
$$\delta(i,j) = \left\{
                     \begin{array}{ll}
                       0 & \hbox{ if } \res(b) = \res_-(b), \\
                       1 & \hbox{ otherwise.}
                     \end{array}
                   \right.
$$
%$\delta(k) = 0$ if $k \equiv 0 $ modulo $h$, and $\delta(k) = 1$ otherwise.
\end{Prop}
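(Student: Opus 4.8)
The plan is to reduce standardness to a purely local, geometric statement about which sub-configurations of $Y$ are again proper Young walls, and then to read off the two inequalities. Write $n=|Y|$ and, for a tableau $T$ of shape $Y$ and $0\le k\le n$, set $S_k=\{b\in Y : T(b)\le k\}$. Directly from the definition, $T$ is standard if and only if every $S_k$ is a proper Young wall: the subsets $S_0\subset S_1\subset\cdots\subset S_n=Y$ are nested with $|S_k|-|S_{k-1}|=1$, so such a chain is exactly a standard tableau, and conversely the $Y^{(k)}$ of a standard tableau are the $S_k$. Since the $S_k$ are nested and the entries of $T$ are distinct, a family of ``closure'' conditions holds for all $S_k$ simultaneously if and only if the corresponding strict inequalities hold between the entries of $T$; this is the usual threshold observation, namely that a block $b$ first enters the chain at stage $k=T(b)$.

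First I would isolate the geometric core as a lemma: for the fixed proper Young wall $Y$, a subset $S$ of the blocks of $Y$ is itself a proper Young wall if and only if it satisfies (V) closure downwards in each column, i.e.\ $(i,j)\in S$ and $j\ge 2$ imply $(i,j-1)\in S$; and (H) closure under the adjacent-column \emph{support} relation, i.e.\ whenever $(i+1,r)\in S$ and $(i,j)$ is a block of $Y$ with $j-\delta(i,j)=r$, then $(i,j)\in S$. Granting the lemma, requiring (V) for all $S_k$ is equivalent to $T(i,j)<T(i,j+1)$, and requiring (H) for all $S_k$ is equivalent to $T(i,j)<T(i+1,j-\delta(i,j))$ (with the convention $T=\infty$ off the shape, so that a position which is empty in $Y$ imposes no constraint). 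These are precisely the two asserted inequalities, so the proposition follows from the lemma and the threshold observation.

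The content of the lemma is the identification of the support relation. Condition (V) is immediate, as a Young wall is built on the ground with no vertical gaps. For (H) the point is to determine, for a block $c=(i+1,r)$ in the left column, exactly which blocks of the right column $i$ must be present in any proper Young wall containing $c$, and to check these are precisely the $(i,j)$ with $j-\delta(i,j)=r$. Here one uses that every column of $Y$ carries the same vertical residue pattern, so row $j$ has a residue independent of the column, and that $\delta$ records the half-unit blocks: at a full block, and at the lower half of a half-block pair, one has $\res(b)\ne\res_-(b)$, so $\delta(i,j)=1$ and $c$ rests against the block one row higher; at the upper half of a half-block pair (and at the bottom block, which rests against the residue-$0$ ground) one has $\res(b)=\res_-(b)$, so $\delta(i,j)=0$ and $c$ rests against the block in the same row. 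I expect this case analysis over the block types, the full blocks, the half-height $0$-blocks, and, in type $D^{(2)}_{\ell+1}$, the half-height $\ell$-blocks as well, to be the main, though elementary, obstacle.

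Finally, and perhaps the only genuinely surprising point, I would verify that (V) and (H) already force properness of $S$, so that no separate clause is needed. Suppose $S$ satisfies (V) and (H) but has two adjacent full columns $i$ and $i+1$ of equal height, with common top row $j$. Since $Y$ is proper and $S\subseteq Y$ is column-closed, the position $(i,j+1)$ is a block of $Y$; because column $i$ is full, its top row $j$ is not the lower half of a half-pair, whence $\delta(i,j+1)=1$, so the support relation pairs the block $(i,j+1)$ with $(i+1,j)$. But $(i+1,j)\in S$ while $(i,j+1)\notin S$, contradicting (H). Hence $S$ is proper, and the reverse implication (a proper Young wall satisfies (V) and (H)) is just column-closedness together with right-justification. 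Assembling the lemma with the threshold reduction of the first paragraph then yields the proposition.
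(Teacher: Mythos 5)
Your skeleton --- reduce standardness to ``every prefix $S_k=\{b\in Y: T(b)\le k\}$ is a proper Young wall,'' then characterize which block-subsets of $Y$ are proper Young walls --- is the natural one, and since the paper leaves this proof to the reader it is surely the intended route. The genuine gap sits in the lemma that carries all of the content: you \emph{assert}, rather than derive, the support relation (H), namely that a block $c=(i+1,r)$ ``rests against the block one row higher,'' $(i,r+1)$, whenever $\delta(i,r+1)=1$. Nothing in the paper's stated building rules yields this. Under the literal reading of ``no free space to the right of any block'' in the half-unit-height pictures (every column carries the same pattern, so row $r$ occupies the same height interval in all columns), a block in column $i+1$, row $r$, only ever needs the same-height block $(i,r)$, never the one above it. With that reading your lemma --- and the proposition itself --- are false: in type $A_4^{(2)}$ the chain $\emptyset\subset\{(0,1)\}\subset\{(0,1),(1,1)\}\subset\{(0,1),(1,1),(0,2)\}$ consists of walls satisfying the stated rules, and satisfying the stated properness condition vacuously (no column in the chain has height a multiple of the unit), so it would be a standard tableau of the legitimate shape with column lengths $(2,1)$; yet it has $T(0,2)=3>2=T(1,1)$, violating the inequality $T(0,2)<T(1,1)$ forced by $\delta(0,2)=1$. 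The actual reason $\{(0,1),(1,1)\}$ is \emph{not} a proper Young wall is exactly the fact your proof needs and never establishes: proper Young walls of type $A^{(2)}_{2\ell}$ (resp.\ $D^{(2)}_{\ell+1}$) are precisely the walls whose column lengths $(|y_0|,|y_1|,\dots)$ form an $h$-strict partition, $h=2\ell+1$ (resp.\ $h=\ell+1$). That identification (due to Kang--Kwon, and quoted in the paper's introduction) is the input from which (H), and then the proposition, follow by routine translation; a complete proof must either quote it or re-derive it from Kang's actual block conventions, and your ``rests against one row higher'' is that statement in disguise, not an elementary consequence of the rules as given.

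Your final properness paragraph shows the same problem in sharper form. You argue ``because column $i$ is full, its top row $j$ is not the lower half of a half-pair, whence $\delta(i,j+1)=1$.'' Against the paper's literal definition of a full column (height a multiple of the unit length) this is backwards: in these patterns a column's height is an integer precisely when its top block \emph{is} the lower half of a pair, i.e.\ when $h$ divides its number of blocks; in that case $\delta(i,j+1)=0$ and (V)+(H) impose nothing --- correctly so, because two adjacent equal columns of that kind are genuinely allowed (the paper's own $Y_3$, with two adjacent columns of $5$ blocks in type $A_4^{(2)}$, is declared proper, even though both columns have height $4$). Your step is valid only for Kang's original notion of full column (top of unit thickness, equivalently block-count not divisible by $h$), which is what the paper must in fact mean, since its literal definition contradicts its own examples; but as written against the definitions you were handed, the step fails, and it is a second symptom of the missing $h$-strict characterization. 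So: right architecture, correct final relation, but the one nontrivial geometric fact is assumed where it needed to be proved or cited.
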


For a proper Young wall $Y = (y_k)_{k=0}^\infty \in \mathcal{Z}(\Lambda_0)$, the {\it associated partition}
$\lambda_Y = ( |y_0|,|y_1|,|y_2|,\ldots )$ of $Y$  is an $h$-strict partition,
where
\begin{align} \label{Eq: h strict}
h  = \left\{
         \begin{array}{ll}
           2\ell+1 & \hbox{ if } \tyX = A_{2\ell}^{(2)}, \\
           \ell+1 & \hbox{ if } \tyX = D_{\ell+1}^{(2)}.
         \end{array}
       \right.
\end{align}
In the cases of $h=1$, $\infty$,
Proposition \ref{Prop: standard} may give a definition for usual standard tableaux of Young diagrams and shifted Young diagrams.
The case $h=1$ may be viewed as the case $\ell = 0$. In that case, all blocks have half-unit height and all residue of Young walls are $0$, so $\delta(i,j) = 0$ for all $i,j$.
Thus, entries in rows and columns of a standard tableau of $Y$ increase from right to left and from bottom to top, respectively.
The case $h=\infty$ may be regarded as the case $\ell = \infty$. In that case, all residue in each column are different, so $\delta(i,j) = 1$ for all $i,j$.
If, for each $k$, we move up the $k$-column of a standard tableau of shape $Y$ by $k$, then the resulting
 tableau is a tableau of a shifted Young diagram. Moreover, entries in rows and columns of the resulting tableau increase from right and left and from bottom to top, respectively.

Let $Y$ be a proper Young wall. If the associated partition $\lambda_Y$ of $Y$ is strict, then
we define $\ST_\infty(Y)$ to be the set of all standard tableaux of shape $Y$ such that
\begin{align} \label{Eq: def of STinf}
T(i,j) < T(i,j+1),\quad T(i,j) < T(i+1,j-1)  \quad \text{ for all possible }i,j.
\end{align}
We set $\ST_\infty(Y) =\emptyset$ if the associated partition of $Y$ is not strict.
By definition, we have
\begin{align*}
\ST_\infty(Y) \subset \ST(Y).
\end{align*}
Moreover, if the associated partition $\lambda_Y$ is strict, then
the set $\ST_\infty(Y)$ is bijective to the set of all standard tableaux of shape $\lambda_Y$ via the correspondence for $h=\infty$ in the above paragraph.

%The set $\ST_\infty(Y)$ will be used in the last section for recovering the dimension formulas \cite{AP12, AP13}
%described in terms of standard tableaux for strict partitions.

%\ber
%\begin{Rmk} \label{Rmk: strict tab} For every strict partition $\lambda$ can be considered
%as a $h$-strict partition. Moreover, the standard tableaux on strict
%partition can be also considered as a standard tableaux of a
%$h$-strict partition; i.e., if we denoted by $\ST_\infty(Y)$ as the
%set of $\infty$-standard tableau of Young wall $Y$ whose associated
%partition is strict, then
%\begin{align}
%\ST_\infty(Y) \subsetneq \ST(Y).
%\end{align}
%We set $\ST_\infty(Y) =\emptyset$ whenever associated partition of
%$Y$ is not a strict partition.
%\end{Rmk}
%\er

 \subsection{Weak order on $\ST(Y)$}
Let $\sg_n$ be the symmetric group of permutations of $\{1,2, \ldots, n \}$, which is generated by simple transposition $s_i := (i,i+1)$ for $i=1,\ldots, n-1$.
For $w\in \sg_n$, let $l(w)$ be the {\it length} of $w$, i.e., the number of simple transpositions in its reduced expression.
As a permutation $w \in \sg_n$ may be regarded as
the word $ (w(1) w(2) \cdots w(n) )$, we think of $\sg_n$ as the set of words with $n$ distinct letters $1,2, \ldots, n$.
Note that, for a permutation $w = (w_1 \cdots w_i w_{i+1} \cdots w_n)$,
%\begin{equation} \label{Eq: going up in weak order}
\begin{align}
ws_i &= (w_1 \cdots w_{i+1} w_i \cdots w_n), \label{Eq: going up in weak order 1} \\
l(w s_i) &= l(w) + 1 \quad \text{if and only if} \quad  w_i <
w_{i+1}. \label{Eq: going up in weak order}
\end{align}
%\end{equation}
For $u,w \in \sg_n$, we say $u < w$ in {\it weak order} if there
exist $s_{i_1}, s_{i_2}, \ldots, s_{i_t} $ such that
$$l(s_{i_k}\cdots s_{i_2}s_{i_1} u ) = l(u) + k  \ \ (1 \le k \le t),  \quad \quad w = s_{i_t}\cdots s_{i_1} u.  $$
For a subset $V \subset \sg_n$, the set
$$ \sg_n/V = \{ w\in \sg_n \mid l(wv) = l(w) + l(v) \text{ for all } v\in V  \} $$
is called a {\it generalized quotient} for $\sg_n$.
%The notion of generalized quotients for Coxeter groups was introduced in \cite{BW88},
%which a generalization for distinguished coset representatives of parabolic subgroups of Coxeter groups.
\begin{Thm} \cite[Thm.4.1]{BW88} \label{Thm: BW88}
Let $\sg_n/V$ be a generalized quotient.
\begin{enumerate}
\item There exists a permutation $v\in \sg_n$ such that
$$\sg_n/V = \{ w\in \sg_n \mid \mathrm{id} \le w \le v \} .$$
\item $\sg_n/V$ is a lattice under the weak order.
\end{enumerate}
\end{Thm}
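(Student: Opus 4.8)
The plan is to translate both assertions into statements about inversion sets, exploiting the classical fact that the weak order of the theorem coincides with containment of inversion sets and makes $\sg_n$ a finite lattice. For $w \in \sg_n$ write $\mathrm{Inv}(w) = \{(a,b) : 1 \le a < b \le n,\ w(a) > w(b)\}$, so that $l(w) = |\mathrm{Inv}(w)|$. By $\eqref{Eq: going up in weak order}$ and an easy induction, $u \le w$ in the weak order if and only if $\mathrm{Inv}(u) \subseteq \mathrm{Inv}(w)$; moreover $(\sg_n, \le)$ is a lattice with bottom $\mathrm{id}$ and top the longest element $w_0$, and complementation $\mathrm{Inv}(w_0 w) = \binom{[n]}{2} \setminus \mathrm{Inv}(w)$ holds. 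I would take these facts as standard input.

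The first and decisive step is to analyze a single quotient $\sg_n/\{v\}$. The key is the length identity
$$ l(wv) = |\,\mathrm{Inv}(w) \bigtriangleup \mathrm{Inv}(v^{-1})\,|, $$
which I would obtain by running over the unordered pairs of values $\{a,b\}$: such a pair is inverted by $wv$ precisely when exactly one of the conditions ``$v^{-1}$ reverses $\{a,b\}$'' and ``$w$ reverses $\{a,b\}$'' holds, i.e. when the pair lies in the symmetric difference of $\mathrm{Inv}(v^{-1})$ and $\mathrm{Inv}(w)$. Consequently $w \in \sg_n/\{v\}$, i.e. $l(wv) = l(w) + l(v)$, if and only if $\mathrm{Inv}(w) \cap \mathrm{Inv}(v^{-1}) = \emptyset$, equivalently $\mathrm{Inv}(w) \subseteq \binom{[n]}{2} \setminus \mathrm{Inv}(v^{-1}) = \mathrm{Inv}(w_0 v^{-1})$. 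Hence
$$ \sg_n/\{v\} = \{\, w \in \sg_n : \mathrm{Inv}(w) \subseteq \mathrm{Inv}(w_0 v^{-1}) \,\} = [\,\mathrm{id},\, w_0 v^{-1}\,], $$
a principal lower interval in the weak order.

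The remaining steps are then formal. For assertion (1), since $\sg_n/V = \bigcap_{u \in V} \sg_n/\{u\} = \bigcap_{u \in V} [\mathrm{id}, w_0 u^{-1}]$, and an intersection of principal lower intervals in a lattice is again a principal lower interval (namely $[\mathrm{id}, \bigwedge_{u \in V} w_0 u^{-1}]$, the meet existing because $\sg_n$ is finite), we obtain the desired $v := \bigwedge_{u \in V} w_0 u^{-1}$. For assertion (2), I would invoke the general fact that any interval $[a,b]$ in a lattice is itself a lattice: for $x,y \in [a,b]$ the ambient join satisfies $a \le x \vee y \le b$ because $b$ is an upper bound of $\{x,y\}$, and dually for $x \wedge y$. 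The only genuinely substantive content is the length identity of the first step (together with the complementation property $\mathrm{Inv}(w_0 w) = \binom{[n]}{2}\setminus\mathrm{Inv}(w)$ it relies on), so I expect that step to carry all the weight; once $\sg_n/\{v\}$ is recognized as a lower interval, the passage to arbitrary $V$ and the lattice structure are immediate consequences of the lattice property of the weak order.
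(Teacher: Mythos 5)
Your proposal is correct, but note that the paper itself contains no proof of this statement: it is quoted verbatim from Bj\"orner--Wachs \cite[Thm.4.1]{BW88}, so there is no internal argument to compare against. What you have written is essentially a self-contained reconstruction of the classical symmetric-group argument (which is also how the cited source proceeds): the length identity $l(wv)=|\mathrm{Inv}(w)\,\triangle\,\mathrm{Inv}(v^{-1})|$ shows that $\sg_n/\{v\}$ is exactly the set of $w$ with $\mathrm{Inv}(w)\cap\mathrm{Inv}(v^{-1})=\emptyset$, complementation via $w_0$ identifies this with the lower interval $[\mathrm{id},w_0v^{-1}]$, and then part (1) follows by intersecting intervals (using finiteness of $\sg_n$ and the lattice property of weak order to form $\bigwedge_{u\in V}w_0u^{-1}$) and part (2) because any interval in a lattice is a lattice. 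Two small points of care, neither of which is a gap: first, your inputs (the inversion-set characterization of weak order and the lattice property of weak order on $\sg_n$) are genuinely prior classical facts, so no circularity arises; second, your position-based inversion sets are the right notion precisely because the paper's weak order is the \emph{left} weak order (generated by left multiplication $u\mapsto s_iu$), even though the equation $\eqref{Eq: going up in weak order}$ you cite concerns right multiplication --- your stated equivalence $u\le w\Leftrightarrow\mathrm{Inv}(u)\subseteq\mathrm{Inv}(w)$ is the correct one for the order actually defined in the paper, and it meshes correctly with the right-multiplication condition $l(wv)=l(w)+l(v)$ defining the generalized quotient.
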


We now define a partial order on $\ST(Y)$ induced from the weak order on $\sg_n$.
Let $Y$ be a proper Young wall with $n$ blocks.
For a tableau $T$ of shape $Y$, let $w(T)$ be the word obtained by reading entries of $T$ column by column from bottom to top and from right to left.
Since, for $T\in \ST(Y)$, $w(T)$ is a word of $n$ distinct letters $1,2, \ldots, n$,  it can be viewed as a permutation of $\sg_n$. Thus, the set
$$ w(\ST(Y)) = \{ w(T) \mid T \in \ST(Y)  \} $$
can be regarded as a subset of $\sg_n$.
For example, $w(T^Y) = (12\cdots n)$ and, if $T$ is the tableau $\eqref{Ex: tableau1}$, $w(T)$ is given as
$$ w(T) = (12357468) \in \sg_n. $$

For $T, T' \in \ST(Y)$, we define
\begin{align} \label{Eq: weak order}
T < T'
\end{align}
if $w(T) < w(T')$ in the weak order of $\sg_n$, which gives
a partial order on $\ST(Y)$. Note that the canonical tableau $T^Y$
is a unique smallest tableau in $\ST(Y)$. The following theorem is a
generalization of \cite[Thm.7.2]{BW88} to standard tableaux for
Young walls.

\begin{Thm} \label{Thm: weak order}
Let $Y$ be a proper Young wall with $n$ blocks.
\begin{enumerate}
\item The subset $w(\ST(Y))$ is a generalized quotient for $\sg_n$.
\item The poset $\ST(Y)$ is a graded poset with a unique maximum element.
\item The poset $\ST(Y)$ is a lattice.
\item For $T, T' \in \ST(Y)$ with $T<T'$, there exist simple transpositions $ s_{i_1}, s_{i_2}, \ldots, s_{i_t}$ such that,
$$ T' = s_{i_t} \cdots s_{i_1} T, \quad s_{i_j} \cdots s_{i_1} T \in \ST(Y) \text{ for all $1\le j \le t$}, $$
where $w T$ is the tableau obtained from $T$ by replacing entry $k$ with $w(k)$ for $k=1,\ldots,n$.
\end{enumerate}
\end{Thm}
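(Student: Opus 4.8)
The plan is to reinterpret $\ST(Y)$ as the set of linear extensions of a poset on the boxes of $Y$, and then to show that this set, read off through $w$, is cut out by a single inversion condition. Let $P_Y$ be the partial order $\prec$ on the $n$ boxes of $Y$ whose covering relations are exactly the two families of inequalities in Proposition \ref{Prop: standard}: the vertical relations $(i,j)\prec (i,j+1)$ and the horizontal relations $(i,j)\prec(i+1,j-\delta(i,j))$. By Proposition \ref{Prop: standard}, a tableau of shape $Y$ is standard precisely when it is a linear extension of $P_Y$. Reading the boxes in the order used to define $w$ (each column from bottom to top, columns from right to left) is itself a linear extension of $P_Y$, and it is the one attached to the canonical tableau, so $w(T^Y)=\mathrm{id}$. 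Writing $p_1,\dots,p_n$ for the boxes in this reading order and $D=\{(j,k): p_j\prec p_k\}$ for the comparability relation of $P_Y$ (so automatically $j<k$), a filling is a linear extension iff its reading word $w$ has no inversion among $D$-pairs; that is,
\[ w(\ST(Y)) = \{\, w\in\sg_n : \mathrm{Inv}(w)\cap D=\emptyset \,\}. \]

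First I would record the elementary identity $\ell(wv)=\ell(w)+\ell(v)\iff \mathrm{Inv}(w)\cap\mathrm{Inv}(v^{-1})=\emptyset$, which gives $\sg_n/V=\{w:\mathrm{Inv}(w)\cap\bigcup_{v\in V}\mathrm{Inv}(v^{-1})=\emptyset\}$. In view of the displayed description, part (1) reduces to showing that $D$ is the inversion set of a single permutation, i.e. that $D$ is \emph{biconvex}: it is transitively closed, and for $a<b<c$ the relation $(a,c)\in D$ forces $(a,b)\in D$ or $(b,c)\in D$. Transitivity is immediate since $D$ is a comparability relation. The second condition is the heart of the matter, and the point I expect to be the main obstacle. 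The two geometric facts I would exploit are that the boxes of a single column are totally ordered by the vertical relations, and that a covering relation changes the column index by $0$ (vertical) or $+1$ (horizontal); hence any saturated chain $p_a\prec\cdots\prec p_c$ visits every column between that of $p_a$ and that of $p_c$. Given $a<b<c$ with $p_a\prec p_c$, the box $p_b$ lies in one of those intermediate columns, so it is comparable to some chain box $q$ in its column; if $q\preceq p_b$ then $p_a\prec q\preceq p_b$, while if $p_b\preceq q$ then $p_b\preceq q\prec p_c$. Either way $(a,b)\in D$ or $(b,c)\in D$. Thus $D=\mathrm{Inv}(\sigma)$ for a unique $\sigma\in\sg_n$, and taking $V=\{\sigma^{-1}\}$ gives $w(\ST(Y))=\sg_n/V$, which proves (1).

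Once (1) is established, parts (2) and (3) follow from Theorem \ref{Thm: BW88}: the generalized quotient $w(\ST(Y))$ equals an interval $[\mathrm{id},v]$ in the weak order and is a lattice. Since $w$ is by construction an order isomorphism from $\ST(Y)$ onto $w(\ST(Y))$, and the weak order is graded by length, $\ST(Y)$ is a graded lattice whose unique minimum is $T^Y$ (as $w(T^Y)=\mathrm{id}$) and whose unique maximum is the tableau sent to $v$.

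For part (4), I would use that the interval $[\mathrm{id},v]$ is graded, so for $T<T'$ there is a saturated chain $w(T)=u_0\lessdot u_1\lessdot\cdots\lessdot u_t=w(T')$ in the weak order, with each step $u_j=s_{i_j}u_{j-1}$ of length $\ell(u_{j-1})+1$. Every $u_j$ satisfies $u_j\le w(T')\le v$, hence $u_j\in[\mathrm{id},v]=w(\ST(Y))$. Finally, swapping the entries $i$ and $i+1$ of a tableau transposes the letters $i,i+1$ of its reading word, i.e. $w(s_iT)=s_i\,w(T)$; therefore $u_j=w(s_{i_j}\cdots s_{i_1}T)$, so that $s_{i_j}\cdots s_{i_1}T\in\ST(Y)$ for all $j$ and $T'=s_{i_t}\cdots s_{i_1}T$, as required.
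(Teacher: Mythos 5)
Your proposal is correct, and it shares the paper's overall skeleton: both arguments prove part (1) by exhibiting a set $V$ with $w(\ST(Y)) = \sg_n/V$ and then invoke Theorem \ref{Thm: BW88} for parts (2)--(4), with the same bookkeeping ($w(s_iT)=s_i\,w(T)$, saturated chains staying inside the interval) for part (4). Where you genuinely diverge is in how $V$ is produced. The paper translates the two families of inequalities of Proposition \ref{Prop: standard} directly into length-additivity conditions: its $V$ consists of the simple transpositions $(k,k+1)$ with $k,k+1$ reading positions in a common column, together with explicitly constructed permutations $v_{ij}$ (products of adjacent transpositions) encoding the inter-column inequalities; the crux there is checking that joint length-additivity over this multi-element $V$ is equivalent to standardness. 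You instead treat $\ST(Y)$ as the set of linear extensions of the box poset $P_Y$, identify $w(\ST(Y))$ as the permutations whose inversion set is disjoint from the comparability relation $D$, and prove that $D$ is biconvex; your column-visiting argument along a chain is the substitute for the paper's explicit construction, and it is exactly where the Young-wall geometry (each generating relation shifts the column index by $0$ or $+1$) enters. This buys a slightly stronger structural conclusion -- the quotient is by a \emph{single} permutation $\sigma^{-1}$, so the interval description of $w(\ST(Y))$ is nearly immediate -- and it avoids verifying what length-additivity against the cycles $v_{ij}$ means; what the paper's route buys is a concrete, finite list of generators of the quotient, which is closer in spirit to parabolic quotients and usable for explicit computation. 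Two small points of care in your write-up, neither a gap: the relations of Proposition \ref{Prop: standard} need not all be covering relations of $P_Y$ (a horizontal relation can factor through a vertical one), but your argument only uses that every relation in $P_Y$ is a composite of generating relations each shifting the column by $0$ or $+1$; and the identity $\ell(wv)=\ell(w)+\ell(v)\iff \mathrm{Inv}(w)\cap\mathrm{Inv}(v^{-1})=\emptyset$ should be quoted with the inverse exactly where you put it, since the variant with $\mathrm{Inv}(w^{-1})\cap\mathrm{Inv}(v)$ is false.
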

\begin{proof}
For $i\in \Z_{\ge 0 },\ j \in \Z_{>0}$, if $Y$ has a block at $(i,j)$-position,
let $t_{ij}$ be the entry of the top block of the canonical tableau $T^Y$ in the $i$th column and
$$ o_{ij} = T^Y(i,j), \quad  l_{ij} = T^Y(i+1, j-\delta(i,j) ),$$
where $\delta(i,j)$ is given in Proposition \ref{Prop: standard}.
Note that $ o_{ij} \le t_{ij} \le l_{ij} $. For each $i,j$ with $l_{ij} \ne \infty$, let
\begin{align*}
v_{ij} = &(o_{ij} ,o_{ij} +1) (o_{ij}+1,o_{ij}+2) \cdots (t_{ij}-1,t_{ij}) \\
& \qquad (l_{ij}-1, l_{ij}) (l_{ij}-2, l_{ij}-1) \cdots (t_{ij}+1, t_{ij}+2) (t_{ij}, t_{ij}+1).
\end{align*}
Note that, for a tableau $T$, $w(T)v_{ij}$ is obtained from $w(T)$ by
first moving $ T(i,j)$ to the top block of the $i$th column, and
then moving $T(i+1, j-\delta(i,j))$ to the bottom block of $(i+1)$th
column, and lastly interchanging these two entries.
We now define
$$V = J \cup \{ v_{ij} \mid l_{ij} \ne \infty \}\subset \sg_n ,$$
where $J = \{ (k,k+1) \mid k = 1,2, \ldots, n-1,\ k \ne t_{ij} \ \  \text{ for all possible }i,j \}  $.
Let $T$ be a tableau of shape $Y$. By Proposition \ref{Prop: standard}$, \eqref{Eq: going up in
weak order 1}$ and $\eqref{Eq: going up in weak order}$, we have
\begin{align*}
T \in \ST(Y) \ \  \Leftrightarrow \ \
\left\{
  \begin{array}{ll}
    l(w(T)u) = l(w(T)) + l(u) & \text{ for all $u\in J$}, \\
    l(w(T)v_{ij}) =  l(w(T)) + l(v_{ij}) & \text{ for all $i, j$.}
  \end{array}
\right.
\end{align*}
Thus, $ w(\ST(Y)) = \{ w \in \sg_n \mid l(w v) = l(w) + l(v) \ \text{ for all } v\in V  \} ,$
which is a generalized quotient.

The remaining assertions follow from (1) and Theorem \ref{Thm: BW88}.
\end{proof}

\vskip 1em

\section{Laurent polynomials associated with proper Young walls  } \label{Sec: Laurent poly}

%In this section, we present Laurent polynomials associated with proper Young walls and their standard tableaux, and investigate properties among them.
%These Laurent polynomials are defined by using standard tableaux, which essentially come from the Fock space representation in Section \ref{Sec: Sec Fock space}.
%They also can be viewed as $A_{2\ell}^{(2)}$-type and $D_{\ell+1}^{(2)}$-type analogues of $q^{deg(T)}$ and $q^{-codeg(T)}$ which appear in graded dimension formulas
%for cyclotomic quiver Hecke algebra of type $A_\ell^{(1)}$.
%Here, ${deg(T)}$ and ${codeg(T)}$ are degree and codegree for standard tableaux appeared in \cite[Section 3.5]{BKW11}.
%Our graded formulas in Section \ref{Sec: dimension formula} are described in terms of these Laurent polynomials.

 %In this subsection, we give closed-form expressions for $\qF_q(T)$ and
%$\qE_q(T)$ by observing the configuration of entries in a standard tableau $T$.
%To do this, we need some notations.

%\subsection{Weak order on $\ST(Y)$ revisited}

Let $Y \in \mathcal{Z}(\Lambda_0)$ be a proper Young wall with $n$
blocks. For a standard tableau
$T \in \ST(Y)$ of shape $Y$, we set $ \dg(\emptyset) = 0 $ and $
\cdg(\emptyset) = 0 $, and, if $b$ denotes the  block of $Y$
with entry $n$ and $i = \res(b)$, then we define inductively
\begin{equation} \label{Eq: degree codegree}
\begin{aligned}
\dg(T) = \mathsf{d}_i \cdot L_i(b;Y) + \dg(T_{<n}), \quad \quad \cdg(T) =
\mathsf{d}_i \cdot R_i(b;Y) + \cdg(T_{<n}),
\end{aligned}
\end{equation}
where $\mathsf{d}_i$ are the integers defined by $\eqref{Eq: diagonal matrix}$.
In a similar manner, we set $ \qE_q(\emptyset) = 1 $ and $
\qF_q(\emptyset) = 1 $, and, if $b$ denotes the $i$-block
of $Y$ with entry $n$, then we define inductively
\begin{equation} \label{Eq: Laurent poly}
\begin{aligned}
\qF_q(T) &= q_i^{L_i(b;Y^-)} \left( \frac{ 1-(-q^2)^{l(b; Y^-)} }{q} \right)^{ \delta_-(b;Y^-) } \qF_q(T_{<n}), \\
\qE_q(T) &= q_i^{-R_i(b;Y)} \left( \frac{ 1-(-q^2)^{l(b; Y )} }{q}
\right)^{ \delta_+(b;Y) } \qE_q(T_{<n}),
\end{aligned}
\end{equation}
where $Y^- = Y \nearrow b$, i.e. $Y^- = \sh(T_{<n})$, $ L_i(b;Y^-)
$, $R_i(b;Y)$, $\delta_-(b;Y)$, $\delta_+(b;Y)$, and $l(b; Y )$ are
given as $\eqref{Eq: delta}$ and $\eqref{Eq: Fock space1}$. Since
$L_i(b;Y)=L_i(b;Y^-)$, one can easily check that
$$ \qF_q(T) = q^{\dg(T)} \qF'_q(T), \qquad \qE_q(T)= q^{-\cdg(T)} \qE'_q(T) $$
where $\qF'_q(T)$ and $\qE'_q(T)$ are defined inductively as
follows:
$$
\qF'_q(T) = \left( \frac{ 1-(-q^2)^{l(b; Y^-)} }{q}
\right)^{ \delta_- (b;Y^-)} \qF'_q(T_{<n}), \ \ \qE'_q(T) = \left(
\frac{ 1-(-q^2)^{l(b; Y )} }{q} \right)^{ \delta_+  (b;Y)}
\qE'_q(T_{<n}).
$$
Here $\qF'_q(\emptyset)=1$ and $\qE'_q(\emptyset)=1$.

%\ber
%\begin{Rmk} \label{Rmk:
%2x}In particular, if $l(b;Y)=1$ and $\delta_L(b;Y)=1$ (resp.
%$\delta_R(b;Y)=1$), then equation \eqref{Eq: Laurent poly} can be
%expressed in the following simple forms:
%\begin{equation} \label{Eq: Laurent poly simple}
%\begin{aligned}
%\qF_q(T) &= q_i^{L_i(b;Y^-)} [2]_\tyx \times \qF_q(T_{<n}), \ \
%\qE_q(T) &= q_i^{-R_i(b;Y)} [2]_\tyx \times \qE_q(T_{<n}),
%\end{aligned}
%\end{equation}
%\end{Rmk}
%\er

Recall the number $h$ defined as $\eqref{Eq: h strict}$. We set
\begin{align} \label{Eq: hst}
\hst_Y = ( \hst_1, \hst_2, \ldots ), \qquad |\hst_Y| = \sum_{j=1}^{\infty} \hst_j,
\end{align}
where $\hst_j$ is the number of columns $y_k$ of $Y$ whose heights are $hj$, i.e. $\varpi_j = \# \{ y_k \mid |y_k| = hj \}$.
Note that, if $a$ is the block at the top of the $k$th column $y_k$, then
\begin{align} \label{Eq: h1}
 h \mid |y_k| \quad \text{ if and only if } \quad \res(a) = \res_+(a).
\end{align}

The following lemma can be understood as an analogue of
\cite[Lemma 3.12]{BKW11}.

\begin{Lem} \label{Lem: def+}
Let $Y$ be a proper Young wall with $n$ blocks and let $ \beta = \Lambda_0 - \wt(Y)$. Then,
for $T \in \ST(Y)$, we have
$$ \dg(T)+\cdg(T)=\df(\Lambda_0, \beta)- |\hst_Y|.$$
\end{Lem}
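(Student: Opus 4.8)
The plan is to induct on the number of blocks $n=|Y|$, peeling off the block $b$ carrying the largest entry $n$ in $T$. Writing $i=\res(b)$ and $Y^-=Y\nearrow b=\sh(T_{<n})$, the restriction $T_{<n}$ is a standard tableau of shape $Y^-$, and $Y^-$ is a proper Young wall with $n-1$ blocks and $\beta^-:=\Lambda_0-\wt(Y^-)=\beta-\alpha_i$. The base case $Y=\emptyset$ is immediate since all four quantities vanish, so it suffices to match the one-block increment of $\dg+\cdg$ with the one-block increment of $\df(\Lambda_0,\cdot)-|\hst_{(\cdot)}|$.

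For the increment of the left-hand side, the inductive definitions in \eqref{Eq: degree codegree} give $\dg(T)+\cdg(T)=\mathsf{d}_i\bigl(L_i(b;Y)+R_i(b;Y)\bigr)+\bigl(\dg(T_{<n})+\cdg(T_{<n})\bigr)$, and by the induction hypothesis the parenthesized term equals $\df(\Lambda_0,\beta^-)-|\hst_{Y^-}|$. Let $p$ be the column of $b$. Combining the two sums in \eqref{Eq: Fock space1} and using $d_i(Y,k)=-\langle h_i,\wt(y_k)\rangle$ together with $\sum_{k\ge 0}\wt(y_k)=\beta$, I obtain $L_i(b;Y)+R_i(b;Y)=\sum_{k\ne p}d_i(Y,k)+\Lambda_0(h_i)=\Lambda_0(h_i)-\langle h_i,\beta\rangle-d_i(Y,p)$. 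For the right-hand side, a short computation with $(\alpha_i\mid\Lambda)=\mathsf{d}_i\langle h_i,\Lambda\rangle$ and $(\alpha_i\mid\alpha_i)=2\mathsf{d}_i$ yields $\df(\Lambda_0,\beta)-\df(\Lambda_0,\beta^-)=\mathsf{d}_i\bigl(\Lambda_0(h_i)-\langle h_i,\beta\rangle+1\bigr)$.

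Subtracting, the entire lemma collapses to the purely combinatorial identity $|\hst_Y|-|\hst_{Y^-}|=\mathsf{d}_i\bigl(1+d_i(Y,p)\bigr)$, that is, the change in the number of columns whose height is a positive multiple of $h$ equals $\mathsf{d}_i(1+d_i(Y,p))$. Since only column $p$ changes, I would prove this by three cases according to how adding $b$ interacts with multiples of $h$, read off from \eqref{Eq: h1} (which says $h\mid|y_k|$ exactly when $\res=\res_+$ at the top) and \eqref{Eq: value of di}: (a) if $b$ completes a multiple of $h$, then $\res(b)=\res_+(b)$, the column weight becomes a multiple of the null root $\delta$ so $d_i(Y,p)=0$, and the completing residue forces $\mathsf{d}_i=1$, giving both sides $1$; (b) if $b$ is stacked directly on a column already a multiple of $h$, then $\res(b)=\res_-(b)$, $d_i(Y,p)=-2$, again $\mathsf{d}_i=1$, and both sides equal $-1$; (c) otherwise $\res(b)\notin\{\res_-(b),\res_+(b)\}$, $d_i(Y,p)=-1$, and both sides vanish.

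The main obstacle is precisely this case analysis. The content is not the algebra of the second paragraph but verifying that the \emph{boundary} residues --- those at which a column's height passes through a multiple of $h$ --- are exactly the special nodes with $\mathsf{d}_i=1$, and that $d_i$ vanishes on complete $\delta$-columns because $\langle h_i,\delta\rangle=0$; both of these must be checked separately against the half-block patterns of $A^{(2)}_{2\ell}$ and $D^{(2)}_{\ell+1}$. A related subtlety is the bottom row: for a block in the first row the ground-state wall lies below it, so one must invoke the $r>1$ conventions built into \eqref{Eq: delta} and \eqref{Eq: value of di} to assign the correct value of $d_i(Y,p)$ and keep the increment identity valid at the base of each column.
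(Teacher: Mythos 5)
Your proposal is correct and follows essentially the same route as the paper's own proof: induction on $n$ by peeling off the block with entry $n$, the identical algebraic bookkeeping ($L_i(b;Y)+R_i(b;Y)=(\Lambda_0-\beta)(h_i)-d_i(Y,p)$ and the increment formula for $\df$), and the reduction to the same key identity $|\hst_Y|-|\hst_{Y^-}|=\mathsf{d}_i\bigl(1+d_i(Y,p)\bigr)$, which the paper also establishes by exactly your three-way case analysis of $\res(b)$ against $\res_\pm(b)$ via \eqref{Eq: h1} and \eqref{Eq: value of di} together with the observation that $\mathsf{d}_i=1$ at the relevant residues. Your added justifications (the null-root argument $\langle h_i,\delta\rangle=0$ for case (a) and the explicit attention to the first-row/ground-state convention) are elaborations of points the paper handles implicitly by citing \eqref{Eq: value of di}.
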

\begin{proof}
Let $b$ be the block in $T$ with entry $n$ in the $k$th column and
$$i = \res(b), \qquad Y^- = \sh(T_{<n}).$$
Since $ \mathsf{d}_i = 1$ if either $i=0$ ($\tyX = A_{2\ell}^{(2)}$) or $i=0,\ell$ ($\tyX = D_{\ell+1}^{(2)}$), it follows from $\eqref{Eq: value of di}$ that
$$ \mathsf{d}_i (1+ d_i(Y;k) ) = \left\{
                   \begin{array}{ll}
                     -1 & \hbox{ if } i = \res(b) = \res_-(b), \\
                     1 & \hbox{ if } i = \res(b) = \res_+(b), \\
                     0 & \hbox{ otherwise,}
                   \end{array}
                 \right.
 $$
which yields, by $\eqref{Eq: h1}$,
$$ |\hst_Y| = \mathsf{d}_i (1+ d_i(Y;k) ) + |\hst_{Y^-}| . $$

We use induction on $n$. If $n = 0$, then there is noting to prove. Suppose that the assertion holds for all proper Young walls $Y'$ with $|Y'| < n$.
Then, since
\begin{align*}
\df(\Lambda_0, \beta - \alpha_i) &= \df(\Lambda_0, \beta)  -(\Lambda_0 - \beta| \alpha_i) - (\alpha_i | \alpha_i)/2, \\
&= \df(\Lambda_0, \beta) - \mathsf{d}_i (\Lambda_0 - \beta)(h_i) - \mathsf{d}_i ,
\end{align*}
we have
\begin{align*}
\dg(T)+\cdg(T) &= \mathsf{d}_i (L_i(b;Y) + R_i(b;Y)) + \dg(T_{<n})+\cdg(T_{<n})\\
&= \mathsf{d}_i \left( (\Lambda_0-\beta)(h_i) -d_i(Y;k) \right) + \df(\Lambda_0, \beta-\alpha_i)- |\hst_{Y^-}| \\
&=  \df(\Lambda_0, \beta)  -  \mathsf{d}_i ( 1+ d_i(Y;k)  ) - |\hst_{Y^-}| \\
&=  \df(\Lambda_0, \beta)  -  |\hst_{Y}|,
\end{align*}
which complete the proof.
\end{proof}

%The Laurent polynomials $\eqref{Eq: Laurent poly}$ for the canonical
%tableau $T^{ Y }$ can be computed as follows.

\begin{defn} \label{Def:ltb}
\begin{enumerate}
\item For a proper Young wall $Y$, let
\begin{equation} \label{def:OCCD}
\begin{aligned}
%O(Y) &=  \{ b \in Y \mid \res(b) = 0\}, \quad  \quad \ \ \ \ \ o(Y) =|O(Y)|, \\
C_-(Y) &= \{ b \in Y \mid \delta_-(b;Y) =1 \}, \quad \  \  \ c_-(Y)=|C_-(Y)|,\\
C_+(Y) &= \{ b \in Y \mid \delta_+(b;Y) =1 \},\ \quad  \ \ c_+(Y)=|C_+(Y)|.
%\ \ d(Y) &= \sum_{  1 \le k < l \le \ell(Y)} (\wt( y_l )\mid \wt( y_k  ) ).
\end{aligned}
\end{equation}
Note that $C_-(Y) \cap C_+(Y) = \emptyset$.
\item Let $b$ be a block in a proper Young wall $Y$ in the $i$th column and the $j$th row.
\begin{enumerate}
\item
If $b$ is in $C_-(Y)$, then we define
$$ l(b;T) = \max\{ k \in \Z_{\ge 0} \mid T(i+k,j-1) < T(i,j) \} $$
%$$ l(b;T) = \left\{
%              \begin{array}{ll}
%                \max\{ k \in \Z_{\ge 0} \mid T(i+k,j-1) <
%T(i,j) \} & \hbox{ if } j > 1, \\
%                0 & \hbox{ if } j = 1.
%              \end{array}
%            \right.
% $$
\item
If $b$ is in $C_+(Y)$, then we define
$$ l(b;T) = \max\{k \in \Z_{\ge 0} \mid  T(i-k,j+1) > T(i,j) \}. $$
\end{enumerate}
Note that $T(k,l) = \infty$ if there is no block at the $(k,l)$-position in $T$.
\end{enumerate}
\end{defn}

The following theorem gives us explicit descriptions for $\qF_q(T)$ and $\qE_q(T)$.

\begin{Thm} \label{Thm: qF over qE}
Let $Y = (y_k)_{k=0}^\infty$ be a proper Young wall with $n$ blocks.
\begin{enumerate}
\item For any $T \in \ST(Y)$, we have
\begin{align*}
\qF'_q(T) &= q^{-c_-(Y)} \prod_{b \in C_-(Y)}  (1- (-q^2)^{l(b;T)+1}),  \\
\qE'_q(T) &= q^{-c_+(Y)}\prod_{b \in C_+(Y)}  (1- (-q^2)^{l(b;T)+1}),
\end{align*}
\noindent
and hence
\begin{align*}
\qF_q(T) &= q^{\dg(T)-c_-(Y)} \prod_{b \in C_-(Y)}(1- (-q^2)^{l(b;T)+1}), \\
\qE_q(T) &= q^{-\cdg(T)-c_+(Y)}\prod_{b \in C_+(Y)} (1- (-q^2)^{l(b;T)+1}),
\end{align*}
where $C_-(Y)$, $C_+(Y)$, $c_-(Y)$ and $c_+(Y)$ are given as $\eqref{def:OCCD}$.
\item In particular, we have
\begin{align*}
\qF_q(T^Y) &= q^{o(Y)-c_-(Y)} (1+q^2 )^{c_-(Y)}, \\
\qE_q(T^Y) &=  q^{d(Y)-c_+(Y)} \prod_{b\in C_+(Y) }  (1-(-q^2)^{l_R(b;Y)}) ,
\end{align*}
where $o(Y) = (\Lambda_0 | \Lambda_0 - \wt(Y))$, $d(Y) = \sum_{  1 \le k < l \le \ell(Y)} (\wt( y_l )\mid \wt( y_k  ) )$, and, if $b$ is in the $p$th column, then
\begin{align} \label{eq: lRby}
l_R(b;Y) = \left\{
               \begin{array}{ll}
                 \# \{ y_k \mid p \ge k,\  |y_k|=|y_p| \} & \hbox{ if } \text{ $b$ is at the top of its column}, \\
                 1 & \hbox{ otherwise.}
               \end{array}
             \right.
\end{align}
\end{enumerate}
\end{Thm}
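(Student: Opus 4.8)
The plan is to prove part (1) by induction on $n$, using the explicit recursions defining $\qF'_q$ and $\qE'_q$ together with the combinatorial meaning of $l(b;T)$ from Definition \ref{Def:ltb}. I will focus on $\qF'_q$; the statement for $\qE'_q$ is entirely parallel. The key observation is that the factor contributed when we add the top block $b$ (with entry $n$, say in the $i$th column and $j$th row) is nontrivial exactly when $\delta_-(b;Y^-)=1$, i.e. when $b\in C_-(Y)$, and in that case it equals $(1-(-q^2)^{l(b;Y^-)})/q$, where $l(b;Y^-)$ counts columns of $Y^-$ having the same height as the column containing $b$. So the product over $C_-(Y)$ builds up correctly one factor at a time. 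The content of the theorem is that the exponent $l(b;Y^-)$ appearing in the Fock-space recursion — which depends on the intermediate shape $Y^-=\sh(T_{<n})$ at the moment $b$ was added — agrees with the purely tableau-theoretic quantity $l(b;T)+1$ from Definition \ref{Def:ltb}(2a).

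The heart of the argument, and the step I expect to be the main obstacle, is establishing the identity $l(b;Y^-)=l(b;T)+1$ for each $b\in C_-(Y)$. The subtlety is that $l(b;Y^-)$ is defined relative to the shape obtained by deleting \emph{all} blocks with entries larger than the entry of $b$ — i.e. the blocks added after $b$ in the filling order $T$ — whereas $l(b;T)$ is defined directly from the entries of the finished tableau as $\max\{k\mid T(i+k,j-1)<T(i,j)\}$. To reconcile these, I would argue that when $b$ is added as the top of its column, the columns of $Y^-$ sharing its height are exactly those columns $y_{i+k}$ (with $k\ge 0$) whose block at row $j-1$ already carries an entry smaller than that of $b$; the standardness conditions of Proposition \ref{Prop: standard}, namely $T(i,j)<T(i,j+1)$ and $T(i,j)<T(i+1,j-\delta(i,j))$, are precisely what guarantee that the run of such columns is recorded faithfully by the inequality $T(i+k,j-1)<T(i,j)$. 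Thus the count $l(b;Y^-)$ becomes the maximal $k$ with $T(i+k,j-1)<T(i,j)$, plus the column of $b$ itself, giving $l(b;T)+1$. The $\delta(i,j)=0$ condition characterizing $C_-(Y)$ (where $\res(b)=\res_-(b)$) ensures we compare against row $j-1$ rather than $j$, matching the definition of $l(b;T)$.

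Once the identity $l(b;Y^-)=l(b;T)+1$ is in hand, the inductive step is immediate: stripping off $b$ replaces $C_-(Y)$ by $C_-(Y^-)=C_-(Y)\setminus\{b\}$ (when $b\in C_-(Y)$) or leaves it unchanged (when $b\notin C_-(Y)$), and in the former case multiplies $\qF'_q$ by $(1-(-q^2)^{l(b;T)+1})/q$, accounting simultaneously for the new product factor and for one unit in the exponent $-c_-(Y)$. The formulas for $\qF_q(T)$ and $\qE_q(T)$ then follow by multiplying by $q^{\dg(T)}$ and $q^{-\cdg(T)}$ as recorded in the paragraph preceding \eqref{Eq: hst}. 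For part (2), I would specialize to $T=T^Y$, where the filling proceeds bottom-to-top and right-to-left. For $\qF_q(T^Y)$ every $b\in C_-(Y)$ has $l(b;T^Y)=1$: at the moment $b$ is added in the canonical order, exactly the two columns involved (its own column and the adjacent one it is compared against) share the relevant height, so each factor collapses to $1+q^2$, giving the clean power $(1+q^2)^{c_-(Y)}$; the exponent $\dg(T^Y)-c_-(Y)$ is identified with $o(Y)-c_-(Y)$ by unwinding \eqref{Eq: degree codegree} and the definition of $o(Y)=(\Lambda_0\mid\Lambda_0-\wt(Y))$ via the $L_i$ sums, which telescope against $\Lambda_0(h_i)$. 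For $\qE_q(T^Y)$ the codegree computation produces $d(Y)=\sum_{k<l}(\wt(y_l)\mid\wt(y_k))$ from the $R_i$ sums, while the value $l(b;T^Y)=l_R(b;Y)$ from \eqref{eq: lRby} is read off directly: for a block at the top of its column the canonical filling order makes $l(b;T^Y)+1$ count exactly the columns weakly to the left sharing its height, and otherwise the factor is trivial with $l_R=1$. The only real care needed throughout is bookkeeping the $\delta_\pm$ conditions and the row/column conventions; the essential combinatorial content is the single reconciliation identity $l(b;Y^-)=l(b;T)+1$.
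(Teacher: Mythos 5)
Your overall route is the same as the paper's: part (1) by induction on $n$ from the defining recursions (the paper's entire proof of (1) is the sentence ``it can be proved easily from definitions by induction on $n$''), and part (2) by direct evaluation at $T^Y$. Your reconciliation identity $l(b;Y^-)=l(b;T)+1$ for the block $b$ carrying the largest entry is exactly the content hidden in that one-line induction, and your justification of it is sound: when $T(i,j)=n$ and $\delta(i,j)=0$, standardness forces $T(i+1,j)=\infty$, so every column strictly to the left of $b$'s column has fewer than $j$ blocks, and the columns of $Y^-$ of height $j-1$ are precisely column $i$ together with the columns $i+k$, $k\ge 1$, having a block in row $j-1$. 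However, your part (2) contains two off-by-one assertions that contradict your own part (1) formula. For $b\in C_-(Y)$ the correct value is $l(b;T^Y)=0$, not $1$: in the canonical filling every block in a column strictly to the left of $b$ has entry larger than $T^Y(i,j)$, so only $k=0$ survives in the maximum defining $l(b;T^Y)$. What equals $1$ is the Fock-space exponent $l(b;Y^-)$; the factor $1+q^2$ arises as $1-(-q^2)^{l(b;T^Y)+1}$ with $l(b;T^Y)=0$, whereas your stated value $l(b;T^Y)=1$ would give $1-(-q^2)^{2}=1-q^{4}$. Likewise, for $b\in C_+(Y)$ the correct identity is $l(b;T^Y)=l_R(b;Y)-1$, not $l_R(b;Y)$, and $l_R(b;Y)$ counts the columns of equal height weakly to the \emph{right} of $b$'s column (indices $k\le p$, the indexing running right to left), not to the left. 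These two evaluations, $l(b;T^Y)=0$ on $C_-(Y)$ and $l(b;T^Y)=l_R(b;Y)-1$ on $C_+(Y)$, are precisely what the paper's proof of (2) records; once they are corrected your argument coincides with the paper's, and the final formulas you state are the right ones.
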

\begin{proof}
(1) It can be proved easily from definitions by induction on $n=|Y|$.
%
%(1) We first deal with $\qF'_q(T)$, and use induction on $n$. If $n=0$, then there is nothing to prove.
%Suppose that the assertion holds for proper Young walls $Y'$ with $|Y'|<n$. Let $b$ be the block with entry $n$ in $T$ and
%$Y^- = \sh(T_{<n})$. If $b \not\in C_-(Y)$, then by definition
%\begin{align*}
%& \delta_-(b;Y^-) = 0, \quad  C_-(Y) = C_-(Y^-),\quad c_-(Y) = c_-(Y^-), \\
%& l(b';T) =  l(b';T_{<n}) \quad  \text{ for } b'\in C_-(Y^-),
%\end{align*}
%which give
%$$ \qF'_q(T) = \qF'_q(T_{<n}) = q^{-c_-(Y)}\prod_{b' \in C_-(Y)}
%( 1 -(-q^2)^{l(b';T)+1}). $$
%
%Suppose that $b \in C_-(Y)$. Then, by definition, we have
%\begin{align*}
%&\delta_-(b;Y^-) = 1, \quad C_-(Y) = C_-(Y^-) \cup \{ b\}, \quad c_-(Y) = c_-(Y^-)+1, \\
%& l(b;Y^-) = l(b;T)+1, \quad  l(b';T) =  l(b';T_{<n}) \quad  \text{ for } b'\in C_-(Y^-).
%\end{align*}
%Thus, we obtain
%\begin{align*}
%\qF'_q(T) & =  \left( \frac{1-(-q^2)^{l(b;Y^-)}}{q} \right) \qF'_q(T_{<n})\\
%& =  \left( \frac{1-(-q^2)^{l(b;Y^-)}}{q} \right)
%q^{-c_-(Y^-)} \prod_{b' \in C_-(Y^-)} (1- (-q^2)^{l(b';T_{<n})+1}) \\
%& = q^{-c_-(Y^-)-1} \big( 1-(-q^2)^{l(b;T)+1} \big) \prod_{b' \in
%C_-(Y^-)} (1- (-q^2)^{l(b';T_{<n})+1})   \\
%& = q^{-c_-(Y)}  \prod_{b' \in C_-(Y)} (1- (-q^2)^{l(b';T)+1}) .
%\end{align*}
%
%The remaining case $\qE'_q(T)$ can be proved in the same manner.

(2) By the definition of the canonical tableaux $T^Y$, it follows from $\eqref{Eq: di}$ and $\eqref{Eq: Fock space1}$ that
$$ \dg(T^Y) = o(Y), \qquad  \cdg(T^Y) = - d(Y). $$

The configuration of entries of $T^Y$ gives
$$ l(b;T^Y)=0 \text{ if } b\in C_-(b), \qquad
l(b;T^Y)= l_R(b;Y) - 1 \text{ if } b\in C_+(b),  $$
which implies the assertion.
\end{proof}

Theorem \ref{Thm: weak order} tells us that, for any standard tableau $T \in \ST(Y)$, there exist a sequence $\{T_k\}_{k=0}^{t}$ of standard tableaux such that
\begin{align*}
& T_0 = T^Y, \quad T_t = T, \\
& T_k = s\ T_{k-1} \text{ with } T_k > T_{k-1} \text{ for some simple transposition $s \in \sg_n$}.
\end{align*}
The following proposition with Theorem \ref{Thm: weak order} allows to compute the Laurent polynomials $\qF_q(T)$ and $\qE_q(T)$ inductively.

\begin{Prop} \label{Prop: ratio for E, F}
Let $T, T' \in \ST(Y)$ be standard tableaux of shape $Y$ such that $ T < T' $ and $s_k T = T'$ for some simple transposition $s_k \in \sg_n$.
We denote by $b_1$ (resp.\ $b_2$) the block with entry $k$ (resp.\ $k+1$) in $T$, and set
$$ i = \res(b_1),\ \ j = \res(b_2),\ \ Y_0 = \sh(T_{< k}). $$
If $b_1$ (resp.\ $b_2$) is in the $p$th row (resp.\ the $q$th row) of $T$, then we have
$$
\frac{\qF_q(T')}{\qF_q(T)} = \frac{\qE_q(T')}{\qE_q(T)}  = \left\{
                           \begin{array}{ll}
                             q^{-(\alpha_i, \alpha_j)} \left( \displaystyle \frac{1-(-q^2)^{l(b_1;Y_0)+1}}{1-(-q^2)^{l(b_1;Y_0)}} \right)  & \hbox{ if } \res(b_1) = \res(b_2),\ p = q +1, \\
                             q^{-(\alpha_i, \alpha_j)} & \hbox{ otherwise.}
                           \end{array}
                         \right.
$$
\end{Prop}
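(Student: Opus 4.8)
The plan is to compute both ratios directly from the inductive definition of $\qF_q$ and $\qE_q$ in $\eqref{Eq: Laurent poly}$, exploiting the fact that $T$ and $T' = s_k T$ differ only in the positions of the entries $k$ and $k+1$. First I would observe that since $s_k T = T'$ with $T < T'$, the blocks $b_1$ (entry $k$ in $T$) and $b_2$ (entry $k+1$ in $T$) are necessarily \emph{distinct} blocks of $Y$, and that swapping their labels still yields a standard tableau. Writing $Y_0 = \sh(T_{<k})$, the two tableaux $T$ and $T'$ agree on all steps $T_{<k}$ and on all steps after $k+1$; they differ only in the intermediate shape: in $T$ one adds $b_1$ first then $b_2$, while in $T'$ one adds $b_2$ first then $b_1$. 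Therefore in the telescoping product defining $\qF_q(T)$ (resp.\ $\qE_q(T)$), every factor cancels in the ratio except the two factors coming from the addition of $b_1$ and $b_2$. So the main task reduces to comparing the product of two local factors for the two orders of addition.

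\textbf{Isolating the local factors.}
For $\qF_q$, the factor contributed when a block $b$ of residue $i$ is added to a proper Young wall $Y^-$ is $q_i^{L_i(b;Y^-)}\bigl(\tfrac{1-(-q^2)^{l(b;Y^-)}}{q}\bigr)^{\delta_-(b;Y^-)}$, and similarly for $\qE_q$ with $q_i^{-R_i(b;Y)}$ and $\delta_+$. The next step is to track how the statistics $L_i$, $R_i$, $l$, $\delta_\pm$ for the block added second change when we switch the order of insertion of $b_1$ and $b_2$. Here I would use the definitions $\eqref{Eq: Fock space1}$ of $L_i, R_i, l$ together with $\eqref{Eq: di}$ and $\eqref{Eq: value of di}$. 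The key algebraic input is that adding $b_1$ (residue $i$) before $b_2$ (residue $j$) alters $L_j$ (resp.\ $R_j$) for $b_2$ by exactly the contribution $d_j(-;\text{column of }b_1)$, and by the definition of $d_i$ this contribution assembles into the Cartan pairing. Concretely, the quotient of the two $q_i^{\pm L_i}q_j^{\pm L_j}$ type prefactors should collapse to $q^{-(\alpha_i,\alpha_j)}$; this is where I expect to invoke $(\alpha_i\mid\alpha_j) = \mathsf{d}_i\langle h_i,\alpha_j\rangle = \mathsf{d}_i a_{ij}$ and the symmetry $(\alpha_i\mid\alpha_j)=(\alpha_j\mid\alpha_i)$.

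\textbf{The two cases and the $l$-factor.}
The remaining work is the $\bigl(1-(-q^2)^{l}\bigr)^{\delta_\pm}$ factors. In the generic case ($\res(b_1)\neq\res(b_2)$, or the blocks are not vertically adjacent with equal residue) I expect the $\delta_\pm$-data and the $l$-values of $b_1$ and $b_2$ to be unaffected by swapping the insertion order, so these factors cancel entirely and only the $q^{-(\alpha_i,\alpha_j)}$ survives — giving the ``otherwise'' branch. The delicate case is $\res(b_1)=\res(b_2)=i$ with $p=q+1$, i.e.\ $b_1$ sits directly above $b_2$ in the \emph{same} column. Then whichever of the two is added second has $\delta_-=1$ (resp.\ $\delta_+=1$ for $\qE_q$) and its $l$-value is governed by the number of equal-height columns at that stage. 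Inserting $b_1$ before versus after $b_2$ shifts the relevant column-multiplicity by one, so the surviving ratio of $l$-factors is precisely $\tfrac{1-(-q^2)^{l(b_1;Y_0)+1}}{1-(-q^2)^{l(b_1;Y_0)}}$, matching the stated first branch. I would verify carefully that $l(b_1;Y_0)$ is the correct normalization (the value of $l$ read off at the common base shape $Y_0$), using Definition \ref{Def:ltb} and $\eqref{Eq: Fock space1}$. The fact that the $\qF_q$-ratio and the $\qE_q$-ratio coincide then follows because both reduce to the \emph{same} combinatorial count of equal-height columns, with the $L_i/R_i$ prefactors independently yielding the identical symmetric quantity $q^{-(\alpha_i,\alpha_j)}$.

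\textbf{Main obstacle.}
The principal difficulty is the bookkeeping in the special case $\res(b_1)=\res(b_2)$, $p=q+1$: one must confirm that reversing the insertion order of two same-colored vertically adjacent blocks changes the $l$-statistic by exactly one while leaving all other blocks' contributions invariant, and that the corresponding $\delta_\pm$ flags land correctly on the second-inserted block in each of $T$ and $T'$. Handling the boundary behavior of $l(b;Y_0)$ when the relevant column is isolated (so that $l(b_1;Y_0)=0$ and the ratio degenerates appropriately) is the subtle point that I would treat most carefully, likely by a direct case check using $\eqref{Eq: value of di}$.
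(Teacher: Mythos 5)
Your overall strategy---cancel all factors in the telescoping products except the two local factors for entries $k$ and $k+1$, show the $q$-power prefactors assemble into $q^{-(\alpha_i|\alpha_j)}$ via $\eqref{Eq: di}$, and isolate a unit shift in the $l$-statistic in the special case---is the same as the paper's. But your treatment of the delicate case rests on a misreading of the configuration, and this is a genuine gap. You interpret ``$\res(b_1)=\res(b_2)$, $p=q+1$'' as ``$b_1$ sits directly above $b_2$ in the \emph{same} column.'' That configuration is impossible: in $T$ the entry $k$ (on $b_1$) is inserted before entry $k+1$ (on $b_2$), so $b_1$ cannot be added on top of the not-yet-present $b_2$; equivalently, swapping two entries lying in the same column always destroys standardness, so $T$ and $T'=s_kT$ can never both be standard in your picture. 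The actual special case (this is what the paper's picture shows) has $b_1$ and $b_2$ in \emph{different} columns: the hypothesis $T<T'$ forces $b_2$ to lie strictly to the left of $b_1$ (a directional fact you never extract, though it is also what makes the prefactor bookkeeping work), and $p=q+1$ means $b_1$ is one row higher than $b_2$ in a column further right. The $l$-shift then comes from column-height multiplicities: adding $b_2$ first raises its column to exactly the height that $b_1$'s column has in $Y_0$, so $l(b_1;\sh(T'_{\le k}))=l(b_1;Y_0)+1$, while every statistic attached to $b_2$ is unchanged. Under your same-column reading, this genuine case would be classified into your ``otherwise'' branch, where you claim the $l$-factors cancel entirely---so executing your plan would yield $q^{-(\alpha_i|\alpha_j)}$ there and contradict the proposition, even though the formula you copied for the special case happens to be the correct one.

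A second, related error: you assert that ``whichever of the two is added second has $\delta_-=1$ (resp.\ $\delta_+=1$).'' The flags $\delta_\pm(b;\cdot)$ of $\eqref{Eq: delta}$ depend only on whether the block directly below/above $b$ in the wall pattern has the same residue as $b$; they are independent of insertion order, and the paper's proof explicitly records $\delta_-(b_1;Y_0)=\delta_-(b_1;\sh(T'_{\le k}))$, $\delta_-(b_2;Y_0)=\delta_-(b_2;\sh(T_{\le k}))$, and the analogous identities for $\delta_+$. In the correct configuration it is $b_1$ whose $l$-factor changes, and its exponent $\delta_-(b_1)=1$ holds in \emph{both} $T$ and $T'$, precisely because blocks in the same row have the same residue, so $\res_-(b_1)=\res(b_2)=\res(b_1)$. (For $\qE_q$ the changing factor sits instead on $b_2$, and one needs the extra observation that $l(b_2;\sh(T_{\le k+1}))=l(b_1;Y_0)$ to land on the stated common formula.) Finally, your worry about a degenerate denominator when $l(b_1;Y_0)=0$ is vacuous: by $\eqref{Eq: Fock space1}$ a column is always counted among the columns of its own height, so $l\ge 1$ whenever the factor occurs.
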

\begin{proof}
Since $T < T'$, $b_2$ appears to the left of $b_1$ in $T$ by $\eqref{Eq: going up in weak order}$.
Note that, when the case $\res(b_1) = \res(b_2) $ and $ p = q +1$, it can be viewed pictorially as follows.
$$
\xy
(33,9)*{};(43,9)*{} **\dir{-};
(27,3)*{};(43,3)*{} **\dir{-};
(6,0)*{};(43,0)*{} **\dir{-};
(0,-3)*{};(43,-3)*{} **\dir{-};
(-2,-9)*{};(43,-9)*{} **\dir{-};
(39,9)*{};(39,-11)*{} **\dir{-};
(33,9)*{};(33,-11)*{} **\dir{-};
(27,3)*{};(27,-11)*{} **\dir{-};
(12,0)*{};(12,-11)*{} **\dir{-};
(6,0)*{};(6,-11)*{} **\dir{-};
(0,-3)*{};(0,-11)*{} **\dir{-};
(19.5,-1.6)*{ _{\cdots \cdots}  };
(9.5,-1.6)*{ _{b_2}  }; (53,-1.6)*{  _{ \leftarrow \ q\text{th row}  }   };
(30.5,1.4)*{ _{b_1}  }; (53,1.4)*{ _{ \leftarrow\  p\text{th row}  }   };
\endxy
$$

For $t=1, \ldots , n$, let
\begin{align*}
Y_T(t) &= \sh(T_{\le t}), \quad b_T(t) = \text{ the block with entry $t$ in $T$}, \\
\quad Y_{T'}(t) &= \sh(T'_{\le t}), \quad b_{T'}(t) = \text{ the block with entry $t$ in $T'$}.
\end{align*}
Then, the condition $s_k T = T'$ tells that $b_T(t)$ and $b_{T'}(t)$ are placed at the same position in $Y$ if $t \ne k, k+1$, and $Y_T(t) = Y_{T'}(t)$ if $t \ne k$.
Set
$$ Y_1 = Y_T(k) , \quad Y_2 = Y_{T'}(k), \quad Y_3 = Y_{T}(k+1). $$

It follows from $\eqref{Eq: Laurent poly}$ that
$$
\frac{\qF_q(T')}{\qF_q(T)} = \frac{ \displaystyle q_j^{L_j(b_2;Y_0)} \left( \frac{1-(-q^2)^{l(b_2;Y_0)}}{q} \right)^{  \delta_-  (b_2;Y_0)}
q_i^{L_i(b_1;Y_2)} \left( \frac{1-(-q^2)^{l(b_1;Y_2)}}{q} \right)^{  \delta_-  (b_1;Y_2)} }
{ \displaystyle q_i^{L_i(b_1;Y_0)} \left( \frac{1-(-q^2)^{l(b_1;Y_0)}}{q} \right)^{  \delta_- (b_1;Y_0)}
q_j^{L_j(b_2;Y_1)} \left( \frac{1-(-q^2)^{l(b_2;Y_1)}}{q} \right)^{  \delta_-  (b_2;Y_1)} }
.$$
It follows from $\eqref{Eq: di}$ and the fact that $b_2$ appears to the left of $b_1$ in $T$ that
\begin{align*}
& q_j^{L_j(b_2;Y_0)} = q_j^{L_j(b_2;Y_1)}, \ \  \quad q_i^{-\alpha_j(h_i)} q_i^{L_i(b_1;Y_0)} = q_i^{L_i(b_1;Y_2)} \\
 & \delta_- (b_2;Y_0) = \delta_- (b_2;Y_1), \quad   \delta_- (b_1;Y_0) = \delta_- (b_1;Y_2),  \\
&  l(b_2;Y_0) = l(b_2; Y_1), \\
& l(b_1;Y_2) = \left\{
                  \begin{array}{ll}
                    l(b_1;Y_0)+1 & \hbox{ if } \res(b_1) = \res(b_2),\ p = q +1,  \\
                    l(b_1;Y_0) & \hbox{ otherwise,}
                  \end{array}
                \right.
\end{align*}
which yields
$$
\frac{\qF_q(T')}{\qF_q(T)} = \left\{
                           \begin{array}{ll}
                             q_i^{-\alpha_j(h_i)} \displaystyle \left( \frac{1-(-q^2)^{l(b_1;Y_0)+1}}{1-(-q^2)^{l(b_1;Y_0)}}\right) & \hbox{ if } \res(b_1) = \res(b_2),\ p = q +1, \\
                             q_i^{-\alpha_j(h_i)} & \hbox{ otherwise.}
                           \end{array}
                         \right.
$$

In the same manner,  $\eqref{Eq: Laurent poly}$ gives
$$
\frac{\qE_q(T')}{\qE_q(T)} = \frac{ \displaystyle q_j^{-R_j(b_2;Y_2)} \left( \frac{1-(-q^2)^{l(b_2;Y_2)}}{q} \right)^{ \delta_+ (b_2;Y_2)}
q_i^{-R_i(b_1;Y_3)} \left( \frac{1-(-q^2)^{l(b_1;Y_3)}}{q} \right)^{  \delta_+  (b_1;Y_3)} }
{ \displaystyle q_i^{-R_i(b_1;Y_1)} \left( \frac{1-(-q^2)^{l(b_1;Y_1)}}{q} \right)^{ \delta_+ (b_1;Y_1)}
q_j^{-R_j(b_2;Y_3)} \left( \frac{1-(-q^2)^{l(b_2;Y_3)}}{q} \right)^{ \delta_+ (b_2;Y_3)} }
.$$
It follows from $\eqref{Eq: di}$ and the fact that $b_2$ appears to the left of $b_1$ in $T$ that
\begin{align*}
& q_j^{\alpha_i(h_j)} q_j^{-R_j(b_2;Y_2)} = q_j^{-R_j(b_2;Y_3)}, \ \  \quad q_i^{-R_i(b_1;Y_1)} = q_i^{-R_i(b_1;Y_3)} \\
 & \delta_+(b_2;Y_3) = \delta_+(b_2;Y_2), \quad  \delta_+(b_1;Y_1) = \delta_+(b_1;Y_3),  \\
&  l(b_1;Y_1) = l(b_1; Y_3), \\
& l(b_2;Y_2) = \left\{
                  \begin{array}{ll}
                    l(b_2;Y_3)+1 & \hbox{ if } \res(b_1) = \res(b_2),\ p = q +1,  \\
                    l(b_2;Y_3) & \hbox{ otherwise.}
                  \end{array}
                \right.
\end{align*}
Since $l(b_1;Y_0) = l(b_2;Y_3)$ when $\res(b_1) = \res(b_2)$ and $ p = q +1$, we have
$$
\frac{\qE_q(T')}{\qE_q(T)} = \left\{
                           \begin{array}{ll}
                             q_j^{-\alpha_i(h_j)} \displaystyle \left( \frac{1-(-q^2)^{l(b_1;Y_0)+1}}{1-(-q^2)^{l(b_1;Y_0)}}\right) & \hbox{ if } \res(b_1) = \res(b_2),\ p = q +1, \\
                             q_j^{-\alpha_i(h_j)} & \hbox{ otherwise.}
                           \end{array}
                         \right.
$$
\end{proof}

For $t \in \Z_{\ge 0}$ and $\underline{m}=(m_1,m_2,\ldots,m_n) \in \Z_{\ge 0}^n$, we define
\begin{align*}
\langle t \rangle = \prod_{k=1}^t (1-(-q^2)^k)
\quad \text{ and }
\quad \langle \underline{m} \rangle = \prod_{k=1}^n \langle m_k
\rangle.
\end{align*}
The following corollary tells us that $\dfrac{ \qE_q(T)}{\qF_q(T)} $ is
a Laurent polynomial depending only on the proper Young wall $\sh(T)$.

\begin{Cor} \label{Cor: ratio E and F}
Let $Y$ be a proper Young wall. For any $T \in \ST(Y)$, we obtain
$$\dfrac{ \qE_q(T)}{\qF_q(T)} = q^{ - \df(\Lambda_0,\Lambda_0 - \wt(Y))} \langle \hst_Y \rangle .$$
where $ \hst_Y$ is given as $\eqref{Eq: hst}$.
\end{Cor}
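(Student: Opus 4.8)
The plan is to first show that $\qE_q(T)/\qF_q(T)$ is independent of the choice of $T\in\ST(Y)$, and then to evaluate the common value at the canonical tableau $T^Y$. For the first step, take $T,T'\in\ST(Y)$ with $T<T'$ and $s_kT=T'$. Proposition \ref{Prop: ratio for E, F} gives $\qE_q(T')/\qE_q(T)=\qF_q(T')/\qF_q(T)$, and cross-multiplying turns this into $\qE_q(T')/\qF_q(T')=\qE_q(T)/\qF_q(T)$. By Theorem \ref{Thm: weak order} every $T\in\ST(Y)$ is connected to the minimum $T^Y$ through a chain of such covering relations lying inside $\ST(Y)$, so the ratio is constant on $\ST(Y)$ and equals $\qE_q(T^Y)/\qF_q(T^Y)$. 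It therefore suffices to prove the identity for $T=T^Y$.

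Next I would substitute the closed forms from Theorem \ref{Thm: qF over qE}(2), namely $\qF_q(T^Y)=q^{o(Y)-c_-(Y)}(1+q^2)^{c_-(Y)}$ and $\qE_q(T^Y)=q^{d(Y)-c_+(Y)}\prod_{b\in C_+(Y)}(1-(-q^2)^{l_R(b;Y)})$, and split the product over $C_+(Y)$ according to whether $b$ sits at the top of its column. By $\eqref{Eq: h1}$ the top-of-column blocks in $C_+(Y)$ are exactly the tops of the columns whose height is a positive multiple of $h$, so there are $\sum_{j\ge1}\hst_j=|\hst_Y|$ of them. Since the heights are weakly decreasing from right to left, the $\hst_j$ columns of height $hj$ occupy consecutive indices, and for their top blocks $l_R(b;Y)$ runs through $1,2,\ldots,\hst_j$; hence the product over the top-of-column blocks equals $\prod_{j\ge1}\langle\hst_j\rangle=\langle\hst_Y\rangle$. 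Every non-top block of $C_+(Y)$ has $l_R(b;Y)=1$ and contributes a factor $1+q^2$, and each $b\in C_-(Y)$ has $l(b;T^Y)=0$, also contributing $1+q^2$.

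Writing $c_+^{\mathrm{nt}}(Y)$ for the number of non-top blocks of $C_+(Y)$, the above reduces the claim to the bookkeeping identity $\qE_q(T^Y)/\qF_q(T^Y)=q^{\,d(Y)-o(Y)-c_+(Y)+c_-(Y)}(1+q^2)^{c_+^{\mathrm{nt}}(Y)-c_-(Y)}\langle\hst_Y\rangle$. The heart of the matter is the equality $c_-(Y)=c_+^{\mathrm{nt}}(Y)$, which I would prove column by column: inside a column the periodic residue pattern produces the repeated-residue adjacencies in matched pairs, the lower block of each pair lying in $C_+(Y)$ and the upper one in $C_-(Y)$, and the only unmatched $C_+$-block is the top one, present exactly when the height is a multiple of $h$ (concretely, a column of height $m$ has $\lfloor m/h\rfloor$ blocks of $C_+(Y)$ and $\lfloor(m-1)/h\rfloor$ blocks of $C_-(Y)$, and subtracting the top correction matches the two counts). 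Thus the $(1+q^2)$-factors cancel. For the power of $q$ I would use $\dg(T^Y)=o(Y)$, $\cdg(T^Y)=-d(Y)$ together with Lemma \ref{Lem: def+} to get $d(Y)-o(Y)=|\hst_Y|-\df(\Lambda_0,\beta)$ with $\beta=\Lambda_0-\wt(Y)$, and combine it with $c_+(Y)=|\hst_Y|+c_-(Y)$; the exponent then collapses to $-\df(\Lambda_0,\beta)$, yielding the asserted formula.

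The main obstacle is precisely this per-column identity $c_-(Y)=c_+^{\mathrm{nt}}(Y)$ and the accompanying tracking of the powers of $q$ so that everything cancels cleanly; carrying it out requires a careful reading of the periodic residue pattern in each type $A^{(2)}_{2\ell}$ and $D^{(2)}_{\ell+1}$, including the behaviour of the half-unit blocks near the ground-state wall.
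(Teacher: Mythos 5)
Your proposal is correct and takes essentially the same route as the paper's proof: reduce to the canonical tableau $T^Y$ via Theorem \ref{Thm: weak order} and Proposition \ref{Prop: ratio for E, F}, then evaluate the ratio at $T^Y$ using Theorem \ref{Thm: qF over qE} together with Lemma \ref{Lem: def+}. The only difference is one of detail: the paper simply asserts the counting identity $c_+(Y)-c_-(Y)=|\hst_Y|$ and leaves the identification $\prod_{b}(1-(-q^2)^{l_R(b;Y)})=\langle\hst_Y\rangle\,(1+q^2)^{c_-(Y)}$ implicit, whereas you prove both by the per-column analysis (a column of $m$ blocks contributing $\lfloor m/h\rfloor$ blocks to $C_+(Y)$ and $\lfloor (m-1)/h\rfloor$ to $C_-(Y)$), which is exactly the verification the paper omits.
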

\begin{proof}

It follows from Theorem \ref{Thm: weak order} and Proposition \ref{Prop: ratio for E, F} that
$$ \dfrac{ \qE_q(T)}{\qF_q(T)} = \dfrac{ \qE_q(T^Y)}{\qF_q(T^Y)}. $$
Since $c_+(Y) - c_-(Y) = |\hst_Y| $, the assertion follows from Lemma \ref{Lem: def+} and Theorem \ref{Thm: qF over qE}.
\end{proof}

Consider a proper Young wall $Y$ with $n$ blocks. For $T \in \ST(Y)$, set
$$\qm_q(T) = \qF_q(T) / \qF_q(T^Y).$$
Let $w = w(T) \in \sg_n $ and write its reduced expression $w = s_{i_t} \ldots s_{i_2} s_{i_1}$,
and set $T_0 = T^Y$ and $T_k = s_{i_k} T_{k-1}$ ($k=1,\ldots, t$). Note that $T = w T^Y$ and $T_{k-1} < T_{k}$.
Since
$$ \qm_q(T) =  \qF_q(T) / \qF_q(T^Y) = \prod_{k=1}^t \qF_q(T_k) / \qF_q(T_{k-1}),$$
$\qm_q(T)$ may be computed directly by considering Theorem \ref{Thm: qF over qE} and Proposition \ref{Prop: ratio for E, F}.
Moreover, Proposition \ref{Prop: ratio for E, F} tells
\begin{align} \label{Eq: qE, qF, qm}
\qm_q(T) =  \prod_{k=1}^t \qF_q(T_k) / \qF_q(T_{k-1}) =  \prod_{k=1}^t \qE_q(T_k) / \qE_q(T_{k-1}) =  \qE_q(T) / \qE_q(T^Y).
\end{align}

We define
\begin{align} \label{Eq: def of m}
\qm_q(Y) = \sum_{T \in \ST(Y)} \qm_q(T).
\end{align}
%which will be used in the graded dimension formula given in Section \ref{Sec: graded dim fomula2}.
It follows from $\eqref{Eq: qE, qF, qm}$ that
\begin{equation} \label{Eq: mq}
\begin{aligned}
\qF_q(T^Y) \qm_q(Y) &= \sum_{T \in \ST(Y)} \qF_q(T), \quad \ \qE_q(T^Y)
\qm_q(Y) &= \sum_{T \in \ST(Y)} \qE_q(T).
\end{aligned}
\end{equation}

\begin{Prop} \label{Prop: m(Y)}
For any $T \in \ST(Y)$, $ \qm_q(T)$ is a Laurent polynomial and hence so is $\qm_q(Y)$.
\end{Prop}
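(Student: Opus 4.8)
The plan is to show that $\qm_q(T)$ is a Laurent polynomial by exhibiting it as a ratio $\qF_q(T)/\qF_q(T^Y)$ in which every potentially problematic denominator factor cancels against a corresponding numerator factor. The starting point is the explicit formula from Theorem \ref{Thm: qF over qE}(1), namely
$$ \qF_q(T) = q^{\dg(T)-c_-(Y)} \prod_{b \in C_-(Y)}(1- (-q^2)^{l(b;T)+1}), $$
and the special value from Theorem \ref{Thm: qF over qE}(2),
$$ \qF_q(T^Y) = q^{o(Y)-c_-(Y)} (1+q^2)^{c_-(Y)}. $$
Since $o(Y) = \dg(T^Y)$, the power-of-$q$ prefactors combine into a single monomial $q^{\dg(T)-o(Y)}$, which is patently a Laurent monomial and causes no trouble. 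Thus the entire question reduces to whether the quotient of products
$$ \frac{\prod_{b \in C_-(Y)}(1-(-q^2)^{l(b;T)+1})}{(1+q^2)^{c_-(Y)}} $$
is a genuine polynomial in $q$.

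First I would observe that the denominator factors as $(1+q^2)^{c_-(Y)} = \prod_{b \in C_-(Y)}(1+q^2)$, so there is exactly one factor of $(1+q^2)$ for each block $b \in C_-(Y)$. The key algebraic fact I would invoke is that $1+q^2 = 1-(-q^2)^1$ divides $1-(-q^2)^{m}$ for every $m \ge 1$: indeed $1-(-q^2)^m = (1-(-q^2))\sum_{r=0}^{m-1}(-q^2)^r$, so
$$ \frac{1-(-q^2)^{l(b;T)+1}}{1+q^2} = \sum_{r=0}^{l(b;T)}(-q^2)^r $$
is a polynomial in $q$ for each individual $b$ (using that $l(b;T) \ge 0$, so the exponent $l(b;T)+1 \ge 1$ is always at least one). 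Matching each numerator factor indexed by $b$ with the single denominator factor $(1+q^2)$ indexed by the same $b$ gives the factorization
$$ \qm_q(T) = q^{\dg(T)-o(Y)} \prod_{b \in C_-(Y)} \frac{1-(-q^2)^{l(b;T)+1}}{1+q^2} = q^{\dg(T)-o(Y)} \prod_{b \in C_-(Y)} \left( \sum_{r=0}^{l(b;T)}(-q^2)^r \right), $$
which is manifestly a Laurent polynomial, being a monomial times a product of honest polynomials. Finally, since $\qm_q(Y) = \sum_{T \in \ST(Y)} \qm_q(T)$ by \eqref{Eq: def of m} and $\ST(Y)$ is a finite set, $\qm_q(Y)$ is a finite sum of Laurent polynomials and hence a Laurent polynomial as well.

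I do not anticipate a serious obstacle here, as the statement is essentially a divisibility bookkeeping argument once the explicit product formulas of Theorem \ref{Thm: qF over qE} are in hand. The one point demanding a little care is the bijective matching of the $c_-(Y)$ denominator factors with the $|C_-(Y)| = c_-(Y)$ numerator factors: I must use that both products are indexed by the \emph{same} set $C_-(Y)$ so that the cancellation is term-by-term rather than merely a statement about the whole product, and I must confirm that $l(b;T) \ge 0$ for every $b \in C_-(Y)$ so that each exponent $l(b;T)+1$ is at least $1$ and the divisibility of $1-(-q^2)^{l(b;T)+1}$ by $1+q^2$ genuinely applies. Both facts follow directly from Definition \ref{Def:ltb}, where $l(b;T)$ is defined as a maximum over $k \in \Z_{\ge 0}$ and hence is a nonnegative integer. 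Alternatively, one could bypass $\qF_q$ entirely and run the identical argument using $\qE_q$ together with the identity $\qm_q(T) = \qE_q(T)/\qE_q(T^Y)$ from \eqref{Eq: qE, qF, qm}; the two routes are interchangeable, and I would present whichever yields the cleaner bookkeeping.
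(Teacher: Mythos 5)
Your proof is correct and follows essentially the same route as the paper: the paper's own argument likewise combines the explicit product formulas of Theorem \ref{Thm: qF over qE} with the divisibility of $1-(-q^2)^{l(b;T)+1}$ by $1+q^2$, and then sums over the finite set $\ST(Y)$. Your version merely makes explicit the term-by-term matching of factors over $C_-(Y)$ and the check that $l(b;T)\ge 0$, which the paper leaves implicit.
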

\begin{proof}

Since $1- (-t)^k$ is divided by $1+t$ for all $k \in \Z_{>0}$, we have
$$\dfrac{ 1-(-q^2)^{l(b;T)+1} }{1+q^2}\in \Z[q]$$
for all $b \in C_-(Y)$. Thus it follows from Theorem \ref{Thm: qF over qE} that
$ \qm_q(T)$ is a Laurent polynomial.
\end{proof}

%In the successive subsection, we will show that $\qm_q(T)$,$\qm_q(Y)$ are indeed Laurent polynomials.

%\subsection{ Closed-form expressions for $\qF_q(T)$ and $\qE_q(T)$ } \label{Subsec: closed formulas}

%We remark that Proposition \ref{Prop: initial E, F} and Lemma \ref{Lem: def+} tell that $o(Y) = \dg(T^Y)$, $d(Y) = -\cdg(T^Y)$ and
%\begin{align} \label{Eq: oY - dY}
%o(Y) - d(Y) = \df(\Lambda_0, \beta)- |\hst_Y|
%\end{align}

%Recall that $T(i,j)$ is the entry of the block $b$ located in
%$(i,j)$-position of T.

%\begin{Cor} \label{Cor: ration e and f}
%For any $T \in \ST(Y)$, $ \qm_q(T)$ is a Laurent polynomial and hence so is $\qm_q(Y)$.
%\end{Cor}
%\begin{proof}
%For any $b \in C_-(Y)$, by definition, we have
%$$\dfrac{\langle l(b;T)+1\rangle}{1+q^2}\in \Z[q].$$
%Thus, it follows from Proposition \ref{Prop: initial E, F} and Theorem \ref{Thm: qF over qE} that
%$ \qm_q(T)$ is a Laurent polynomial.
%\end{proof}

%Thus we have
%$$   \dfrac{\qF_q(T)}{\qF_q(T^{Y})}
%= q^{\dg(T/T^Y)}\prod_{b \in C_-(Y)}[\ell_T(b)+1]^-_\tyx \in
%\Z[q,q^{-1}].$$ Here $[k]^-_i :=\dfrac{1-(-q_i^2)^k}{1-(-q_i^2)}=
%q_i^{k-1}\dfrac{(-q_i)^k-q_i^{-k}}{(-q_i)-q_i^{-1}} \in
%\Z[q,q^{-1}]$ for $k \in \Z_{\ge 0}$ and $i \in I$. Moreover, we can
%write
%\begin{align*}
%\qF_q(T^Y) \qm_q(Y) &=  \qF_q(T^Y) \sum_{T \in \ST(Y)} \big(
%q^{\dg(T/T^Y)} \prod_{b \in C_-(Y)}[\ell_T(b)+1]^-_\tyx \big) \\
%&= q^{o(Y)-c_-(Y)} (1+q^2)^{c_-(Y)} \sum_{T \in \ST(Y)} \big(
%q^{\dg(T/T^Y)} \prod_{b \in C_-(Y)}[\ell_T(b)+1]^-_\tyx \big).
%\end{align*}

\vskip 1em

\section{Graded dimension formulas } \label{Sec: dimension formula}

%In this section, we present graded dimension formulas for $\fqh(\beta)$ described in terms of the Laurent polynomials associated with proper Young walls and their standard tableaux
%given in Section \ref{Sec: Laurent poly}. We prove the graded dimension formulas (Theorem \ref{Thm: q-dim formulas}) by considering the Fock space representations in Section \ref{Sec: Sec Fock space} with the categorification %theorem.
%Our graded dimension formula is a graded version of the dimension formula in \cite{AP12, AP13} described in terms of standard tableaux of shifted Young diagrams.
%Unlike the dimension formulas in \cite{AP12, AP13}, in our graded formulas, the standard tableaux of proper Young walls are used crucially instead of standard tableaux of shifted Young diagrams.
%When evaluating at $q=1$, the graded dimension formulas recover the dimension formula in \cite{AP12, AP13}, which is discussed in the last section.

\subsection{Graded dimension formulas} \label{Sec: graded dim fomula2}
Recall the Fock space representation $\mathcal{F}(\Lambda_0)$ of
type $\tyX$ given in Section \ref{Sec: Fock}.
%\ber
%The Fock
%space representation $\mathcal{F}(\Lambda_0)$ and Proposition
%\ref{Prop: graded dim} are key ingredients for a graded dimension
%formula for $\fqH(\beta)$.
%\er

\begin{defn}
For a standard tableau $T \in \ST(Y)$, we define the {\it residue
sequence} of $T$ by
$$ \res(T) = (\res_1(T), \res_2(T), \ldots, \res_n(T)) \in I^n, $$
where $\res_k(T)$ is the residue of the block with entry $k$ in $T$.
\end{defn}

For $\nu \in I^n$, we set
$$ \ST(Y, \nu) = \{ T\in \ST(Y) \mid \res(T) = \nu  \} . $$
The action $\eqref{Eq: action of f, e on Fock space}$ of Chevalley
generators $f_i, e_i$ of the Fock space $\mathcal{F}(\Lambda_0)$
gives the following lemma.

\begin{Lem} \label{Lem: graded dim}
Let $Y \in \mathcal{Z}(\Lambda_0)$ be a proper Young wall with $n$ blocks and let $\nu = (\nu_1, \nu_2, \ldots, \nu_n ) \in I^\beta $.
Then we obtain
\begin{align*}
e_{\nu_1} e_{\nu_2} \ldots e_{\nu_n} Y &= \left( \sum_{T \in \ST(Y, \nu)} \qE_q(T) \right) \emptyset, \\
f_{\nu_n} f_{\nu_{n-1}} \ldots f_{\nu_1} \emptyset &= \sum_{Y' \in \mathcal{Z}(\Lambda_0),\ \wt(Y') = \Lambda_0 - \beta} \left( \sum_{T \in \ST(Y', \nu)} \qF_q(T) \right) Y',
\end{align*}
where $\qE_q(T)$ and $\qF_q(T)$ are the Laurent polynomials defined by $\eqref{Eq: Laurent poly}$.
\end{Lem}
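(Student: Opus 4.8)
The plan is to prove both formulas by induction on $n = |Y|$, directly unwinding the definitions of the Chevalley generator actions $\eqref{Eq: action of f, e on Fock space}$ against the inductive definitions $\eqref{Eq: Laurent poly}$ of $\qE_q(T)$ and $\qF_q(T)$. The key observation is that the recursive structure of a standard tableau --- namely that removing the block with the largest entry $n$ yields a standard tableau $T_{<n}$ of a smaller proper Young wall --- matches exactly the recursive structure built into both the operator actions and the Laurent polynomials.

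For the first identity, I would apply $e_{\nu_1}$ last: writing $e_{\nu_2}\cdots e_{\nu_n} Y$ by induction as a sum over proper Young walls, I would then apply $e_{\nu_1}$ using $\eqref{Eq: action of f, e on Fock space}$, where the sum runs over removable $\nu_1$-blocks. The crucial bookkeeping step is to match each removable $\nu_1$-block $b'$ of an intermediate wall $Y'$ with the requirement that $b'$ carry the \emph{smallest} remaining entry in a standard tableau of shape $Y'$; equivalently, $b'$ must be removable so that the resulting sequence stays standard (Proposition \ref{Prop: standard}). Summing the coefficient $q_{\nu_1}^{-R_{\nu_1}(b';Y')}\left(\frac{1-(-q^2)^{l(b';Y')}}{q}\right)^{\delta_+(b';Y')}$ against the inductively-computed $\qE_q$ of the remaining tableau reproduces exactly the recursion in $\eqref{Eq: Laurent poly}$ for $\qE_q(T)$, where $T$ is the standard tableau obtained by appending $b'$ with entry $1$ (after shifting). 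Collecting terms over all such $b'$ gives the sum over $T \in \ST(Y,\nu)$, and since every generator strips one block, only the empty wall survives with a nonzero scalar coefficient.

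For the second identity the argument is dual: I would apply $f_{\nu_1}$ first to $\emptyset$ and then iterate, using that $f_i$ sums over \emph{admissible} $i$-slots. Each application adds a block, and adding the block with entry $k$ at an admissible slot corresponds precisely to extending a standard tableau $T_{<k}$ to $T_{\le k}$; the scalar $q_i^{L_i(b;Y^-)}\left(\frac{1-(-q^2)^{l(b;Y^-)}}{q}\right)^{\delta_-(b;Y^-)}$ attached by $f_i$ is exactly the factor relating $\qF_q(T_{<k})$ to $\qF_q(T_{\le k})$ in $\eqref{Eq: Laurent poly}$. The only subtlety is that here the final shape $Y'$ is not fixed in advance, which is why the outer sum ranges over all proper Young walls $Y'$ with $\wt(Y') = \Lambda_0 - \beta$; each monomial in the expansion of $f_{\nu_n}\cdots f_{\nu_1}\emptyset$ lands in some such $Y'$ and contributes $\qF_q(T)$ for the corresponding standard tableau $T \in \ST(Y',\nu)$.

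The main obstacle I expect is the careful matching of the quantities $L_i$, $R_i$, $l(b;Y)$, and $\delta_\pm$ appearing in the operator actions with those appearing in the definitions of $\qE_q$ and $\qF_q$. These depend on the ambient wall at the moment the block is added or removed (e.g. $L_i(b;Y^-)$ versus $L_i(b;Y)$), so one must verify that the intermediate wall in the operator computation coincides with the shape $\sh(T_{<k})$ or $\sh(T_{<n})$ used in $\eqref{Eq: Laurent poly}$. Once this identification is made --- and the remark following $\eqref{Eq: action of f, e on Fock space}$ that $L_i(b;Y)=L_i(b;Y^-)$ helps here --- the rest is a direct comparison of recursions, so the proof reduces to a routine but attentive induction.
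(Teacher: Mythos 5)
Your proposal is correct and is essentially the argument the paper leaves implicit (the paper offers no proof, simply asserting that the action \eqref{Eq: action of f, e on Fock space} ``gives'' the lemma): the whole content is your matching of the coefficient attached by each Chevalley generator to the corresponding factor in the recursion \eqref{Eq: Laurent poly}, using that the block removed or added by $e_{\nu_k}$, $f_{\nu_k}$ is the one carrying entry $k$ and that both quantities are computed in the same intermediate wall. One organizational caveat: for the first identity the induction closes cleanly only if you peel off $e_{\nu_n}$ (applied first, removing the entry-$n$ block, whose coefficient is precisely the top factor in the recursion for $\qE_q(T)$), since as written $e_{\nu_2}\cdots e_{\nu_n}Y$ applies $n-1$ operators to an $n$-block wall and so is not an instance of the induction hypothesis; peeling $e_{\nu_1}$ last, as you propose, would require strengthening the statement to partial products whose coefficients are indexed by skew chains of Young walls.
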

%\begin{proof}
%We use induction on $n$. If $n=0$, then there is nothing to prove. We assume that $n>0$ and the assertion holds for $|\beta|<n$.
%Let
%$  i = \nu_n,\qquad   \beta^- = \beta - \alpha_{\nu_n}, \qquad  \nu^- = (\nu_1, \ldots, \nu_{n-1}). $$
%By $\eqref{Eq: action of f, e on Fock space}$ and $\eqref{Eq:
%Laurent poly}$, we obtain
%\begin{align*}
%e_{\nu_1} e_{\nu_2} \ldots e_{\nu_n} Y &= e_{\nu_1} e_{\nu_2} \ldots e_{\nu_{n-1}} \sum_{b'} q_i^{-R_i(b';Y)} \left( \frac{ 1-(-q^2)^{l(b';Y)} }{q} \right)^{\delta_+(b';Y)} (Y \nearrow b') \\
%&= \left( \sum_{b'} \sum_{T' \in \ST(Y \nearrow b', \nu^-)} q_i^{-R_i(b';Y)} \left( \frac{ 1-(-q^2)^{l(b';Y)} }{q} \right)^{\delta_+(b';Y)} \qE_q(T') \right) \emptyset, \\
%&= \left( \sum_{T \in \ST(Y, \nu)} \qE_q(T) \right) \emptyset,
%\end{align*}
%where $b'$ runs over all removable $i$-blocks in $Y$.
%For the same reason, we obtain
%\begin{align*}
%&f_{\nu_n} f_{\nu_{n-1}} \ldots f_{\nu_1} \emptyset = f_{\nu_n} \left( \sum_{\wt(Y') = \Lambda_0 - \beta^-} \left( \sum_{T' \in \ST(Y', \nu^-)} \qF_q(T') \right) Y' \right)\\
%& \qquad  \quad = \sum_{\wt(Y') = \Lambda_0 - \beta^-} \left( \sum_{T' \in \ST(Y',\nu^-)} \sum_{b}    q_i^{L_i(b;Y')} \left( \frac{ 1-(-q^2)^{l(b;Y')} }{q} \right)^{\delta_-(b;Y')} \qF_q(T')  \right) (Y' \swarrow b) \\
% &\qquad  \quad = \sum_{\wt(Y') = \Lambda_0 - \beta} \left( \sum_{T \in \ST(Y', \nu)} \qF_q(T) \right) Y',
%\end{align*}
%where $b$ runs over all admissible $i$-slots in $Y'$.
%\end{proof}

We are now ready to prove our main theorem, graded dimension formulas for $\fqH(\beta)$.
\begin{Thm} \label{Thm: q-dim formulas}
Let $\beta \in \rlQ^+$. For $\nu, \nu' \in I^\beta$, we have
\begin{align*}
\dim_q e(\nu) \fqH(\beta) e(\nu') &= q^{\df (\Lambda_0, \beta)} \sum_{Y\in \mathcal{Z}(\Lambda_0)} \sum_{T\in \ST(Y, \nu),\ T' \in \ST(Y, \nu')} \qE_q(T) \qF_q(T'),\\
&=  \sum_{Y\in \mathcal{Z}(\Lambda_0)}
  \langle \hst_Y \rangle \qF_q(T^Y) ^2  \sum_{T\in \ST(Y, \nu),\ T' \in
\ST(Y, \nu')} \qm_q(T) \qm_q(T') .
\end{align*}
Moreover, we obtain
\begin{align*}
\dim_q  \fqH(\beta) &=  \sum_{Y\in \mathcal{Z}(\Lambda_0) ,\ \wt(Y) = \Lambda_0 - \beta }
 \langle \hst_Y \rangle \qF_q(T^Y) ^2 \   \qm_q(Y)^2.
%&= q^{\df (\Lambda, \beta)} \sum_{Y\in
%\mathcal{Z}(\Lambda_0),\ \wt(Y) = \Lambda_0 - \beta} \qE_q(T^Y)
%\qF_q(T^Y) \qm_q(Y)^2,\\
\end{align*}
where $ \hst_Y $ is given as $\eqref{Eq: hst}$, and $\qE_q(T)$, $\qF_q(T)$ and $\qm_q(Y)$ are the Laurent polynomials defined in Section \ref{Sec: Laurent poly}.
\end{Thm}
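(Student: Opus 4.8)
The plan is to combine the categorification identity of Proposition \ref{Prop: graded dim}(1) with the explicit Fock-space computation of Lemma \ref{Lem: graded dim}. Under the isomorphism of Theorem \ref{Thm: Fock space}, the highest weight vector $v_{\Lambda_0}$ of $V(\Lambda_0)$ is identified with the empty Young wall $\emptyset$, and the embedding $V(\Lambda_0)\hookrightarrow\mathcal{F}(\Lambda_0)$ is a $U_q(\tyX)$-module map, so the coefficient of $\emptyset$ in any expression $e_{\nu_1}\cdots e_{\nu_n}f_{\nu'_n}\cdots f_{\nu'_1}\emptyset$ may be computed inside the full Fock space $\mathcal{F}(\Lambda_0)$. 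With this reading, Proposition \ref{Prop: graded dim}(1) becomes
\[ e_{\nu_1}\cdots e_{\nu_n}\, f_{\nu'_n}\cdots f_{\nu'_1}\,\emptyset = q^{-\df(\Lambda_0,\beta)}\left(\dim_q e(\nu)\fqH(\beta)e(\nu')\right)\emptyset. \]

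First I would evaluate the left-hand side directly by applying Lemma \ref{Lem: graded dim} twice. Expanding $f_{\nu'_n}\cdots f_{\nu'_1}\emptyset=\sum_{Y}\big(\sum_{T'\in\ST(Y,\nu')}\qF_q(T')\big)Y$ and then contracting each term via $e_{\nu_1}\cdots e_{\nu_n}Y=\big(\sum_{T\in\ST(Y,\nu)}\qE_q(T)\big)\emptyset$ yields the coefficient $\sum_{Y}\sum_{T\in\ST(Y,\nu),\,T'\in\ST(Y,\nu')}\qE_q(T)\qF_q(T')$ of $\emptyset$; only walls with $\wt(Y)=\Lambda_0-\beta$ contribute, since otherwise $\ST(Y,\nu)$ is empty. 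Comparing with the displayed identity and clearing the factor $q^{-\df(\Lambda_0,\beta)}$ gives the first formula.

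For the second formula I would substitute Corollary \ref{Cor: ratio E and F}, which for $\wt(Y)=\Lambda_0-\beta$ reads $\qE_q(T)=q^{-\df(\Lambda_0,\beta)}\langle\hst_Y\rangle\qF_q(T)$; the prefactor $q^{\df(\Lambda_0,\beta)}$ then cancels, leaving $\sum_Y\langle\hst_Y\rangle\sum_{T,T'}\qF_q(T)\qF_q(T')$. Writing $\qF_q(T)=\qF_q(T^Y)\qm_q(T)$ from the definition of $\qm_q$ in \eqref{Eq: qE, qF, qm} pulls out $\qF_q(T^Y)^2$ and produces the product of $\qm_q$'s, which is exactly the second formula. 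For the graded dimension of the whole algebra I would sum the second formula over all $\nu,\nu'\in I^\beta$, using $\fqH(\beta)=\bigoplus_{\nu,\nu'\in I^\beta}e(\nu)\fqH(\beta)e(\nu')$. The crucial reindexing is that each $T\in\ST(Y)$ has a single residue sequence $\res(T)$, so summing first over $\nu\in I^\beta$ and then over $\ST(Y,\nu)$ reassembles the full set $\ST(Y)$; hence $\sum_{T,T'\in\ST(Y)}\qm_q(T)\qm_q(T')=\big(\sum_{T\in\ST(Y)}\qm_q(T)\big)^2=\qm_q(Y)^2$ by \eqref{Eq: def of m}, giving the third formula.

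Since all the ingredients are available, the argument is largely bookkeeping, and the two steps demanding care are the following. First, the opening identification must be justified carefully: one needs $v_{\Lambda_0}=\emptyset$ together with the fact that the submodule embedding intertwines the $e_i,f_i$ actions, so that the scalar extracted from Lemma \ref{Lem: graded dim} in $\mathcal{F}(\Lambda_0)$ is genuinely the one appearing in Proposition \ref{Prop: graded dim}(1). Second, the passage from the second to the third formula hinges on the residue-graded sums collapsing into the single unrestricted sum defining $\qm_q(Y)$, so that the bilinear double sum factors as a perfect square; keeping the weight constraint $\wt(Y)=\Lambda_0-\beta$ consistent throughout is what makes the $q^{\pm\df(\Lambda_0,\beta)}$ factors cancel cleanly.
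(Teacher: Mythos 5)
Your proposal is correct and follows essentially the same route as the paper: Proposition \ref{Prop: graded dim}(1) combined with Theorem \ref{Thm: Fock space} to work inside $\mathcal{F}(\Lambda_0)$, Lemma \ref{Lem: graded dim} applied twice for the first formula, Corollary \ref{Cor: ratio E and F} plus $\qF_q(T)=\qF_q(T^Y)\qm_q(T)$ for the second, and summation over $\nu,\nu'\in I^\beta$ with the residue-sequence reindexing for the third. The only cosmetic difference is that the paper sums the $\qE_q\qF_q$ form over $\nu,\nu'$ and then invokes \eqref{Eq: mq}, whereas you sum the already-factored second formula; these are the same computation.
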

\begin{proof}
Let
$$ \nu = (\nu_1, \nu_2, \ldots, \nu_n),\quad \nu' = (\nu'_1, \nu'_2, \ldots, \nu'_n).  $$
By Theorem \ref{Thm: Fock space}, the submodule of
$\mathcal{F}(\Lambda_0)$ generated by $\emptyset$ is isomorphic to
the irreducible highest weight module $V(\Lambda_0)$. Thus, it
follows from Proposition \ref{Prop: graded dim}, Corollary \ref{Cor: ratio E and F} and Lemma \ref{Lem: graded dim} that
\begin{align*}
\left( \dim_q e(\nu) \fqH(\beta) e(\nu') \right) \emptyset &= q^{\df(\Lambda_0,\beta)} e_{\nu_1} \cdots e_{\nu_n} f_{\nu'_n} \cdots f_{\nu'_1} \emptyset \\
&=  q^{\df(\Lambda_0, \beta)} e_{\nu_1} \cdots e_{\nu_n} \sum_{Y\in \mathcal{Z}(\Lambda_0)} \left( \sum_{T' \in \ST(Y, \nu')} \qF_q(T') \right) Y \\
&= q^{\df (\Lambda_0, \beta)} \sum_{Y\in \mathcal{Z}(\Lambda_0)}
\sum_{ \substack{T\in \ST(Y, \nu),\\ T' \in \ST(Y, \nu')}} \qE_q(T)
\qF_q(T') \
\emptyset \\
&=  \sum_{Y\in \mathcal{Z}(\Lambda_0)} \langle \hst_Y \rangle   \sum_{\substack{T\in \ST(Y, \nu),\\ T' \in \ST(Y, \nu')}}
\qF_q(T) \qF_q(T') \ \emptyset, \\
&=  \sum_{Y\in \mathcal{Z}(\Lambda_0)}
  \langle \hst_Y \rangle \qF_q(T^Y) ^2   \sum_{T\in \ST(Y, \nu),\ T' \in
\ST(Y, \nu')} \qm_q(T) \qm_q(T') \ \emptyset,
\end{align*}
which yields the first assertion.

On the other hand, it follows from $\eqref{Eq: mq}$ and $ \fqH(\beta) = \bigoplus_{\nu, \nu' \in I^\beta} e(\nu) \fqH(\beta) e(\nu') $ that
\begin{align*}
\dim_q \fqH(\beta) &= \dim_q \bigoplus_{\nu, \nu' \in I^\beta}
e(\nu) \fqH(\beta) e(\nu')  = q^{\df (\Lambda_0, \beta)} \sum_{\nu,
\nu' \in I^\beta} \sum_{Y\in \mathcal{Z}(\Lambda_0)}
\sum_{ \substack{ T\in \ST(Y, \nu), \\ \ T' \in \ST(Y, \nu')}} \qE_q(T) \qF_q(T') \\
&= q^{\df (\Lambda_0, \beta)} \sum_{Y\in \mathcal{Z}(\Lambda_0),\ \wt(Y) = \Lambda_0 - \beta} \left( \sum_{T\in \ST(Y)} \qE_q(T) \right) \left( \sum_{T' \in \ST(Y)}  \qF_q(T') \right)
\allowdisplaybreaks \\
&= q^{\df (\Lambda_0, \beta)} \sum_{Y\in \mathcal{Z}(\Lambda_0),\
\wt(Y) = \Lambda_0 - \beta} \qE_q(T^Y)  \qF_q(T^Y) \qm_q(Y)^2 \\
&=  \sum_{Y\in \mathcal{Z}(\Lambda_0) ,\ \wt(Y) = \Lambda_0 - \beta }
  \langle \hst_Y \rangle \qF_q(T^Y)^2  \   \qm_q(Y)^2,
\end{align*}
which complete the proof.
\end{proof}

\subsection{Forgetting the grading }
In this subsection, we study the dimensions for $e(\nu) \fqH(\beta)
e(\nu')$ and  $ \fqH(\beta)$ which were investigated in
\cite{AP12,AP13}.

\medskip

Recall the subset $\ST_\infty(Y) \subset \ST(Y)$ defined by $\eqref{Eq: def of STinf}$.
%Note that $\ST_\infty(Y) = \emptyset $ if the associated partition $\lambda_Y$ of $Y$ is not strict, and
%if $\lambda_Y$ is strict, then there is a 1-1 correspondence between $\ST_\infty(Y)$ and the set of standard tableaux of shape $\lambda_Y$.
Let
$$ \ST_\infty(Y, \nu) = \{ T\in \ST_\infty(Y) \mid \res(T) = \nu  \}. $$
The lemmas below follow immediately from the definition of $\ST_\infty(Y)$ and Theorem \ref{Thm: qF over qE}.

%\begin{Lem} \label{Lem: ST inf}
%For every $T \in \ST(Y) \setminus \ST_\infty(Y)$, there
%exist blocks $b \in C_-(Y)$ and $b' \in C_+(Y)$ such that $l(b;T)>0$ and $ l(b';T)>0$.
%\end{Lem}

\begin{Lem} \label{Lem: ST inf}
For every $T \in \ST(Y) \setminus \ST_\infty(Y)$, there exist a
block $b \in C_+(Y)$ with $l(b;T)>0$. Moreover, if $\lambda_T$ is a
strict partition, there exist also a block $b' \in C_-(Y)$ with
$l(b';T)>0$.
\end{Lem}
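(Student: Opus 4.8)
The plan is to prove Lemma~\ref{Lem: ST inf} by directly confronting the defining inequalities of $\ST_\infty(Y)$ in $\eqref{Eq: def of STinf}$ against those of a general standard tableau from Proposition~\ref{Prop: standard}. Recall that $T \in \ST(Y)$ always satisfies $T(i,j) < T(i,j+1)$ and $T(i,j) < T(i+1,j-\delta(i,j))$, whereas membership in $\ST_\infty(Y)$ demands the stronger column-shift condition $T(i,j) < T(i+1,j-1)$ for all possible $i,j$. First I would observe that the two sets of inequalities differ \emph{only} at positions where $\delta(i,j) = 0$, that is, at blocks $b$ with $\res(b) = \res_-(b)$; these are precisely the blocks in $C_-(Y)$. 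So if $T \in \ST(Y) \setminus \ST_\infty(Y)$, there must be some position $(i,j)$ with $\delta(i,j)=0$ where the $\ST_\infty$-inequality $T(i,j) < T(i+1,j-1)$ fails, i.e. $T(i,j) > T(i+1,j-1)$ (equality cannot occur since entries are distinct).

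The subtle point is that the lemma asserts the existence of a block $b \in C_+(Y)$ with $l(b;T) > 0$, not a block in $C_-(Y)$. So after locating the failure of the $\ST_\infty$-condition at a $C_-$-block, I would translate it into a statement about a $C_+$-block via the definitions in Definition~\ref{Def:ltb}. The natural candidate is the block $b' := T(i+1,j-1)$ sitting at position $(i+1,j-1)$: since $\res(b) = \res_-(b)$ forces a matching-residue configuration in the adjacent column, I expect $b'$ (or a nearby block in column $i+1$) to lie in $C_+(Y)$, and the failed inequality $T(i+1,j-1) < T(i,j)$ is exactly the condition, read off from the $C_+$-definition $l(b';T) = \max\{k : T(i'-k, j'+1) > T(i',j')\}$, that makes $l(b';T) \ge 1 > 0$. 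The main obstacle will be bookkeeping the precise column/row indices and residue relations so that the block witnessing the failure on the $C_-$ side is correctly reinterpreted as a $C_+$-block with strictly positive $l$-value; here the geometry of the block pattern (which residues sit directly above/below which) must be used carefully, together with $\eqref{Eq: value of di}$ to confirm $\delta_+(b';Y)=1$.

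For the second assertion, assume additionally that $\lambda_T$ (equivalently $\lambda_Y$) is a strict partition, so that $\ST_\infty(Y)$ is nonempty and corresponds to standard tableaux of the shifted shape $\lambda_Y$. Here I would argue symmetrically: the failure of $T$ to lie in $\ST_\infty(Y)$ again originates at a position where $\delta(i,j)=0$, and under the strictness hypothesis the relevant adjacency also produces a genuine $C_-(Y)$-block $b''$ for which the refined count $l(b'';T)$ (given by the $C_-$-formula $\max\{k : T(i''+k,j''-1) < T(i'',j'')\}$) must exceed $0$. The strictness of $\lambda_Y$ is what guarantees that such a $C_-$-block actually exists in $Y$ in the first place (columns of equal height can only occur in the half-blocks, forcing the $\res = \res_-$ configurations to appear), whereas without strictness $C_-(Y)$ could in principle be arranged so that no violation surfaces on that side. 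I would keep this step short, noting that it is the mirror image of the first argument with the roles of ``above'' and ``below'' (and of $\delta_+$ versus $\delta_-$) interchanged, so the same index bookkeeping applies \emph{mutatis mutandis}.

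Throughout, the computations are elementary comparisons of entries, so the proof should be compact; the genuinely delicate part is the correspondence between a violated inequality at a $C_-$-position and a positive $l$-value at an associated $C_+$-block, which rests entirely on the fixed residue pattern of Young walls of type $A^{(2)}_{2\ell}$ and $D^{(2)}_{\ell+1}$ and on the explicit formula for $l(b;T)$ in Definition~\ref{Def:ltb}.
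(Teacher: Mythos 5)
Your argument for the strict case is essentially right, and it is the same mechanism the paper has in mind: a violation of $\eqref{Eq: def of STinf}$ can only occur at a position $(i,j)$ with $\delta(i,j)=0$ (by Proposition \ref{Prop: standard}), the block at $(i,j)$ is then itself the desired $C_-(Y)$-block $b'$ with $l(b';T)\ge 1$, and since every column carries the same residue pattern, the block at $(i+1,j-1)$ lies in $C_+(Y)$ and the violated inequality $T(i+1,j-1)<T(i,j)$ is literally the $k=1$ condition making $l>0$ there. (Your hedge ``or a nearby block'' is unnecessary: $\res(b)=\res_-(b)$ at row $j$ forces $\res=\res_+$ at row $j-1$ in \emph{every} column, and $j-1>1$ because same-residue pairs only occur at rows $\ge h$.)

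The genuine gap is the case where $\lambda_Y$ is \emph{not} strict. Then $\ST_\infty(Y)=\emptyset$ by fiat, so $\ST(Y)\setminus\ST_\infty(Y)=\ST(Y)$, and your opening claim --- ``there must be some position where the $\ST_\infty$-inequality fails'' --- is false: the canonical tableau $T^Y$ satisfies $T^Y(i,j)<T^Y(i+1,j-1)$ everywhere (all entries of column $i+1$ exceed all entries of column $i$), yet the lemma still asserts a $C_+(Y)$-block with $l(b;T^Y)>0$. This case needs a different, tableau-independent argument: since $\lambda_Y$ is $h$-strict but not strict, there are adjacent columns $y_{p+1},y_p$ of equal nonzero height divisible by $h$; by $\eqref{Eq: h1}$ the top block $b$ of $y_{p+1}$ lies in $C_+(Y)$, and because column $p$ has the same height, the position diagonally up-right of $b$ is empty, so $T(p,j+1)=\infty>T(p+1,j)$ and $l(b;T)>0$ for \emph{every} $T\in\ST(Y)$. (This is also visible in Theorem \ref{Thm: qF over qE}(2): $l(b;T^Y)=l_R(b;Y)-1\ge 1$ when two columns share a height.) Relatedly, your explanation of why strictness is needed for the second claim is off: it is not that strictness ``guarantees $C_-$-blocks exist,'' but that without strictness no inequality need be violated at all, and indeed $T^Y$ then has $l(b';T^Y)=0$ for all $b'\in C_-(Y)$, so the $C_-$ half of the lemma genuinely fails there --- which is exactly why the hypothesis is present.
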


\begin{Lem} \label{Lem: evaluation}
For a standard tableau $T \in \ST(Y)$, we have the following:
\begin{enumerate}
\item $\qE_q(T)|_{q=1} = \left\{
                           \begin{array}{ll}
                             2^{c_+(Y)} & \hbox{ if there is no $b \in C_+(Y)$ with $l(b;T)>0$} , \\
                             0 & \hbox{ otherwise.}
                           \end{array}
                         \right.
$
\item $\qF_q(T)|_{q=1} = \left\{
                           \begin{array}{ll}
                             2^{c_-(Y)} & \hbox{ if there is no $b \in C_-(Y)$ with $l(b;T)>0$}, \\
                             0 & \hbox{ otherwise.}
                           \end{array}
                         \right.
$
\end{enumerate}
\end{Lem}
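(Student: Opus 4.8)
The plan is to read both evaluations off the closed product formulas of Theorem~\ref{Thm: qF over qE}(1),
$$
\qE_q(T)=q^{-\cdg(T)-c_+(Y)}\prod_{b\in C_+(Y)}\bigl(1-(-q^2)^{l(b;T)+1}\bigr),\qquad
\qF_q(T)=q^{\dg(T)-c_-(Y)}\prod_{b\in C_-(Y)}\bigl(1-(-q^2)^{l(b;T)+1}\bigr),
$$
and then to track the parity of each factor under the specialization $q=1$. First I would note that at $q=1$ every monomial prefactor $q^{-\cdg(T)-c_+(Y)}$ (resp.\ $q^{\dg(T)-c_-(Y)}$) becomes $1$, so only the products survive. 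Each factor specializes to $1-(-1)^{l(b;T)+1}$, which equals $2$ when $l(b;T)$ is even and $0$ when $l(b;T)$ is odd. Hence, if no $b\in C_+(Y)$ has $l(b;T)>0$, then each of the $c_+(Y)$ factors contributes a $2$ and $\qE_q(T)|_{q=1}=2^{c_+(Y)}$; replacing $C_+(Y)$ by $C_-(Y)$ gives $\qF_q(T)|_{q=1}=2^{c_-(Y)}$ under the analogous hypothesis. This settles the first alternative of each part.

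It then remains to prove that the products vanish at $q=1$ whenever some block has $l(b;T)>0$, and this is the one place where the combinatorics of Definition~\ref{Def:ltb} is genuinely used. Since the product factorizes over $C_+(Y)$, by the previous paragraph it suffices to produce, from a single $b\in C_+(Y)$ with $l(b;T)>0$, some $b'\in C_+(Y)$ with $l(b';T)$ odd (and symmetrically for $C_-(Y)$). I would establish this by examining a maximal run of equal-height columns of $Y$ whose tops carry the repeated-residue configuration defining $C_+(Y)$: the corresponding blocks lie in a common row, and by reading their entries together with those in the row immediately above and invoking the standardness inequalities $T(i,j)<T(i,j+1)$ and $T(i,j)<T(i+1,j-\delta(i,j))$ of Proposition~\ref{Prop: standard}, the values $l(\,\cdot\,;T)$ attained along such a run should form a down-closed set of nonnegative integers. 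As soon as one of them is positive, the value $1$ must then occur, so the factor $1-(-q^2)^{2}$ appears and the product vanishes at $q=1$.

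I expect this last interval/chain property of the values $l(b;T)$ along a run of equal-height columns to be the main obstacle. For the canonical tableau Theorem~\ref{Thm: qF over qE}(2) hands us these values explicitly (they are $0,1,\dots,g-1$ for a run of length $g$), but for a general standard tableau the entries in the row above have been permuted, so one must argue carefully from the defining inequalities that no parity gap can open up; a convenient bookkeeping is to move along the run by the adjacent transpositions supplied by Theorem~\ref{Thm: weak order} and to control the effect on each $l(b;T)$ as in Proposition~\ref{Prop: ratio for E, F}. Once the down-closedness is in hand, the two ``$0$ otherwise'' cases follow at once, and the statement is seen to match exactly the membership criterion for $\ST_\infty(Y)$ recorded in Lemma~\ref{Lem: ST inf}; this is precisely why the nonvanishing locus of $\qE_q(\,\cdot\,)|_{q=1}$ (resp.\ $\qF_q(\,\cdot\,)|_{q=1}$) is $\ST_\infty(Y)$.
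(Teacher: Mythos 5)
Your opening reduction is exactly the one the paper intends (the paper's entire ``proof'' is the one-line remark that the lemma follows from Theorem \ref{Thm: qF over qE} and the definition of $\ST_\infty(Y)$): at $q=1$ the monomial prefactors become $1$ and each factor $1-(-q^2)^{l(b;T)+1}$ becomes $2$ if $l(b;T)$ is even and $0$ if $l(b;T)$ is odd. You are also right, and more careful than the paper, in observing that this alone does \emph{not} give the statement: the product vanishes at $q=1$ exactly when some $l(b;T)$ is \emph{odd}, while the lemma asserts vanishing whenever some $l(b;T)$ is \emph{positive}, so one needs the implication ``some $l(b;T)>0$ $\Rightarrow$ some $l(b';T)$ odd.'' The genuine gap in your proposal is that this implication is never proved: the ``down-closedness of the attained values'' is only asserted (``should form'', ``I expect''), and the bookkeeping you suggest via Theorem \ref{Thm: weak order} and Proposition \ref{Prop: ratio for E, F} is problematic, since the relevant ratios $\bigl(1-(-q^2)^{l+1}\bigr)/\bigl(1-(-q^2)^{l}\bigr)$ degenerate to $0/2$ or $2/0$ at $q=1$ and so cannot be specialized factor by factor. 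A smaller inaccuracy: the family one must examine is \emph{all} blocks of the given $C_+$-row (one in each column of height at least that row), not just the tops of a run of equal-height columns.

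The missing step can be closed directly from Proposition \ref{Prop: standard}, with no recourse to transpositions. First, in every row of a standard tableau the entries increase from right to left: if $\delta(i,j)=0$ this is the second inequality of Proposition \ref{Prop: standard}, and if $\delta(i,j)=1$ then $T(i,j)<T(i+1,j-1)<T(i+1,j)$. Now let $b\in C_+(Y)$ lie in row $j$; every block of row $j$ lies in $C_+(Y)$, and the columns meeting row $j$ are the rightmost ones, say $a=0,1,\dots,M$ from the right, with blocks $b_a$. If $l(b_a;T)=m>0$, then $T(a-m,j+1)>T(a,j)>T(a-1,j)$, so taking $k=m-1$ in Definition \ref{Def:ltb} gives $l(b_{a-1};T)\ge m-1$; moreover $l(b_0;T)=0$, since for the rightmost column only $k=0$ is available. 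Thus the sequence $l(b_a;T),l(b_{a-1};T),\dots,l(b_0;T)$ starts at $m$, ends at $0$, and drops by at most one at each step, so by the discrete intermediate value argument it attains every value $0,1,\dots,m$; in particular it attains $1$, producing a factor $1-(-q^2)^{2}$ that kills the product at $q=1$. This proves (1). Part (2) is symmetric (blocks of a $C_-$-row compared with the row below, moving to the left), with one extra case handled by properness: the row below can extend at most one column further left (such a column is full, and two equal full columns are forbidden), so the leftmost $C_-$-block of the row has $l$ equal to $0$ or $1$; in the latter case an odd value occurs outright, and in the former the same descent argument applies. With this lemma in place, your closing remark that the nonvanishing locus is exactly $\ST_\infty(Y)$ is correct and is precisely how the lemma feeds into Theorem \ref{Thm: dim formulas}.
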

%\begin{proof}
%(1)
%
%We first assume that $T \in \ST(Y) \setminus \ST_\infty(Y)$. Then,
%Lemma \ref{Lem: ST inf} tells that there exists a block $b \in
%C_+(Y)$ such that $l(b;T) \ge 1$.
%
% We first assume that there exists a block $b \in C_+(Y)$ such that $l(b;T) \ge 1$.
%Since $l(b;T) \ge 1$, $b$ should not be in the first column of $T$.
%So, we can take the block $b'$ in $T$ placed directly to the right
%of $b$. Let $k = \res(b)$ and $k' = \res(b')$. Note that $k > k'$
%and $b' \in  C_+(Y)$. Pictorially, $T_{\le k}$ looks as follows:
%$$
%\xy
%(33,9)*{};(37,9)*{} **\dir{-};
%(27,3)*{};(37,3)*{} **\dir{-};
%(6,0)*{};(37,0)*{} **\dir{-};
%(0,-3)*{};(37,-3)*{} **\dir{-};
%(-2,-9)*{};(37,-9)*{} **\dir{-};
%(33,9)*{};(33,-11)*{} **\dir{-};
%(27,3)*{};(27,-11)*{} **\dir{-};
%(18,0)*{};(18,-11)*{} **\dir{-};
%(12,0)*{};(12,-11)*{} **\dir{-};
%(6,0)*{};(6,-11)*{} **\dir{-};
%(0,-3)*{};(0,-11)*{} **\dir{-};
%(9.5,-1.6)*{ _{ k}  };
%(15.5,-1.6)*{ _{ k'}  };
%(23,-1.6)*{ _{ \cdots}  };
%\endxy
%$$
%Thus, $l(b';T)=l(b;T)-1$ and, by Theorem \ref{Thm: qF over qE}, $\qE'_q(T)$ has the factor
%$$ (1-(-q^{2})^{ l(b';T) +1})(1-(-q^{2})^{l(b';T) +2}). $$
%Therefore, it follows from Theorem \ref{Thm: qF over qE} that
%$$ \qE_q(T)|_{q=1} = \qE_q'(T)|_{q=1} = 0.$$
%
%We next assume that there exists no block $b \in C_+(Y)$ such that
%$l(b;T) \ge 1$. Thus, we have $$\qE'_q(T)= \left( \frac{1+q^{2}}{q}
%\right)^{c_+(Y)}$$ which completes our assertion.
%
%(2) the second assertion can be proved in the same manner.
%\end{proof}

We take an element $d_\tyX \in \wlP^\vee\otimes_\Z \Q $ such that
$$
\langle d_\tyX , \Lambda_0 \rangle = 0, \quad \langle d_\tyX , \alpha_i \rangle = \left\{
                                                                                    \begin{array}{ll}
                                                                                      1 & \hbox{ if } (i = 0,\ \tyX = A_{2 \ell}^{(2)}) \text{ or }  (i = 0,\ell,\ \tyX = D_{ \ell+1}^{(2)}),  \\
                                                                                      0 & \hbox{otherwise.}
                                                                                    \end{array}
                                                                                  \right.
$$
Note that, for a proper Young wall $Y$, $-\langle d_\tyX, \wt(Y)  \rangle $ is the number of half-unit height blocks in $Y$.
One can easily check that
$$ -\langle d_\tyX, \wt(Y)  \rangle - \ell(Y) = c_-(Y)+c_+(Y),$$
where $\ell(Y)$ is the number of columns in $Y$. Then, Theorem
\ref{Thm: q-dim formulas}, Lemma \ref{Lem: ST inf} and Lemma
\ref{Lem: evaluation} recover the dimension formulas given in
\cite[Thm.\ 3.4]{AP12} and \cite[Thm.\ 3.2]{AP13} by evaluating the
graded dimension formulas given in Theorem \ref{Thm: q-dim formulas}
at $q=1$.

\begin{Thm} \label{Thm: dim formulas}
For $\beta \in \rlQ^+$ and $\nu, \nu' \in I^\beta$, we have
\begin{align*}
\dim e(\nu) \fqH(\beta) e(\nu') &= \sum_{Y\in
\mathcal{Z}(\Lambda_0)} \sum_{ \substack{T\in \ST_\infty(Y, \nu),\\
T' \in \ST_\infty(Y, \nu')}} 2^{ - \langle d_\tyX, \wt(Y)  \rangle -\ell(Y)} |\ST_{\infty}(Y,\nu)||\ST_{\infty}(Y,\nu')|, \\
\dim  \fqH(\beta) &=  \sum_{Y\in \mathcal{Z}(\Lambda_0),\ \wt(Y) =
\Lambda_0 - \beta} 2^{ - \langle d_\tyX, \wt(Y)  \rangle -\ell(Y)} |\ST_{\infty}(Y)|^2.
\end{align*}
\end{Thm}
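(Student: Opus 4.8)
The plan is to derive both identities by specializing the graded dimension formulas of Theorem \ref{Thm: q-dim formulas} at $q=1$ and tracking which standard tableaux survive the evaluation. Starting from
$$\dim_q e(\nu)\fqH(\beta)e(\nu') = q^{\df(\Lambda_0,\beta)}\sum_{Y\in\mathcal{Z}(\Lambda_0)}\sum_{\substack{T\in\ST(Y,\nu)\\ T'\in\ST(Y,\nu')}}\qE_q(T)\qF_q(T'),$$
I would set $q=1$; the prefactor $q^{\df(\Lambda_0,\beta)}$ becomes $1$, so the ungraded dimension is simply the value of the double sum at $q=1$. The point is then to evaluate each factor $\qE_q(T)|_{q=1}$ and $\qF_q(T')|_{q=1}$ using Lemma \ref{Lem: evaluation}, which says these equal $2^{c_+(Y)}$ and $2^{c_-(Y)}$ respectively when no $b\in C_+(Y)$ (resp.\ $b\in C_-(Y)$) satisfies $l(b;T)>0$, and vanish otherwise.

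The key step is identifying the surviving summands with $\ST_\infty$. By the unconditional half of Lemma \ref{Lem: ST inf}, any $T\in\ST(Y)\setminus\ST_\infty(Y)$ admits a block $b\in C_+(Y)$ with $l(b;T)>0$, so $\qE_q(T)|_{q=1}=0$; conversely, for $T\in\ST_\infty(Y)$ the defining inequality $T(i,j)<T(i+1,j-1)$ applied at $(i-1,j+1)$ forces $l(b;T)=0$ for every $b\in C_+(Y)$, so $\qE_q(T)|_{q=1}=2^{c_+(Y)}$. Hence the $\qE$-factor selects exactly $\ST_\infty(Y,\nu)\subset\ST(Y,\nu)$. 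The same argument, via the $C_-(Y)$ half of Lemma \ref{Lem: ST inf}, shows that when $\lambda_Y$ is strict the $\qF$-factor selects exactly $\ST_\infty(Y,\nu')$. After this reduction each surviving pair contributes $2^{c_+(Y)}2^{c_-(Y)}$, and summing over the surviving $T,T'$ collapses to
$$\dim e(\nu)\fqH(\beta)e(\nu') = \sum_{Y\in\mathcal{Z}(\Lambda_0)} 2^{c_+(Y)+c_-(Y)}\,|\ST_\infty(Y,\nu)|\,|\ST_\infty(Y,\nu')|.$$
The stated form follows from the identity $c_+(Y)+c_-(Y) = -\langle d_\tyX,\wt(Y)\rangle-\ell(Y)$ recorded just before the theorem. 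The formula for $\dim\fqH(\beta)$ is obtained identically by summing over all $\nu,\nu'\in I^\beta$, which turns the two factors into $\big(\sum_{T\in\ST(Y)}\qE_q(T)\big)\big(\sum_{T'\in\ST(Y)}\qF_q(T')\big)|_{q=1}$ and replaces both counts by $|\ST_\infty(Y)|$.

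I expect the one genuinely delicate point to be the treatment of walls $Y$ whose associated partition $\lambda_Y$ is not strict, where $\ST_\infty(Y)=\emptyset$ by convention. For such $Y$ the $C_-$ half of Lemma \ref{Lem: ST inf} is unavailable, so one cannot argue directly that the $\qF$-factor vanishes; instead one observes that \emph{every} $T\in\ST(Y)$ then lies outside $\ST_\infty(Y)$, whence the unconditional $\qE$-factor already annihilates the entire $Y$-summand and the product vanishes for the correct reason, matching $|\ST_\infty(Y)|^2=0$ on the right. Keeping this asymmetry in mind — that the $\qE$-factor governs the strictness dichotomy while the $\qF$-factor only refines the already-strict case — is the main bookkeeping obstacle; the remaining manipulations are the routine collapse of $\sum_{T\in\ST_\infty}1$ into $|\ST_\infty|$ and the substitution of the exponent identity.
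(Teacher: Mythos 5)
Your proposal is correct and takes essentially the same route as the paper: the paper's own proof is a one-sentence appeal to exactly the ingredients you use, namely evaluating Theorem \ref{Thm: q-dim formulas} at $q=1$, combining Lemma \ref{Lem: ST inf} with Lemma \ref{Lem: evaluation} so that only $\ST_\infty$-tableaux survive, and substituting the identity $c_+(Y)+c_-(Y)=-\langle d_\tyX,\wt(Y)\rangle-\ell(Y)$. Your write-up in fact supplies details the paper leaves implicit — the converse direction that every $T\in\ST_\infty(Y)$ has $l(b;T)=0$ for all $b\in C_\pm(Y)$ (where, strictly, your single application of the defining inequality handles only the nearest column, and ruling out farther columns needs the row monotonicity $T(i,j)<T(i+1,j)$ implied by the two defining inequalities together with strictness of $\lambda_Y$), and the vanishing of the whole $Y$-summand when $\lambda_Y$ is not strict.
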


\vskip 2em

\noindent
{\bf Acknowledgments.} The authors would like to thank the anonymous reviewers for their valuable comments and suggestions.

\vskip 2em

%%%%%%%%%%%%%%%%%%%%%%%%%%%%%%%%%%%%%%%%%%%%%%%%%%%%%%%%%%%%%%%%%%%

\bibliographystyle{amsplain}

%%%%%%%%%%%%%%%%%%%%%%%%%%%%%%%%%%%%%%%%%%%%%%%%%%%%%%%%%%%%%%%%%%%

\end{document}